\newcolumntype{P}[1]{>{\centering\arraybackslash}p{#1}}
\newcolumntype{M}[1]{>{\centering\arraybackslash}m{#1}}
\newcommand\cA{{\mathcal A}}
\newcommand\cC{{\mathcal C}}
\newcommand\cD{{\mathcal D}}
\newcommand\cF{{\mathcal F}}
\newcommand\cG{{\mathcal G}}
\newcommand\cH{{\mathcal H}}
\newcommand\cI{{\mathcal I}}
\newcommand\cM{{\mathcal M}}
\newcommand\cT{{\mathcal T}}
\theoremstyle{plain}
\newtheorem{theorem}{Theorem}[section]
\newtheorem{lemma}[theorem]{Lemma}
\newtheorem{corollary}[theorem]{Corollary}
\newtheorem{conjecture}[theorem]{Conjecture}
\newtheorem{proposition}[theorem]{Proposition}
\newtheorem{observation}[theorem]{Observation}
\theoremstyle{definition}
\newtheorem{definition}[theorem]{Definition}
\newtheorem{claim}[theorem]{Claim}
\newtheorem{question}{Question}
\newtheorem*{rem}{Remark}
\newcommand\cref[1]{Corollary~\ref{cor:#1}}
\newcommand{\sat}{{\rm sat}}
\newcommand{\sats}{{\rm sat}^*}
\DeclareMathSymbol{\lsb@l}{\mathalpha}{letters}{`l}
\title{Induced and non-induced poset saturation problems}
\author{Bal\'azs Keszegh$^{1,2}$, Nathan Lemons$^3$, Ryan R. Martin$^4$,
	~~~~~~~~~~~~~~~~~~~~~~~~~D\"om\"ot\"or P\'alv\"olgyi$^2$, Bal\'azs Patk\'os$^{1,5}$\\ 
\small $^1$ Alfr\'ed R\'enyi Institute of Mathematics \\
\small $^2$ MTA-ELTE Combinatorial Geometry Research Group \\
\small $^3$ Los Alamos National Laboratory \\
\small $^4$ Iowa State University \\
\small $^5$ Lab. of Combinatorial and Geometric Structures, Moscow Inst. of Physics and Technology \\
\small $\{$keszegh,patkos@renyi.hu, nlemons@lanl.gov, dom@cs.elte.hu, rymartin@iastate.edu$\}$}
\date{}
\begin{document}

\maketitle

\begin{abstract}
	A subfamily $\cG\subseteq \cF\subseteq 2^{[n]}$ of sets is a non-induced (weak) copy of a poset $P$ in $\cF$ if there exists a bijection $i:P\rightarrow \cG$ such that $p\le_P q$ implies $i(p)\subseteq i(q)$. In the case where in addition $p\le_P q$ holds if and only if $i(p)\subseteq i(q)$, then $\cG$ is an induced (strong) copy of $P$ in $\cF$. We consider the minimum number $\sat(n,P)$ [resp.\ $\sats(n,P)$] of sets that a family $\cF\subseteq 2^{[n]}$ can have without containing a non-induced [induced] copy of $P$ and being maximal with respect to this property, i.e., the addition of any $G\in 2^{[n]}\setminus \cF$ creates a non-induced [induced] copy of $P$.

	We prove for any finite poset $P$ that $\sat(n,P)\le 2^{|P|-2}$, a bound independent of the size $n$ of the ground set. For induced copies of $P$, there is a dichotomy: for any poset $P$ either $\sats(n,P)\le K_P$ for some constant depending only on $P$ or $\sats(n,P)\ge \log_2 n$. We classify several posets according to this dichotomy, and also show better upper and lower bounds on $\sat(n,P)$ and $\sats(n,P)$ for specific classes of posets. 
	
	Our main new tool is a special ordering of the sets based on the colexicographic order. It turns out that if $P$ is given, processing the sets in this order and adding the sets greedily into our family whenever this does not ruin non-induced [induced] $P$-freeness, we tend to get a small size non-induced [induced] $P$-saturating family.
	\end{abstract}

\section{Introduction}
A subposet $Q'$ of $Q$ is a \textit{weak} or \textit{non-induced copy} of the poset $P$ in $Q$, if there exists a bijection $i:P\rightarrow Q'$ with $p\le_P p'$ implying $i(p)\le_Q i(p')$. In the case where in addition $p\le_P p'$ holds if and only if $i(p)\le_Q i(p')$, then we say that $Q'$ is a \textit{strong} or \textit{induced copy} of $P$ in $Q$. If $Q$ does not contain a non-induced [induced] copy of $P$, then we say that $Q$ is non-induced [induced] $P$-free. The extremal forbidden subposet problem asks for the maximum size of a non-induced [induced] $P$-free subposet of $Q$. To generalize results of Sperner \cite{S} and Erd\H os \cite{Erd}, this was introduced by Katona and Tarj\'an \cite{KT} in the case where $Q=B_n$ is the poset of all subsets of an $n$-element set ordered by inclusion. It is conjectured (implicitly in the work of Katona and his co-authors, explicitly by Bukh \cite{Bukh}, and Griggs and Lu \cite{GLu}) 
that the size of a maximum poset divided by ${{\binom {n} {\lfloor n/2 \rfloor}}}$ always tends to the size of a maximum number of complete and consecutive middle levels of the Boolean lattice whose union is $P$-free (in both the non-induced and induced cases), but this has been verified only in special cases; the fact that this limit is bounded follows from \cite{Erd} and \cite{MP}, respectively.
For more on this topic see the recent survey \cite{GL} and Chapter 7 of \cite{GP}.

The corresponding saturation problem asks for the minimum possible sizes, denoted $\sat(Q,P)$ $[\sats(Q,P)]$, of a non-induced [induced] $P$-free subposet of $Q$ that is maximal with respect to being $P$-free. Such subposets are said to be non-induced [induced] $P$-saturating and in case $Q=B_n$, we write $\sat(n,P)$ and $\sats(n,P)$. First, Gerbner et al.~\cite{G6} studied this problem for $P=C_k$, the chain on $k$ elements, in which case  $\sat(n,C_k)=\sats(n,C_k)$. They proved that $2^{(k-3)/2}\le\sat(n,C_{k})\le 2^{k-2}$ holds for all $n$, and showed that for $k=7$ the upper bound can be further strengthened to $\sat(n,C_7)\le 30$. This latter upper bound was generalized by Morrison, Noel, and Scott \cite{MNS}, proving $C\cdot 2^{(1-\delta)k}$ where $\delta=1-\frac{\log_2 15}{4}\approx 0.02$. Later, the induced version $\sats(n,P)$ was studied by Ferrara et al.~\cite{F7} and by Martin, Smith, and Walker \cite{MSW}.
In \cite{F7}, it was shown for a number of other posets $P$ that $\sat(n,P)$ is bounded by some constant independent of $n$, while $\sats(n,P)$ was shown to be unbounded for all these posets. Ivan \cite{I} has very recently improved lower bounds on $\sats(n,\bowtie)$ and $\sats(n,N)$ for the butterfly and the $N$-posets.

Our first main result proves that non-induced saturation numbers are always bounded by a function of $|P|$, which is a constant independent of $n$.

\begin{theorem}\label{bounded}
For any finite poset $P$, we have $\sat(n,P)\le 2^{|P|-2}$.
\end{theorem}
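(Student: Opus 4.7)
The plan is to construct a non-induced $P$-saturating family $\cF \subseteq 2^{[n]}$ of size at most $2^{|P|-2}$ via the colexicographic greedy procedure foreshadowed in the abstract. Process the sets of $2^{[n]}$ in colex order starting from $\emptyset$, maintaining an initially empty family $\cF$; add each considered set $S$ to $\cF$ if and only if $\cF \cup \{S\}$ remains non-induced $P$-free, and otherwise skip $S$. The resulting $\cF$ is automatically $P$-saturating: any skipped $S$ was skipped because of a non-induced $P$-copy created at its turn, and that copy persists to the end of the procedure, so $\cF \cup \{S\}$ still contains it.

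The nontrivial task is bounding $|\cF| \le 2^{k-2}$, where $k = |P|$. Colex order has the convenient property that its first $2^r$ elements are exactly $2^{[r]}$, and more generally, every set with maximum coordinate at most $r$ precedes every set whose maximum is $r+1$ or larger. I would exploit this to argue that the greedy's behaviour on sets outside $2^{[k-2]}$ is very restricted: after processing all of $2^{[k-2]}$, the partial family $\cF \cap 2^{[k-2]}$ is ``rich enough'' that every later set $S$ (which must contain an element of $\{k-1, \ldots, n\}$) is rejected, because $\cF \cup \{S\}$ already contains a non-induced copy of $P$. Consequently $\cF \subseteq 2^{[k-2]}$ and the size bound is immediate.

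The crux is proving this ``richness'' claim. The idea is that the partial family $\cF \cap 2^{[k-2]}$ should contain, for each element $p \in P$, a non-induced copy of $P \setminus \{p\}$ of a form allowing a fresh ``outside'' set $S$ to play the role of $p$; the new coordinate $\max(S) \ge k-1$ automatically supplies the inclusions needed, because this coordinate belongs to $S$ but to none of the sets in $\cF \cap 2^{[k-2]}$, so $S$ comparably dominates any set in $\cF \cap 2^{[k-2]}$ that it contains, and conversely.

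The main obstacle is establishing this richness claim rigorously: the greedy may have skipped some subsets of $[k-2]$, and the skipped sets could have been essential for the intended $P \setminus \{p\}$ configurations. I expect this step to require either an induction on $k$, using the inductive hypothesis to identify the structure of the partial family after the greedy finishes $2^{[k-2]}$, or a direct combinatorial argument leveraging the specific order in which colex visits subsets of $[k-2]$ and a careful choice of $p$ as a minimal or maximal element of $P$ depending on whether $S$ acts as a ``big'' or ``small'' set relative to $\cF \cap 2^{[k-2]}$. A fallback plan would be to abandon the greedy and instead give a direct explicit construction tailored to $P$, at the cost of losing the uniform elegance of the greedy approach.
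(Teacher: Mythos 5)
There is a genuine gap, and in fact the specific plan would fail. Your greedy runs over \emph{all} of $2^{[n]}$ in plain colex order, and you claim the resulting family ends up inside $2^{[k-2]}$ because the prefix family $\cF\cap 2^{[k-2]}$ is already ``rich enough'' to reject every later set. This is false already for $P=C_3$: the colex greedy adds $\emptyset$, then $\{1\}$, then $\{2\}$ (still no $3$-chain), rejects $\{1,2\}$, then adds $\{3\}$, and so on, ending with $\emptyset$ together with \emph{all} singletons --- a perfectly valid maximal $C_3$-free family, but of size $n+1$, not $2^{k-2}=2$. The underlying problem is that in plain colex the large sets come far too late, so nothing in the early family sits \emph{above} a newly considered small set; your own remark that a later set $S$ (with $\max S\ge k-1$) contains no member of $\cF\cap 2^{[k-2]}$ except possibly very small ones already shows $S$ can only play the role of a maximal element of $P$, which is not enough to force rejection. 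The paper avoids this by pairing: it enumerates the sets $F_i\subseteq[n-1]$ in colex order and at step $i$ tries to add \emph{both} $F_i$ and its complement $G_i=[n]\setminus F_i$. The complements enter the family early and provide the ``tops'', and the final family consists of (some) subsets of $[k-3]$ together with complements of such sets --- so the correct structural statement is not $\cF\subseteq 2^{[k-2]}$ but $\cF=\cF_{2^{k-3}}$, giving $|\cF|\le 2\cdot 2^{k-3}=2^{k-2}$.

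The proof mechanism is also different from your ``richness'' plan, and this matters because richness of the prefix is exactly the part you could not establish. The paper never shows that the prefix family saturates all later sets directly. Instead it proves closure properties of the greedy output (its Lemma on the colex process): if $F_i$ is in the family then so are $F_i\setminus\{\max F_i\}$ and $F_i\cup([n]\setminus[\max F_i])$, and dually for the complements $G_i$. Iterating these, any member whose defining set has maximum larger than $k-3$ yields a chain of length at least $k$ \emph{inside} $\cF$ itself; since a $k$-chain contains a non-induced copy of every $k$-element poset, this contradicts $P$-freeness of $\cF$. So the bound comes from $P$-freeness of the greedy family, not from a saturation argument about which later sets get rejected. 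If you want to rescue your approach, you should switch to the set/complement-paired colex order and replace the richness claim by these closure properties plus the chain-length contradiction.
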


Unlike in the case of the extremal forbidden subposet problem,  the proof of Theorem \ref{bounded} does not follow from the fact that $\sat(n,C_k)$ is bounded.

Note that as shown by the result $2^{(k-3)/2}\le\sat(n,C_{k})$ of \cite{G6}, an exponential rate of growth in $k$ is best possible. One might wonder which $k$-element poset is hardest to saturate.

\begin{conjecture}\label{chainisbest}
For any $k$-element poset $P$, we have $\sat(n,P)\le\sat(n,C_k)$.
\end{conjecture}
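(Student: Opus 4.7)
The plan is to attempt Conjecture~\ref{chainisbest} by directly transforming a minimum non-induced $C_k$-saturating family into a non-induced $P$-saturating family of no larger size. The starting point is a key structural observation: any chain of $k$ sets $F_1 \subsetneq F_2 \subsetneq \dots \subsetneq F_k$ contains a non-induced copy of every $k$-element poset $P$. Indeed, pick any linear extension $p_1,\dots,p_k$ of $P$ and set $i(p_j)=F_j$; whenever $p_a\le_P p_b$, the linear extension forces $a\le b$, hence $F_a\subseteq F_b$. Contrapositively, every non-induced $P$-free family is also non-induced $C_k$-free, which supports the intuition that $C_k$ is the hardest $k$-element poset to saturate against.

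Given this, my main strategy is as follows. Let $\cF^*$ be a minimum non-induced $C_k$-saturating family, so $|\cF^*|=\sat(n,C_k)$. Then $\cF^*$ is $C_k$-free, but it may well contain non-induced copies of $P$ (for instance, any family of size at least $k$ contains a non-induced copy of the $k$-element antichain $A_k$). I would first \emph{delete} sets from $\cF^*$ to obtain a subfamily $\cF'\subseteq \cF^*$ that is non-induced $P$-free, necessarily with $|\cF'|\le\sat(n,C_k)$. What remains is to extend $\cF'$ to a non-induced $P$-saturating family without its size exceeding $|\cF^*|$.

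The plan for the extension step is to set up an injective charging from each newly added set to one of the sets removed in the first step. Concretely, for each removed $F$, I would look for an external $G\notin\cF^*$ whose addition to $\cF^*$ completed a $C_k$-copy crucially using $F$, and either verify that $G$ already creates a non-induced $P$-copy when added to $\cF'$ (no replacement needed), or add $G$ to $\cF'$ as a ``replacement'' for $F$. If such an injective matching can be arranged, the resulting family has size at most $|\cF^*|=\sat(n,C_k)$ and is, by construction, both non-induced $P$-free and maximal with respect to this property.

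The main obstacle is establishing the injective charging in full generality: one removed set $F$ may be essential to the $C_k$-completion of many external sets $G$, while restoring $P$-saturation might require adding sets that correspond to no removed $F$ at all. A natural fallback is to combine this transformation with the colex-based greedy algorithm alluded to in the paper's abstract: run the greedy process with target $P$ and couple it, set by set, with its run for $C_k$, attempting to show that whenever the $P$-process accepts a new set, the $C_k$-process has already accepted some set that ``dominates'' it in a suitable sense. Even such a coupling appears delicate, and I expect a full proof of Conjecture~\ref{chainisbest} to require either a sharper structural description of minimum $C_k$-saturating families or a substantially new idea beyond the transformation sketched here; verifying it first for restricted classes of $P$ (antichains, $V$- and $\Lambda$-posets, or posets of small height) should provide useful test cases.
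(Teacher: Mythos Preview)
The statement is a \emph{conjecture} in the paper, not a theorem; the paper offers no proof, so there is nothing to compare your attempt against in that sense. What the paper does say (in the remark following the proof of Theorem~\ref{greedy}) is that the greedy colex argument for Theorem~\ref{bounded} only used a suitable ordering of one particular $C_k$-saturating family, and that if such an ordering existed for \emph{every} minimum $C_k$-saturating family the conjecture would follow---but the known near-optimal families from \cite{MNS} do not admit such an ordering in any straightforward way. Your fallback idea of coupling the greedy colex runs for $P$ and for $C_k$ is essentially this same observation, and it runs into the same wall.

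Your primary delete-then-charge strategy has a more basic gap that you partially acknowledge but understate. After deleting sets from a minimum $C_k$-saturating family $\cF^*$ to reach a $P$-free $\cF'$, the saturation property is lost not only for the removed sets but potentially for many external sets $G\notin\cF^*$ as well: the $C_k$ created by $\cF^*\cup\{G\}$ (and hence the copy of $P$ inside it) may use several of the deleted sets. So the sets that now fail to create a copy of $P$ when added to $\cF'$ are not in bijection with the deleted sets in any natural way, and your proposed charging---pick, for each deleted $F$, one external $G$ whose $C_k$ used $F$---does not address why greedily re-saturating $\cF'$ should cost at most $|\cF^*\setminus\cF'|$ additions. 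Different greedy extensions of the same $P$-free $\cF'$ can have wildly different sizes, and nothing in the sketch ties the extension back to $\cF^*$. In short, your proposal is an honest outline of why the problem is hard rather than a proof, which is consistent with the paper's own stance that Conjecture~\ref{chainisbest} is open.
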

 
Section 2 contains the proof of Theorem \ref{bounded} along with specific better bounds for several poset classes.

Then we move on to induced saturation problems. 
We prove the following dichotomy result, which is implicitly contained in \cite{F7}.

\begin{theorem}\label{dich}
	For any poset $P$, either there exists a constant $K_P$ with $\sats(n,P)\le K_P$ or for all $n,\;\sats(n,P)\ge \log_2 n$.
\end{theorem}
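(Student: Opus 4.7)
The plan is to prove the contrapositive: if $\sats(n_0,P)<\log_2 n_0$ for some $n_0$, then there is a constant $K_P$ with $\sats(n,P)\le K_P$ for all $n$. Let $\cF\subseteq 2^{[n_0]}$ be an induced $P$-saturating family with $|\cF|=k<\log_2 n_0$. I associate to each $j\in[n_0]$ its trace $(\mathbf{1}[j\in F])_{F\in\cF}\in\{0,1\}^k$; since there are only $2^k<n_0$ possible traces, pigeonhole yields two distinct \emph{twins} $x,y\in[n_0]$ with $x\in F\iff y\in F$ for every $F\in\cF$.

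The core step is a collapse lemma: $\cF'=\{F\setminus\{y\}:F\in\cF\}\subseteq 2^{[n_0]\setminus\{y\}}$ is again induced $P$-saturating and still has size $k$. Size is preserved because two distinct $F_1,F_2\in\cF$ with $F_1\setminus\{y\}=F_2\setminus\{y\}$ would differ only in $y$, forcing the twin $x$ to lie in exactly one of them, a contradiction. Containment is preserved because $F_1\setminus\{y\}\subseteq F_2\setminus\{y\}$ forces $F_1\subseteq F_2$: any element of $F_1\setminus F_2$ would have to be $y$, but then $x\in F_1\setminus F_2$ by the twin property, surviving the restriction. Thus $\cF'$ is induced $P$-free. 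Saturation lifts by sending a test set $G'\in 2^{[n_0]\setminus\{y\}}\setminus\cF'$ to its symmetric lift $G=G'\cup\{y\}$ if $x\in G'$ and $G=G'$ otherwise; the induced $P$-copy guaranteed in $\cF\cup\{G\}$ projects back to one in $\cF'\cup\{G'\}$. Iterating the collapse while $k<\log_2$ of the current ground-set size gives $\sats(m,P)\le k$ for every $m\in(2^k,n_0]$, and the trivial bound $\sats(m,P)\le 2^m\le 2^{2^k}$ handles $m\le 2^k$.

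The main obstacle is extending this bound to $m>n_0$. I would do this by a cloning construction: starting from the collapsed saturating family $\cF^*$ on a small ground set, adjoin fresh elements as additional twins of $y$ and enlarge each member of $\cF^*$ symmetrically across the enlarged twin cluster $Y$, producing $\cF_m^*\subseteq 2^{[m]}$ of the same bounded size. Induced $P$-freeness survives by the same containment argument. The delicate point is saturation for test sets $G$ asymmetric on $Y$ (that is, $\emptyset\ne G\cap Y\ne Y$), which do not come from symmetric lifts. The key observation is that because every member of $\cF_m^*$ is symmetric on $Y$, the below-structure of $G$ in $\cF_m^*$ agrees with that of its symmetric truncation $G\setminus Y$, while its above-structure agrees with the symmetric extension $G\cup Y$. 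Applying the saturation of $\cF^*$ to one of these two symmetric projections and then transporting the resulting induced $P$-copy so that $G$ plays the role of the projection completes the argument. This splicing step is the technical heart of the proof and where I expect the main care to be required.
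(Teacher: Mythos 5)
Your overall strategy---find two elements not separated by a small saturating family, then modify the size of that unseparated class to transport the family to every other ground set, and note that separating families have size at least $\log_2 n$---is essentially the paper's route (Proposition \ref{O1G} and Corollary \ref{cor:const}), and your collapse lemma and its iteration are fine. The gap is in the step you yourself flag as the technical heart: saturation of the cloned family $\cF^*_m$ against a test set $G$ with $\emptyset\neq G\cap Y\neq Y$. You propose to apply saturation to a symmetric projection ($G\setminus Y$ or $G\cup Y$) and then ``transport'' the resulting induced copy of $P$ so that $G$ replaces the projection. This does not go through: the copy guaranteed at $G\setminus Y$ may contain a member $H\in\cF^*_m$ with $G\setminus Y\subsetneq H$ and $H\cap Y=\emptyset$; such an $H$ is above the truncation but incomparable to $G$ (it is not below $G$ either, unless $H=G\setminus Y$), so after substituting $G$ the configuration is no longer an induced copy of $P$. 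Dually, the copy at $G\cup Y$ may use members strictly below $G\cup Y$ that are not below $G$. Knowing that the down-set of $G$ equals that of $G\setminus Y$ while its up-set equals that of $G\cup Y$ does not allow you to splice the two copies together, and nothing in the sketch excludes these bad members. (A smaller slip: you clone starting from the fully collapsed family, but after collapsing, $y$ is gone and that family may be separating; you should clone from the original $\cF\subseteq 2^{[n_0]}$, which does have the twin pair.)

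The correct move, which is what Proposition \ref{O1G} does, is to compare $G$ not with a symmetric projection but with the set $F\subseteq [n_0]$ that agrees with $G$ outside the twin class and contains exactly one of the twins, say $x$ but not $y$. Since every member of $\cF$ contains both of $x,y$ or neither, $F\notin\cF$, and for every $H\in\cF$ with cloned image $H_m\in\cF^*_m$ one checks $F\subseteq H\iff G\subseteq H_m$ and $H\subseteq F\iff H_m\subseteq G$. Hence the induced copy of $P$ in $\cF\cup\{F\}$ (which must contain $F$, as $\cF$ itself is $P$-free) transfers verbatim to $\cF^*_m\cup\{G\}$ via $F\mapsto G$, $H\mapsto H_m$. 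Note that this is exactly where the non-singleton atom (the existence of the twin $y$ next to $x$) is genuinely used; it replaces your splicing step, and with it your argument becomes essentially the paper's proof.
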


We conjecture that the following strengthening of Theorem \ref{dich} holds.

\begin{conjecture}
	For any poset $P$, either there exists a constant $K_P$ with $\sats(n,P)\le K_P$ or for all $n,\; \sats(n,P)\ge n+1$.
\end{conjecture}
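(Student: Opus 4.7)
The plan is to prove the contrapositive: if $\sats(n_0,P)\le n_0$ for some $n_0$, then $\sats(n,P)$ is bounded above by a constant $K_P$ depending only on $P$. Fix such an $n_0$ and an induced $P$-saturating family $\cF\subseteq 2^{[n_0]}$ with $|\cF|\le n_0$. For $i\in[n_0]$ define the \emph{trace} $\tau(i):=(\mathbf{1}_{i\in F})_{F\in\cF}\in\{0,1\}^{\cF}$, and call $i,j$ \emph{twins} if $\tau(i)=\tau(j)$, i.e.\ every $F\in\cF$ contains both or neither of $i,j$.

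The central technical step is a \emph{twin-existence lemma}: whenever $\cF\subseteq 2^{[n]}$ is induced $P$-saturating and $|\cF|\le n$, some pair in $[n]$ is a twin pair. Naive pigeonhole only delivers this under the much stronger hypothesis $|\cF|<\log_2 n$, which is exactly what yields the bound of Theorem~\ref{dich}; to reach the linear threshold one must use $P$-saturation in an essential way. My attempt would be to assume $\tau$ is injective and, for each $i\in[n]$, use saturation to associate to the missing singleton $\{i\}$ (or its complement $[n]\setminus\{i\}$, whichever is absent from $\cF$) a labelled $(|P|-1)$-element subfamily of $\cF$ completing it to an induced copy of $P$. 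A double count of these labelled subfamilies against the $n$ distinct traces should then yield the required contradiction when $|\cF|\le n$.

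Granted the twin-existence lemma, the next step is a \emph{contraction}: for a twin pair $\{i,j\}$, the map $F\mapsto F\setminus\{j\}$ is a size-preserving bijection from $\cF$ to a family $\cF^{\ast}\subseteq 2^{[n]\setminus\{j\}}$ that preserves all inclusions, so $\cF^{\ast}$ remains induced $P$-free, and it remains saturating because any new $G\subseteq[n]\setminus\{j\}$ lifts to a new $G'\subseteq[n]$ whose forced induced copy of $P$ in $\cF\cup\{G'\}$ projects back down. Iterating preserves $|\cF|$ while shrinking the ground set by one each time, and must eventually yield a saturating family of bounded size on a ground set of bounded size; to close the argument rigorously one likely needs either a strengthened twin lemma (producing multiple disjoint twin pairs when $|\cF|$ approaches $n$) or a base-case treatment of the borderline regime $|\cF|=n$.

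Finally, to transport a bounded saturating family $\cF_{\ast}\subseteq 2^{[m]}$ back up to any $n\ge n_0$, I would inflate by $n-m$ ghost elements, each added as a new twin of some fixed $i_0\in[m]$, and verify via the mirror of the contraction argument that both $P$-freeness and saturation survive the inflation, producing the uniform bound $K_P$. The hard part is unambiguously the twin-existence lemma: bridging the exponential gap between $\log_2 n$ and $n+1$ seems to demand a qualitatively more delicate use of the induced saturation hypothesis than the bare pigeonhole underlying Theorem~\ref{dich}, and it is entirely plausible that a fundamentally different approach---perhaps one that analyses directly the structure of long chains of missing sets in $2^{[n]}\setminus\cF$---will be required to reach the conjectured linear threshold.
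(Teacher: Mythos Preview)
The statement you are attempting to prove is stated in the paper as a \emph{conjecture}; the paper does not prove it. What the paper does prove is the weaker Theorem~\ref{dich} (with $\log_2 n$ in place of $n+1$), via exactly the pigeonhole observation you mention: if $|\cF|<\log_2 n$ then two elements have the same trace, and then Corollary~\ref{cor:const} inflates that twin pair to give a bounded saturating family on every larger ground set.

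Your proposal correctly isolates the entire content of the conjecture as the ``twin-existence lemma'': any induced $P$-saturating $\cF\subseteq 2^{[n]}$ with $|\cF|\le n$ must fail to separate some pair. Note that once a twin pair exists, the paper's Corollary~\ref{cor:const} (your ``inflation'' step) already finishes the argument for all larger $n$; your contraction/iteration detour is unnecessary, and as you yourself observe it does not actually iterate, since $|\cF|$ is preserved while the ground set shrinks, so the hypothesis $|\cF|\le n$ is lost after one step.

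The genuine gap is that you have no proof of the twin-existence lemma. The double-count you sketch does not yield a contradiction: associating to each $i\in[n]$ a labelled $(|P|-1)$-tuple from $\cF$ gives a map into a set of size up to $|\cF|^{|P|-1}\ge n^{|P|-1}$, which is much larger than $n$, so pigeonhole says nothing. To extract a coincidence of traces you would need to show this map is extremely non-injective in a structured way, and nothing in your outline does that. You acknowledge as much in your final paragraph. So the proposal is an accurate reformulation of the open problem, together with the (correct and already-in-the-paper) reduction to the twin-existence statement, but it does not constitute progress toward a proof; the conjecture remains open.
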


In Section 3, we prove Theorem \ref{dich} and a number of lower and upper bounds on $\sats(n,P)$ for several classes of posets $P$. In particular, with a new construction and with the recent lower bound by Ivan \cite{I}, we establish $\sats(n,\bowtie)=\Theta(n)$.

A collection of bounds can be found in Table \ref{table}.
We list the best known bounds for all posets on at most $4$ elements and some further general results.
For the induced results we marked in which paper or statement the proof can be found, while the non-induced results follow from Proposition \ref{max1}, or a short case analysis.\\

\textbf{Notation.} As we work in $B_n$, the poset of all subsets of an $n$-element set ordered by inclusion, we will speak about families $\cF$ of subsets and we will say that these families are non-induced/induced $P$-saturating if so are the corresponding subposets of $B_n$. We use the standard notation $[n]$ for the set of the first $n$ positive integers and $2^X$ for the power set of $X$.
We say that two elements $x,y\in [n]$ are \emph{separated} by $\cF$ if there is an $F\in \cF$ such that $|F\cap \{x,y\}|=1$.
The family $\cF$ is \emph{separating} if any two elements of $[n]$ are separated by $\cF$.

For two posets, $P$ and $Q$, we denote their (incomparable) disjoint union by $P+Q$. We denote $P+\dots +P$, the disjoint union of $k$ copies of $P$, by $kP$.
We denote the chain on $k$ elements by $C_k$, and the antichain of $k$ elements by $A_k$, i.e., $A_k=kC_1$.
For an arbitrary poset $P$, we denote by $\dot P$ the poset obtained from $P$ by adding to it an element that is larger than all elements of $P$.

The poset on $4$ elements in which two incomparable elements are both below two other elements that are incomparable is called the \emph{butterfly} poset denoted by $\bowtie$. 

Throughout the paper $\log$ stands for the logarithm in base 2.

\begin{table}
	\centering
	\setlength{\tabcolsep}{10pt}
	\begin{tabular}{| c | l | l l |}
		\hline
		{\bf poset} $P$ 	& ${\bf sat}(n,P)$  	& \multicolumn{2}{l|}{${\bf sat}^*(n,P)$} \\ \hline
		
		
		$C_2$, chain & $= 1$ & \multicolumn{2}{l|}{$= 1$} \\ \hline
		
		$A_2$, antichain & $= 1$ & \multicolumn{2}{l|}{$= n+1$} \\ \hline
		
		$C_3$, chain & $=2$ & \multicolumn{2}{l|}{$=2$} \\ \hline
		
		$C_2+C_1$, chain and single & $=2$ & $=4$ & case analysis
		\\ \hline
		
		$\vee$ fork (or $\wedge$) & $=2$ & $=n+1$ & [F7] \\ \hline	
		
		$A_3$, antichain & $=2$ & $=3n-1$ & [F7] \\ \hline
		
		$C_4$, chain & $=4$  & $=4$ & [G6] \\ \hline
		
		$\vee_3$, fork with three tines & $=3$ & $\geq \log_2 n$ & [F7] \\ \hline	
		
		$\Diamond$, diamond 	& $=3$ 	& $\geq\sqrt{n}$ & [MSW] \\
		& 		& $\le n+1$ & [F7] \\ \hline
		
		$\Diamond^-$, diamond minus an edge & $=3$ & $=4$ & case analysis
		\\ \hline	
		
		$\Bowtie$, butterfly 	& $=4$ 	& $\ge n+1$ 	& [I] \\
		& 		& $\le 6n-10$ 	& [Thm \ref{butterfly}] \\ \hline
		
		Y & $=3$ & $\ge \log_2 n$ & [Thm. \ref{thm:uctp+chain}] \\ \hline
		
		N 	& $=3$ 	& $\ge \sqrt n$ 	& [I] \\
		& 	& $\le 2n$ 	& [F7]  \\ \hline
		
		$2C_2$ 	& $=3$ 	& $\ge n+2$ 	& [Thm. \ref{2+2}] \\
		& 		& $\le 2n$ 	& [Prop. \ref{2C2UB}] \\ \hline
		
		$C_3+C_1$, chain and single & $=3$ & $\le 8$ & [Prop. \ref{prop:const}]	\\ \hline
		
		$\vee+1$, fork and single & $=3$ & $\ge \log_2 n$ & [F7] \\ \hline
		
		$C_2+A_2$ & $=3$ & $\le 8$ & [Prop. \ref{prop:const}]	\\ \hline
		
		$A_4$, antichain 	& $=3$ 	& $\ge 3n-1$ & [F7] \\
		& 		& $\le 4n+2$ & [F7] \\ \hline

		$C_5$, chain & $=8$	& $=8$ 		& [G6]+[MNS] \\ \hline
		
		$C_6$, chain & $=16$	& $=16$ 		& [G6]+[MNS] \\ \hline
		
		$C_k$, chain ($k\ge 7$) & $\ge 2^{(k-3)/2}$ 	& $\ge 2^{(k-3)/2}$ 	& [G6] \\
		& $\le 2^{0.98k}$& $\le 2^{0.98k}$ 	& [MNS] \\ \hline
		
		$A_k$, antichain & $=k-1$	& $\ge \left(1-\frac{1}{\log_2 k}\right)\frac{k}{\log_2 k}n$ 	& [MSW] \\
		& 	& $\le kn-k-\frac 12\log_2 k+O(1)$ \hspace{-6mm} & [F7] \\ \hline
		
		$3C_2$ &	$=5$		&   $\le 14$ 			& [Prop. \ref{prop:3C2}] \\ \hline

		$5C_2$ 	&	$=9$		&   $\le 42$ 			& [Prop. \ref{prop:const}] \\ \hline
		
		$7C_2$ 	&	$=13$	&   $\le 60$ 			& [Prop. \ref{prop:const}] \\ \hline		
		
		%
		%
		any poset on $k$ elements & $\le 2^{k-2}$ & --- & [Thm.~\ref{bounded}] \\ \hline
		UCTP \footnotesize{(def.~in Section \ref{sec:uctp})}  & $O(1)$ & $\ge \log_2 n$  & [F7]   \\ \hline
		UCTP with top chain & $O(1)$& $\ge \log_2 n$ & [Thm. \ref{thm:uctp+chain}]   \\ \hline
		chain + shallower & $O(1)$ &  $O(1)$ & [Thm.~\ref{thm:chain+shallow}] \\ \hline

	\end{tabular}
	\caption{Summary of all posets with at most 4 elements as well as some additional examples and categories.
		\footnotesize F7: Ferrara et al.~\cite{F7}; G6: Gerbner et al.~\cite{G6}; I: Ivan~\cite{I}; MNS: Morrison, Noel, Scott~\cite{MNS}; MSW: Martin, Smith, Walker~\cite{MSW}.}
	\label{table}
\end{table}

\section{Non-induced results}
In this section we only consider non-induced results, therefore we omit this adjective throughout the section.

Note that if $P$ is a poset on $k$ elements, then $\sat(n,P)\ge k-1$ trivially holds if $n$ is big enough.
We show that this bound is often tight, due to the non-induced nature of the problem.

\begin{proposition}\label{max1}
	If $P$ is a poset on $k$ elements,	
	and $\exists p\in P$ such that there is at most one larger and at most one smaller element than $p$ in $P$, i.e., $|\{q:q<_Pp\}|,|\{q':p<_Pq'\}|\le 1$, then $\sat(n,P)=k-1$ for $n\ge k$.
\end{proposition}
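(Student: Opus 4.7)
The plan is to prove $\sat(n,P)=k-1$ by separate bounds. The lower bound $\sat(n,P)\ge k-1$ is immediate: if $|\cF|\le k-2$ then $|\cF\cup\{G\}|\le k-1<k=|P|$ for any added $G$, so no non-induced copy of $P$ can appear, contradicting saturation. (For $n\ge k$ there is always such a $G$, since a chain of $k$ subsets of $[n]$ already realises $P$ non-inducedly, forcing $\cF\neq 2^{[n]}$.) The bulk of the work is the upper bound, for which I would exhibit a $P$-saturating chain of size exactly $k-1$.

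Write $P':=P\setminus\{p\}$ and let $p^-,p^+$ denote the unique elements (if any) strictly below and above $p$. The key preliminary observation is that $p^-$ must then be a \emph{minimum} of $P$: if some $r<_P p^-$ existed, transitivity would give $r<_P p$, forcing $r=p^-$ by uniqueness; symmetrically $p^+$ is a maximum of $P$. Consequently there is a linear extension $q_1,\dots,q_{k-1}$ of $P'$ beginning with $p^-$ (when present) and ending with $p^+$ (when present), and these two elements are distinct since $p^-<_P p<_P p^+$. Now I would take $\cF$ to be a chain $F_1\subsetneq F_2\subsetneq\cdots\subsetneq F_{k-1}$ in $2^{[n]}$ with $F_1:=\emptyset$ when $q_1=p^-$, $F_{k-1}:=[n]$ when $q_{k-1}=p^+$, and the remaining $F_j$ any strictly increasing chain of distinct proper nonempty subsets of $[n]$; the assumption $n\ge k$ leaves sufficient room. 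Defining $i(q_j):=F_j$, the map $i$ is a non-induced embedding of $P'$ (the chain $F_1\subsetneq\cdots\subsetneq F_{k-1}$ respects the linear extension, which in turn respects $\le_P$), and $|\cF|=k-1<|P|$ trivially precludes any non-induced copy of $P$ in $\cF$.

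To check saturation, fix an arbitrary $G\in 2^{[n]}\setminus\cF$ and extend by $i(p):=G$. The only $\le_P$-relations involving $p$ are (at most) $p^-<_P p$ and $p<_P p^+$, and the required inclusions $i(p^-)=\emptyset\subseteq G$ and $G\subseteq[n]=i(p^+)$ hold automatically, with either check being vacuous if the corresponding $p^\pm$ is absent. Thus $\cF\cup\{G\}$ contains a non-induced copy of $P$, so $\cF$ is $P$-saturating. I do not foresee a genuine obstacle; the only care point is the extremality argument that $p^-,p^+$ are the minimum and maximum of $P$, which is what legitimises pinning their images to the boundary sets $\emptyset$ and $[n]$ regardless of how the other elements of $P'$ are arranged in the chain.
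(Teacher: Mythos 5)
Your proof is correct, and its core strategy is the same as the paper's: remove $p$, exhibit a $(k-1)$-set family weakly containing $P\setminus\{p\}$ with $p^-$ pinned to $\emptyset$ and $p^+$ pinned to $[n]$, note that $P$-freeness is automatic from $|\cF|=k-1<|P|$, and use the fact that $p^-$ and $p^+$ are the only elements comparable to $p$ so that every $G\notin\cF$ can play the role of $p$. Where you differ is in how the weak copy of $P\setminus\{p\}$ is realized: the paper takes a minimal embedding $i$ of $P\setminus\{p\}$ into some $2^{[m]}$ with $m\le k-2$ and then modifies it ($p^-\mapsto\emptyset$, $p^+\mapsto[n]$, all other images augmented by the element $m+1$), whereas you take a chain $F_1\subsetneq\cdots\subsetneq F_{k-1}$ listed along a linear extension of $P\setminus\{p\}$ that begins at $p^-$ and ends at $p^+$, with the endpoints set to $\emptyset$ and $[n]$. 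Your realization exploits the non-induced setting more directly (a chain weakly contains every poset of its size), so it dispenses with the auxiliary embedding entirely; the only thing it needs instead is the existence of such a linear extension, which your minimality/maximality observation provides. One terminological nit: your argument shows only that $p^-$ is a \emph{minimal} element of $P$ (and $p^+$ \emph{maximal}), not a minimum/maximum; this is harmless, since minimality and maximality are exactly what is required to place $p^-$ first and $p^+$ last in a linear extension. Your explicit lower-bound paragraph corresponds to the trivial bound $\sat(n,P)\ge k-1$ that the paper records in the remark preceding the proposition.
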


\begin{proof}
	Let $p^+,p^-\in P\setminus \{p\}$ be the only elements with $p^-<p<p^+$ (if they exist). Then, by definition, $p^+$ is maximal and $p^-$ is minimal in $P$. Let $m$ be the smallest integer such that $Q_m$ contains a copy of $P\setminus \{p\}$, and let $i:P\setminus \{p\} \rightarrow 2^{[m]}$ be an embedding showing this. Note that $m\le k-2$ as $Q_{k-2}$ contains a copy of $C_{k-1}$ which in turn contains a copy of any other poset of size at most $k-1$. Let $i_n:P\setminus \{p\} \rightarrow 2^{[n]}$ be defined as $i_n(p^-)=\emptyset$, $i_n(p^+)=[n]$, and $i_n(p')=i(p)\cup \{m+1\}$ for any $p'\notin \{p,p^+,p^-\}$. If $n\ge m+2$, then $i_n$ is a bijection and its image is a copy of $P\setminus \{p\}$ (here we use the fact that $p^+$ is maximal and $p^-$ is minimal in $P$).  Clearly any $F\notin i_n[P\setminus\{p\}]$ extends the image of $i_n$ to a copy of $P$ as $i_n(p^-)=\emptyset \subset F \subset [n]=i_n(p^+)$ and so $F$ is a suitable image of $p$.	
\end{proof}

Before getting to the more involved proofs, let us state another simple observation.

\begin{proposition}\label{bk-2}
	If $P$ is a poset on $k$ elements and $B_k$ is $P$-free, then $sat(n,P)\le 2^{k-2}$.
\end{proposition}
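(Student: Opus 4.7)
The first step I would take is to check whether the hypothesis is ever actually satisfied. Labeling $P = \{p_1, \dots, p_k\}$, I would consider the principal down-set map $\phi: P \to B_k$ defined by $\phi(p_i) = \{j \in [k] : p_j \le_P p_i\}$. A quick verification shows that $\phi$ is an order-preserving injection: injectivity follows from the observation that $i \in \phi(p_i)$ for every $i$ (since $p_i \le_P p_i$), so if $\phi(p_i) = \phi(p_j)$ then $p_i \le_P p_j$ and $p_j \le_P p_i$, forcing $p_i = p_j$ by antisymmetry; monotonicity is immediate from transitivity of $\le_P$. Hence $\phi(P) \subseteq B_k$, together with $\phi$ as witnessing bijection, is always a non-induced copy of $P$ inside $B_k$, contradicting the hypothesis. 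So when $|P| = k$ the hypothesis cannot hold at all, and the proposition holds vacuously, requiring no explicit construction of a saturating family.

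The reason the hypothesis turns out to be so restrictive is precisely the flexibility of the non-induced relation: its witnessing bijection need only preserve order in one direction, and the down-set map exhibits such a bijection systematically. If the intended hypothesis had been slightly stronger — say, $B_{k-2}$ rather than $B_k$ is $P$-free, a condition which does hold non-vacuously (for instance $P = C_k$, since $B_{k-2}$ has no chain of length $k$) — the statement would have real content. In that alternative setting, my approach would be to take $\cF$ to be a full isomorphic copy of $B_{k-2}$ inside $2^{[n]}$, say $\cF = \{F : F \subseteq [k-2]\}$; this has $|\cF| = 2^{k-2}$ and is automatically $P$-free by hypothesis. The main obstacle then would be verifying saturation: for every $G \in 2^{[n]} \setminus \cF$ one must exhibit a non-induced copy of $P$ inside $\cF \cup \{G\}$. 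This does not follow formally from $\cF \cup \{G\}$ failing to embed into $B_{k-2}$, so the construction would likely need to be enriched with extremal anchor sets such as $\emptyset$ and $[n]$ in the spirit of Proposition~\ref{max1}, and the saturation step verified by a careful case split on whether $G$ is comparable, incomparable, or partially comparable to the elements of the embedded $B_{k-2}$.
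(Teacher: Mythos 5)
You are right that, read literally, the hypothesis is empty: the principal down-set map (or simply placing a linear extension of $P$ on a maximal chain) shows that $B_k$ — indeed already $B_{k-1}$ — contains a non-induced (in fact induced) copy of every $k$-element poset, so ``$B_k$ is $P$-free'' never holds. This is a typo in the statement: the label \emph{bk-2} and the paper's argument make clear that the intended hypothesis is that $B_{k-2}$ is $P$-free (a condition that does hold non-vacuously, e.g.\ for $P=C_k$), and the paper's proof uses only that. So your vacuity argument, while formally a proof of the literal wording, does not engage with the statement's actual content, and your second paragraph — which does address the intended statement — stops exactly where the real work is.

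For the intended statement the step you leave open is where the paper's single idea lives. The paper does not take the plain copy $\{F: F\subseteq[k-2]\}$; it takes the $C_k$-saturating copy of $B_{k-2}$ from Gerbner et al.~\cite{G6}, namely $f(F)=F$ if $k-2\notin F$ and $f(F)=F\cup([n]\setminus[k-2])$ otherwise, and sets $\cF=f(2^{[k-2]})$. This $\cF$ is poset-isomorphic to $B_{k-2}$, hence $P$-free under the corrected hypothesis, and for every $G\notin\cF$ the family $\cF\cup\{G\}$ contains a copy of $C_k$; since every $k$-element poset has a linear extension, any chain of $k$ sets is a non-induced copy of $P$, so $\cF$ is $P$-saturating with $|\cF|=2^{k-2}$. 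Your proposed repair, the plain copy enriched with the anchors $\emptyset$ and $[n]$, genuinely fails: $\emptyset\subset\{1\}\subset\dots\subset[k-2]\subset[n]$ is already a chain of $k$ sets, so the enriched family contains $C_k$ and is not $P$-free when $P=C_k$, while the unenriched copy is not saturating (adding $\{k-1\}$ creates no copy of $C_k$, hence no copy of $P$ in general). The two missing ingredients are thus (i) the twisted embedding that makes the $B_{k-2}$-copy saturating for the $k$-chain, and (ii) the observation that a copy of $C_k$ automatically yields a non-induced copy of every $k$-element poset.
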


\begin{proof}
Let $f:2^{[k-2]} \rightarrow 2^{[n]}$ be defined as $f(F)=F$ if $k-2\notin F$ and $f(F)=F \cup ([n]\setminus [k-2])$ otherwise. Then $f$ is a poset isomorphism between $B_{k-2}$ and $f(2^{[k-2]})=:\cF$, so $\cF$ is $P$-free. On the other hand, $\cF$ is the $C_k$-saturating family from \cite{G6}, so for any $G\notin \cF$, the family $\cF\cup \{G\}$ contains a copy of $C_k$ and thus a copy of $P$.
\end{proof}

\subsection{Proof of Theorem \ref{bounded}}

Let us define the colexicographic ordering (or \textit{colex ordering}) on all finite subsets of $\mathbb{Z}^+$ as usual by $A<B$ if and only if $\max A\triangle B\in B$ holds, where $A\triangle B$ is the symmetric difference $(A\setminus B)\cup (B\setminus A)$. Let $P$ be any poset on $k$ elements. Let $n\ge k$ and let $F_1,F_2,\dots,F_{2^{n-1}}$ be the enumeration of $2^{[n-1]}$ (the sets not containing $n$) in colex order, let $m_i=\max F_i$, and let $G_i=[n]\setminus F_i$ for every $1\le i \le 2^{n-1}$ (the sets containing $n$). Note that every subset of $[n]$ is either enumerated as an $F_i$ or as a $G_i$. Let us consider the \emph{greedy colex process} that tries to add these sets in order (see Algorithm \ref{algo}).

Theorem \ref{bounded} is an immediate consequence of the following.

\begin{theorem}\label{greedy}
For $1\leq k\leq n$, let $P$ be a $k$-element poset and let $\cF:=\cF_{2^{n-1}}$ be the output of the greedy colex process (as defined in Algorithm \ref{algo}). Then, $\cF$ is $P$-saturating, $\cF=\cF_{2^{k-3}}$ and therefore $|\cF|\le 2^{k-2}$. In particular, $\sat(n,P)\le 2^{k-2}$ holds.
\end{theorem}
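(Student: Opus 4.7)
The plan is to prove the stronger statement that $\cF_{2^{k-3}}$ is itself $P$-saturating. This gives both desired conclusions at once: no set can be added from step $2^{k-3}+1$ onward, so $\cF=\cF_{2^{k-3}}$; and since each iteration considers at most the two sets $F_i,G_i$, we have $|\cF|\le 2\cdot 2^{k-3}=2^{k-2}$.

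That $\cF_{2^{k-3}}$ is $P$-free is immediate from the greedy rule. For maximality, fix $H\in 2^{[n]}\setminus \cF_{2^{k-3}}$. If $H=F_j$ or $G_j$ for some $j\le 2^{k-3}$, then $H$ was rejected at step $j$, which by definition means the family at that time together with $H$ already contained a copy of $P$; since the family only grows monotonically, the same copy survives in $\cF_{2^{k-3}}\cup\{H\}$. The real work is therefore the case $H=F_j$ or $G_j$ with $j>2^{k-3}$.

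Here I would exploit a crucial feature of colex order: the first $2^{k-3}$ enumerated sets $F_1,\ldots,F_{2^{k-3}}$ are exactly the subsets of $[k-3]$, and their complements $G_1,\ldots,G_{2^{k-3}}$ are exactly the supersets of $\{k-2,\ldots,n\}$. Setting $\cL:=2^{[k-3]}$ and $\cU:=\{S\subseteq[n] : \{k-2,\ldots,n\}\subseteq S\}$, we have $\cF_{2^{k-3}}\subseteq\cL\cup\cU$, and this union is isomorphic to $B_{k-2}$ via precisely the map used in the proof of Proposition~\ref{bk-2}. For $j>2^{k-3}$ the set $F_j$ satisfies $\max F_j\ge k-2$, so $H$ lies outside $\cL\cup\cU$ and has a definite comparability pattern with each set in $\cF_{2^{k-3}}$.

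The task reduces to showing that for every such exterior $H$ and every $k$-element poset $P$, a copy of $P$ can be assembled from $H$ and sets in $\cF_{2^{k-3}}$. I would approach this by an invariant maintained throughout the greedy run: after step $i$, for every comparability profile an exterior set could have with respect to $\cL\cup\cU$ and every element $p\in P$ that such a set could plausibly represent in a copy of $P$, the family $\cF_i$ already contains a copy of $P\setminus\{p\}$ placed so that any $H$ of that profile completes it to a copy of $P$. The colex ordering is decisive here because it processes sets in a way aligned with the $B_{k-2}$-structure of $\cL\cup\cU$, so the required sub-poset copies can be refined monotonically as $i$ grows. The main obstacle is verifying that the greedy's $P$-blind decisions (it only tests $P$-freeness and nothing about which sub-posets it is building) nevertheless realise all of these sub-poset placements simultaneously by step $2^{k-3}$, uniformly across every $k$-element poset $P$.
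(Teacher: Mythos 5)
There is a genuine gap. Your reduction is set up correctly (the first $2^{k-3}$ sets in the process are exactly the subsets of $[k-3]$ and their complements, and the cases $j\le 2^{k-3}$ follow by monotonicity of the greedy rejections), but the entire substance of the theorem is the case of an ``exterior'' $H$, and there your argument is only a plan: the ``invariant'' that after step $i$ the family contains, for every comparability profile and every $p\in P$, a copy of $P\setminus\{p\}$ completable by any $H$ of that profile, is never established, is quantified far more strongly than what is actually needed (or likely true --- the greedy may reject many sets of $\cL\cup\cU$ depending on $P$, so you cannot lean on the $B_{k-2}$ structure as in Proposition~\ref{bk-2}), and you yourself flag its verification as ``the main obstacle.'' So the proposal does not prove that adding an exterior set creates a copy of $P$, which is precisely the content of the theorem.

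The paper closes this gap by a quite different mechanism, and two ideas are missing from your proposal. First, a persistence lemma about the greedy colex process (Lemma~\ref{colex}): if $F_i$ (resp.\ $G_i$) is accepted, then the sets obtained by deleting its maximum element, or by the complementary operations ($F_i\cup([n]\setminus[m_i])$, $G_i\cup\{m_i\}$, $G_i\cap[m_i]$), were already accepted at earlier steps, because they relate to every previously considered set exactly as $F_i$ (resp.\ $G_i$) does. Second, the observation that a chain of $k$ sets contains a non-induced copy of \emph{every} $k$-element poset; this is exactly what makes the argument uniform in $P$, which is the point you worried about. Combining these, one argues by contradiction: if some $H$ with index beyond $2^{k-3}$ were accepted, repeated applications of the lemma build a chain of length $m_i+2\ge k$ inside $\cF$ itself, contradicting the $P$-freeness of $\cF$; hence $\cF=\cF_{2^{k-3}}$, and saturation of $\cF$ is automatic from the greedy definition (one never needs to prove maximality of $\cF_{2^{k-3}}$ directly, as you attempt). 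Your write-up contains neither the persistence lemma nor the chain-contains-every-poset step, so as it stands the proof does not go through.
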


\begin{proof}
The fact that $\cF$ is $P$-saturating (that it is both $P$-free and the addition of any element forms a copy of $P$) is clear from the definition of the greedy process. Observe that if $j<i$, then $F_i \not\subseteq F_j$ and consequently $G_j \not\subseteq G_i$. Finally, note that $G_j\not\subset F_i$ for all $i,j$.  Based on this we have the following lemma.

\begin{lemma}\label{colex} For any $i\le 2^{n-1}$ we have the following.
\begin{enumerate}
\item
$F_i\in \cF_i$ implies $F_i\setminus \{m_i\}\in \cF_j$ for some $j< i$.
\item
$F_i \in \cF_i$ implies $F_i \cup ([n]\setminus [m_i]) \in \cF_j$ for some $j< i$.
\item
$G_i \in \cF_i$ implies $G_i \cup \{m_i\}\in \cF_j$ for some $j< i$.
\item
$G_i \in \cF_i$ implies $G_i \cap [m_i]\in \cF_j$ for some $j< i$.
\end{enumerate}
\end{lemma}

\begin{algorithm}\label{algo}
	\caption{Greedy colex process}
	\begin{algorithmic}
		\STATE Set $\cF_0=\emptyset$
		
		\FOR {$i<2^{n-1}$}
		\IF {$\cF_i\cup \{F_{i+1}\}$ is $P$-free}
		\STATE $\cF'_i:=\cF_i\cup \{F_{i+1}\}$
		\ELSE \STATE{$\cF'_{i}:=\cF_i$}
		\ENDIF
		\IF {$\cF_i' \cup \{G_{i+1}\}$ is $P$-free}
		\STATE $\cF_{i+1}:=\cF'_i\cup \{G_{i+1}\}$
		\ELSE \STATE {$\cF_{i+1}:=\cF'_i$}
		\ENDIF
		\ENDFOR
		
		Output $\cF_{2^{n-1}}$
	\end{algorithmic}
\end{algorithm}	

\begin{proof}
To see (1), let $j$ be defined such that $F_j=F_i\backslash\{m_i\}$ and observe that $j<i$. We claim that for any $H\in \cF_{j-1}$, the pair ($H,F_j$) has the same relations as the pair ($H,F_i$).  
If $H$ was enumerated as an `$F$', i.e., $n\notin H$, then $H < F_j$ means that $H$ contains neither $F_j$ nor $F_i$.  In addition, such an $H$ must be a subset of $[m_j]$, and therefore, $H\subset F_j \iff H\subset F_i$.  
Otherwise, $H$ was enumerated as a `$G$', i.e., $n\in  H$, and thus contains $[n]\setminus [m_j]$.  In particular, $H$ is contained in neither $F_j$ nor $F_i$ and $F_j\subset H \iff F_i\subset H$.
Therefore, as $F_i$ is included in $\cF_i$ because its addition did not create a copy of $P$ in $\cF_i$, the addition of $F_j$ also does not create a copy of $P$ in $\cF_j$, and so it was added to the family.
Thus we must also have $F_i \setminus \{m_i\}=F_j\in \cF_j\subseteq \cF_i$. 

As the proofs of the other statements are similar, we just sketch them. To see (2), observe that $[n]\setminus (F_i \cup ([n]\setminus [m_i])) \subseteq [m_i-1]$, thus $F_i \cup ([n]\setminus [m_i])=G_j$ for some $j<i$. It is left to the reader to check that for any $H\in \cF_j'$ the containment relation of the pair ($H,G_j$) is the same as that of the pair ($H,F_i$).

To see (3), observe that $G_i \cup \{m_i\}=G_j$ for some $j<i$. It is left to the reader to check that for any $H\in \cF'_j$, the containment relation of the pair ($H,G_j$) is the same as that of ($H,G_i$).

Finally, to see (4), observe that $G_i \cap [m_i] \subseteq [m_i-1]$, thus $G_i \cap [m_i]$ is $F_j$ for some $j<i$. It is left to the reader to check that for any $H\in \cF_{j-1}$ the containment relation of the pair ($H,F_j$) is the same as that of the pair ($H,G_i$).
\end{proof}

Let us return to the proof of Theorem \ref{greedy}. Towards a contradiction, suppose that there exists some $H\in \cF\setminus \cF_{2^{k-3}}$. We distinguish two cases.

\vskip 0.2truecm

\textsc{Case I.} $H=F_i$ for some $i> 2^{k-3}$.

\vskip 0.15truecm

Then write $F_i=\{h_1,h_2,\dots,h_l\}$ with $h_1<h_2<\dots<h_l=m_i$, where $m_i>k-3$. Repeated applications of Lemma \ref{colex} (1) imply that $H_r=\{h_1,h_2, \dots, h_r\}\in \cF_i$ holds for all $r=l,l-1,\dots,1,0$, giving a decreasing chain of length $l+1$ contained in $\cF$.
By Lemma \ref{colex} (2), $G_j=F_i\cup([n]\backslash[h_l])\supsetneq F_i$ and $G_j \in \cF$.
Repeated applications of Lemma \ref{colex} (3), starting with $G_j$, give an increasing chain of length $m_i-l+1$ contained in $\cF$.
Putting the two chains together gives a chain of length $m_i+2\ge k$ contained in $\cF$, a contradiction, because such a chain (and thus also $\cF$) contains a (non-induced) copy of $P$.

\vskip 0.2truecm

\textsc{Case II.} $H=G_j$ for some $j> 2^{k-3}$.

\vskip 0.15truecm

We apply Lemma \ref{colex} (4) to $G_j$ to get an $F_i\subsetneq G_j$ such that $F_i\in \cF$. Then we get a chain of length $m_i+2\ge k$ as the union of two chains as in the previous case by repeated applications of Lemma \ref{colex} (1) and (3).
\end{proof}

One might notice that in this proof we have only used that a given family of subsets of $[n]$ that is saturating for $C_k$ (namely the sets in $[k-3]$ and their complements) could be ordered in a suitable way using the colex ordering.
If this was true for other saturating families for $C_k$, then that would prove Conjecture \ref{chainisbest}.
However, the families achieving the current best bound, $\sat(n,C_k)\le O(2^{0.98k})$ \cite{MNS}, cannot be ordered like that in a straightforward way.

\subsection{Complete bipartite posets}
In this subsection we consider complete bipartite posets $K_{s,t}$ on $s+t$ elements $a_1,\dots,a_s$, $b_1,\dots,b_t$ with $a_i<b_j$ for any $1\le i\le s$, $1\le j\le t$. Observe that if $s$ or $t$ equals 1, then Proposition \ref{max1} yields $\sat(n,K_{1,t})=t$ and $\sat(n,K_{s,1})=s$.
\begin{proposition}\label{kst}
	If $s,t\ge 2$ and $n\ge s+t-3$ hold, then $s+t-1\le \sat(n,K_{s,t})\le 2(s+t)-4$. 
\end{proposition}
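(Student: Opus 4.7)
The plan is to prove the two bounds separately. The lower bound $s+t-1\le\sat(n,K_{s,t})$ is the standard counting argument: if $\cF$ is a proper $K_{s,t}$-saturating subfamily of $2^{[n]}$ and $G\notin\cF$, then $\cF\cup\{G\}$ contains a copy of $K_{s,t}$ on $s+t$ distinct sets, so $|\cF|\ge s+t-1$. The hypothesis $n\ge s+t-3$, together with $s,t\ge 2$, is enough to rule out the degenerate case $\cF=2^{[n]}$ in all but a few very small situations, which can be checked by hand.

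For the upper bound I will exhibit an explicit saturating family of the right size. Put $m:=s+t-3$ and define
$$\cF\;:=\;\{\emptyset,[n]\}\;\cup\;\bigl\{\{i\}:i\in[m]\bigr\}\;\cup\;\bigl\{[n]\setminus\{j\}:j\in[m]\bigr\},$$
which has $2(s+t)-4$ elements (they are distinct provided $n$ is a little larger than $m$; the borderline values of $n$ are trivial since then $2^{[n]}$ itself is small enough). To see $\cF$ is $K_{s,t}$-free, note that any copy would need $s$ ``low'' sets each strictly contained in $t$ ``high'' sets, all $s+t$ of them distinct. Each $[n]\setminus\{j\}$ lies strictly below only $[n]$ in $\cF$, so it cannot serve as a low (there would be only one candidate high above it); dually, $\{i\}$ cannot be a high. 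Writing $I,J\subseteq[m]$ for the index sets of the lows of the form $\{i\}$ and the highs of the form $[n]\setminus\{j\}$, the requirement $\{i\}\subsetneq[n]\setminus\{j\}$ forces $i\ne j$, so $I\cap J=\emptyset$. Since at most one low is $\emptyset$ and at most one high is $[n]$, we also have $|I|\ge s-1$ and $|J|\ge t-1$, whence $|I|+|J|\ge s+t-2>m$, a contradiction.

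To verify saturation, fix $G\in 2^{[n]}\setminus\cF$ and set $c:=|G\cap[m]|$. If $c\le s-2$, then $|[m]\setminus G|\ge t-1$; pick $J\subseteq[m]\setminus G$ with $|J|=t-1$ and let $I:=[m]\setminus J$ (so $|I|=s-2$). The sets $L:=\{\emptyset,G\}\cup\{\{i\}:i\in I\}$ and $H:=\{[n]\}\cup\{[n]\setminus\{j\}:j\in J\}$ form a non-induced $K_{s,t}$, the crucial inclusion $G\subsetneq[n]\setminus\{j\}$ holding because $j\notin G$ and $G\ne[n]\setminus\{j\}$ (the latter since $G\notin\cF$). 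If instead $c\ge s-1$, pick $I\subseteq G\cap[m]$ with $|I|=s-1$ and set $J:=[m]\setminus I$ (of size $t-2$); then $L:=\{\emptyset\}\cup\{\{i\}:i\in I\}$ and $H:=\{G,[n]\}\cup\{[n]\setminus\{j\}:j\in J\}$ form a copy of $K_{s,t}$, the key inclusion being $\{i\}\subsetneq G$ for $i\in I\subseteq G\cap[m]$ (with $G\ne\{i\}$).

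The main obstacle is the numerology: $m=s+t-3$ is exactly chosen so that (i) the $K_{s,t}$-freeness works via $(s-1)+(t-1)>m$, and (ii) the saturation cases $c\le s-2$ and $c\ge s-1$ partition $\{0,1,\ldots,m\}$ while simultaneously leaving enough room in $[m]$ either to pick $I$ of size $s-1$ inside $G$ or to pick $J$ of size $t-1$ outside $G$. Distinctness of the exhibited elements follows automatically from $G\notin\cF$, so the remaining verification is routine.
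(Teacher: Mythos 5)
Your proof is correct and follows essentially the same route as the paper: the lower bound is the trivial $\sat(n,P)\ge|P|-1$ bound, and the saturating family is exactly the paper's construction ($\emptyset$, $[n]$, the $s+t-3$ singletons and their complements), with the same pigeonhole argument for $K_{s,t}$-freeness and the same two-case saturation check according to whether $G$ contains at least $s-1$ or at most $s-2$ of the chosen singletons. The only differences are cosmetic (you instantiate the singletons as $\{1\},\dots,\{s+t-3\}$ and are slightly more explicit about distinctness and the degenerate small-$n$ cases, which the paper glosses over at the same level of rigor).
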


\begin{proof}

	The lower bound follows from $\sat(n,P)\ge |P|-1$. For the upper bound, let $\cF_0=\{\emptyset, E_1,E_2,\dots,E_{s+t-3}\}$ with $|E_i|=1$ for all $1\le i\le s+t-3$. Let $\cF_1=\{[n]\setminus E: E\in \cF_0\}$, and finally let $\cF=\cF_0\cup \cF_1$. First observe that $\cF$ is $K_{s,t}$-free. Indeed, as $s\ge 2$ and sets in $\cF_0$ contain at most one other set from $\cF$, they can only play the role of some $a_i$. Similarly, as $t\ge 2$ and sets in $\cF_1$ are contained in at most one other set from $\cF$, they can only play the role of some $b_j$. So to form a copy of $K_{s,t}$ we would need at least $s-1$ non-empty sets from $\cF_0$ and at least $t-1$ sets other than $[n]$ from $\cF_1$, so by the pigeonhole principle we would need to pick some $E_i$ and $[n]\setminus E_i$ as well, but for these the containment does not hold.
		
		To see that $\cF$ is $K_{s,t}$-saturating, let $G$ be any set from $2^{[n]}\setminus \cF$. If $G$ contains at least $s-1$ $E_i$'s, then these $E_i$'s and $\emptyset$ can form the bottom of $K_{s,t}$. Meanwhile $G$ and the sets of $\cF_1$ that are not complements of these $s-1$ $E_i$'s (there are exactly $1+(t+s-3)-(s-1)=t-1$ of them) can form the top of $K_{s,t}$. Otherwise, $G$ contains at most $s-2$ $E_i$'s and therefore $G$ is contained in the complement of at least $(t+s-3)-(s-2)=t-1$ $E_i$'s.
		By symmetry, we can repeat the previous argument to get that $t-1$ such complements $F_1,\dots, F_{t-1}$, $[n]$, $\emptyset$, $G$, and the $E_j$'s in $\cap_{j=1}^{t-1}F_j$ form a copy of $K_{s,t}$.
\end{proof}

\subsection{Posets of graphs}
In this subsection we consider the following class of posets. Let $G=(V,E)$ be any finite multigraph without loops. Then let us define the poset $P(G)$ on $V\cup E$ such that $v< e$ if and only if $v\in e$, while $V$ and $E$ form two antichains.

\begin{proposition}\label{keszegh}
Let $G$ be a graph with $e$ edges and $v$ vertices and let $n\ge e+v$. 
Then we have $e+v-1\le\sat(n,P(G))\le e+v$.
\end{proposition}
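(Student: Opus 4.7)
The lower bound $\sat(n,P(G))\ge e+v-1$ is immediate from the general bound $\sat(n,P)\ge|P|-1$ (which holds for $n\ge|P|$), applied with $|P(G)|=e+v$.

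For the upper bound I would construct an explicit $P(G)$-saturating family $\cF\subseteq 2^{[n]}$ of size $e+v$. A natural starting point is to realize $P(G)$ itself inside $2^{[n]}$: identify $V(G)$ with $[v]$, label the edges $e_1,\dots,e_e$, and reserve the elements $v+1,\dots,v+e$ of $[n]$ as unique ``edge-tokens''. Set $A_i=\{i\}$ for each vertex $v_i$ and $B_k=\{a,b,v+k\}$ for each edge $e_k=\{v_a,v_b\}$. This family $\{A_i\}\cup\{B_k\}$ has $e+v$ sets and is order-isomorphic to $P(G)$, so it is not yet $P(G)$-free. To break the $P(G)$-copy, one perturbs a single set---for example, enlarging $A_1$ to $\{1,2\}$, or modifying one $B_k$ so as to drop a required containment---while keeping size $e+v$. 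The crucial feature of the edge-tokens is that each $B_k$ is not a subset of any other set of $\cF$, which forces every non-induced embedding of $P(G)$ into $\cF$ to use the $B_k$'s for the edge roles of $P(G)$.

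Verification has two parts. For $P(G)$-freeness, the $B_k$'s must play edges in every embedding, so the only remaining freedom is a bijection between vertex roles of $P(G)$ and the ``lower-level'' sets $\{A_1',A_2,\dots,A_v\}$; a case analysis (exploiting the broken containment) shows that no such bijection preserves all the required inclusions. For saturation, any $G'\in 2^{[n]}\setminus\cF$ should complete a $P(G)$-copy: depending on $G'$, either $G'$ takes the place of the perturbed set in the natural embedding, or $G'$ slots in as an edge-set after a mild reshuffling of the vertex labels.

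The main obstacle I anticipate is ruling out alternate non-induced bijections in the freeness step: the non-induced setting is flexible, since any set that contains the required lower sets is a valid vertex role, so the perturbation must be chosen carefully to defeat every alternative---especially when $G$ has isolated vertices (which impose no inclusion constraints and could allow a $B_k$ to play a vertex role) or multi-edges. A single perturbation may therefore have to be adapted to the structure of $G$ (e.g.\ using a vertex of maximum degree, or handling isolated vertices separately), but any such refinement should preserve the bound $e+v$.
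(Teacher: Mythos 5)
Your lower bound is fine and matches the paper. The upper bound, however, has a genuine gap in the saturation step, and I do not believe the ``natural copy of $P(G)$ plus one perturbation'' shape can be repaired. Since your family has exactly $e+v$ sets, freeness is indeed arrangeable, but saturation requires that \emph{every} $G'\in 2^{[n]}\setminus\cF$ completes a copy, and your sets are small, middle-level sets supported on $[v+e]$: many candidate sets $G'$ are comparable to at most one member of $\cF$, and such a $G'$ can never be used in a copy of $P(G)$ when $G$ has minimum degree at least $2$ (an edge role must contain two sets, a vertex role of degree $d$ must be contained in $d$ sets). Concretely, take $G$ a triangle with your construction $A_i=\{i\}$, $B_k=\{a,b,v+k\}$ and, say, $A_1$ enlarged to $\{1,2\}$: this family is $P(G)$-free, but adding the token singleton $\{v+1\}=\{4\}$ creates no copy, since $\{4\}$ lies below only $B_1$ and contains nothing, and after adding it there are still only two sets containing two or more family members, while a triangle copy needs three edge roles. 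The same problem occurs with any singleton outside $\bigcup_k B_k$ when $n>e+v$, so no choice of a single perturbation fixes it; the obstruction is the overall shape of the family, not the particular perturbation. (Your worry about isolated vertices and multi-edges is also left unresolved, but it is secondary to this.)

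The paper's construction is structurally different precisely to avoid this: apart from the case $\delta\le 1$ (handled by Proposition \ref{max1}, giving $e+v-1$), it takes for $\delta\ge 2$ the family $\cF_0=\{\emptyset,E_1,\dots,E_{v-1},K\}$ of near-empty sets together with $\cF_1=\{[n],G_1,\dots,G_{\delta-2},H_1,\dots,H_{e-\delta}\}$ of sets of size $n-1$ or $n$. Freeness comes from a counting argument (each member of $\cF_0$ contains at most one other member, so only the $e-1$ sets of $\cF_1$ can play edge roles), and saturation works because any added set $F$ is automatically above many of the small sets and below many of the large ones; the assignment of edges of $G$ to sets of $\cF_1$ is then completed via Hall's theorem. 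If you want to salvage your token idea, you would have to replace the size-$3$ edge sets by co-small sets so that arbitrary new sets are comparable to enough of the family, at which point you essentially arrive at the paper's construction.
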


\begin{proof}
The lower bound follows from $|P|-1\le\sat(n,P)$.
If the minimum degree $\delta$ is at most $1$, then we have $\sat(n,P(G))=e+v-1$ from Proposition \ref{max1}.


Suppose from now on that $\delta \ge 2$ (which implies $e\ge 2$). The construction for the upper bound is as follows. Let $\cF_0=\{\emptyset, E_1,E_2,\dots,E_{v-1},K\}$ with $|E_i|=1$ for all $1\le i\le v-1$ and  $K=[n]\setminus \cup_{i=1}^{v-1}E_i$. Let us define $G_i=[n]\setminus E_i$ for all $1\le i\le v-1$. Let $H_1,H_2,\dots,H_{e-\delta}$ be sets of size $n-1$ all containing $\cup_{i=1}^{v-1}E_i$. Put $\cF_1=\{[n],G_1,\dots,G_{\delta-2},H_1,H_2,\dots,H_{e-\delta}\}$ and consider $\cF=\cF_0\cup \cF_1$. Observe that $|\cF|=e+v=|P(G)|$ and all sets in $\cF_0$ contain at most one other set in $\cF$, so in a copy of $P$ only the $e-1$ sets in $\cF_1$ could play the role of edges of $G$ and thus $\cF$ is $P(G)$-free. Let $F$ be any set not in $\cF$. We need to show that $\cF\cup \{F\}$ contains a copy of $P(G)$. We distinguish two cases.

If $F\subsetneq K$, then $F$ is contained in $K,[n]$ and $G_1,G_2,\dots, G_{\delta-2}$, so $F$ can play the role of a degree $\delta$ vertex in $G$, and $K,[n]$ and $G_1,G_2,\dots, G_{\delta-2}$ can play the role of the edges incident to that vertex. Then, for each $1\le i\le \delta-2$, $E_{i+1}$ can play the role of the other endvertex of the edge corresponding to $G_i$ and $E_1$ the role of the other endvertex of the edge corresponding to $[n]$ and $\emptyset$ can play the role of the other endvertex of the edge corresponding to $K$. Finally, as all $H_j$'s contain all $E_i$'s, this can be easily extended to a copy of $P(G)$. (Note that if $\delta=2$, then there are no sets of the form $G_i$, but the proof works.)

Otherwise, suppose $F\not\subseteq K$ or equivalently $F$ contains at least one of the $E_i$'s, say $E_{1}$. Then $F$ can play the role of an edge $e^*\in E(G)$, and $E_{1}$ and $\emptyset$ can play the role of the two end-vertices of $e^*$. We can choose arbitrarily which endvertex of $e^*$ corresponds to $\emptyset$.

Observe that there is no vertex incident to all edges as otherwise we would have $\delta=1$. Also, if there exists a vertex incident to all but one edge, then either $\delta=1$ or the graph is the triangle for which one can check $\cF$ is saturating. This means we can assume that there are two disjoint edges in $E(G)\setminus \{e^*\}$.

Now one can let the remaining $v-2$ $E_i$'s play the role of the other vertices of $G$ arbitrarily. If $e=2$, then $[n]$ can play the role of the other edge and we are done. 
Otherwise, we need to define a mapping from $\{e:e\in E(G)\setminus e^*\}$ to $\cF_1$ such that if $e=u_\alpha u_\beta$ and $E_\alpha,E_\beta$ play the role of $u_\alpha,u_\beta$, then $e$ is mapped neither to $[n]\setminus E_\alpha$ nor to $[n]\setminus E_\beta$. Consider the auxiliary bipartite graph with parts $\{e:e\in E(G)\setminus e^*\}$ and $\cF_1$ such that for an edge $e=u_\alpha u_\beta\in E(G)$, the vertex $e$ is connected to $S\in \cF_1$ if and only if $E_\alpha \cup E_\beta$ is contained in $S$. By the observations in previous paragraph, we have $|N(e)|\ge \max\{|\cF_1|-2,1\}$ for any $e$, $|N(e)\cup N(e')|\ge |\cF_1|-1$ for any $e,e'$ and $|N(e)\cup N(e')|\ge |\cF_1|$ for any non-adjacent $e,e'$ and $|\cup_{e\in E(G)\setminus \{e^*\}}N(e)|=|\cF_1|$. 
These imply that Hall's condition holds and so we can match the $e_i$'s with sets of $\cF_1$. This gives a copy of $P(G)$ in $\cF\cup \{F\}$ as required.
\end{proof}

One might wonder whether the statement of Proposition \ref{keszegh} remains valid if we allow $G$ to be a multigraph. We do not know, but the construction above does not necessarily work because, if $e$ and $e'$ are parallel edges, then in the above reasoning we can have $|N(e)|=|N(e')|=|N(e)\cup N(e')|=|\cF_1|-2$. In particular, the construction above is not saturating if $G$ consists of 3 parallel edges on 2 vertices. Another problem that can occur is that for multigraphs we can have $\delta-2\ge |V|$, so we would not be able to introduce $G_{\delta-2}$.




Note that if $G$ is the cycle of length $k$, then $P(G)$ is the generalized crown poset on $2k$ elements $a_1,a_2,\dots,a_k,b_1,b_2,\dots,b_k$ with $a_i<b_i,b_{i+1}$ for $i<k$ and $a_k<b_k,b_1$. 
The special case $k=2$ gives the multigraph on two vertices with two edges, for which $P(G)$ is the so-called butterfly poset.
It is easy to check that in this case the upper bound gives the correct answer.



\section{Induced results}
In this section we only consider induced results, therefore we omit this adjective throughout the section.
We start with a simple observation that is useful to determine $\sats(n,P)$ exactly for small values.

\begin{observation}\label{obs}
	If $P$ has no largest element, then any $P$-saturating family must contain the full set, $[n]$, and any such family is also automatically saturating for $\dot P$.
	Similarly, if $P$ has no smallest element, then any $P$-saturating family must contain $\emptyset$.
\end{observation}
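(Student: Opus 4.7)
The plan is to prove the two halves of the observation separately; since complementing every set reverses containment and exchanges $\emptyset$ with $[n]$, the second half follows from the first by duality, so I focus on the case where $P$ has no greatest element.

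The key preliminary fact I would establish first is this: if $\cG\subseteq 2^{[n]}$ is an induced copy of $P$ realized by a bijection $i:P\to\cG$ and $[n]\in\cG$, say $[n]=i(p)$, then for every $q\in P$ we have $i(q)\subseteq [n]=i(p)$, which because the copy is induced forces $q\le_P p$ for all $q$; hence $p$ is the greatest element of $P$. Consequently, whenever $P$ has no greatest element, no induced copy of $P$ in $2^{[n]}$ contains $[n]$. From this, the first claim is immediate: if $\cF$ were $P$-saturating with $[n]\notin\cF$, then by maximality $\cF\cup\{[n]\}$ would contain an induced copy of $P$ that necessarily uses $[n]$ (otherwise the copy would already lie in $\cF$, violating $P$-freeness), contradicting the preliminary fact.

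For the second claim, that such an $\cF$ is automatically $\dot P$-saturating, I would check freeness and the extension property. Freeness is immediate: deleting the (unique) top element from any induced copy of $\dot P$ in $\cF$ would leave an induced copy of $P$ in $\cF$, contradicting $P$-freeness. For extension, let $G\in 2^{[n]}\setminus\cF$; note $G\ne[n]$ since $[n]\in\cF$ by the first claim. By $P$-saturation of $\cF$, the family $\cF\cup\{G\}$ contains an induced copy of $P$ that must involve $G$; by the preliminary fact this copy does not contain $[n]$. Adjoining $[n]\in\cF$ to this copy as a new top therefore produces an induced copy of $\dot P$ inside $\cF\cup\{G\}$, as required.

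There is no serious obstacle here; the one point I would want to flag carefully is that the preliminary fact genuinely uses the induced (not merely weak) nature of the copy, since $i(q)\subseteq i(p)$ translates into $q\le_P p$ only under the biconditional built into the induced definition.
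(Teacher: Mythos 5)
Your proof is correct, and it fills in exactly the argument the paper leaves implicit (the Observation is stated there without proof): no induced copy of $P$ can use $[n]$ when $P$ has no greatest element, which forces $[n]\in\cF$ by saturation, and then $[n]$ can be adjoined as a top to any copy of $P$ created by adding a new set, giving $\dot P$-saturation; the dual argument handles $\emptyset$. Your closing remark is also well taken: the step ``$i(q)\subseteq i(p)$ for all $q$ implies $p$ is the greatest element'' genuinely needs the induced definition, which is why the statement lives in the induced section of the paper.
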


We remark that possibly the following stronger statement also holds, but we could not verify either direction, except in special cases (see Theorem \ref{thm:uctp+chain}).

\begin{conjecture}
	$\sats(n,P)$ is bounded if and only if  $\sats(n,\dot P)$ is bounded.
\end{conjecture}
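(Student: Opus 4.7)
I would attack the biconditional by direction, handling the more favorable direction first.

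\emph{Forward direction,} $\sats(n,P)\le K_P\Rightarrow \sats(n,\dot P)$ bounded. If $P$ has no largest element, Observation~\ref{obs} immediately yields $\sats(n,\dot P)\le \sats(n,P)\le K_P$ and we are done. Suppose instead that $P$ has a largest element $p^*$. Take a $P$-saturating family $\cF_0\subseteq 2^{[n-1]}$ of size at most $K_P$, regard its sets as subsets of $[n]$, and set $\cF:=\cF_0\cup\{[n]\}$. This $\cF$ is $\dot P$-free: the image of the new top of any induced $\dot P$-copy must be a proper superset of every other element, and the only element of $\cF$ outside $2^{[n-1]}$ is $[n]$; so $[n]$ is forced to play the new top, which leaves the $P$-subcopy inside $\cF_0$, contradicting $P$-freeness. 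For saturation, any $G\notin\cF$ with $n\notin G$ is handled by the $P$-saturation of $\cF_0$ on $[n-1]$ together with the cap $[n]$: one obtains an induced $P$-copy in $\cF_0\cup\{G\}$ whose maximum is a proper subset of $[n]$, which extends to a $\dot P$-copy via $[n]$. The delicate case is $n\in G\subsetneq [n]$; here the natural route is to have $G$ itself play the image of $p^*$ and let $[n]$ cap it as the new top, which reduces to arranging that $\cF_0\cap 2^{G\cap[n-1]}$ contains an induced copy of $P\setminus\{p^*\}$. Securing this for every relevant $G$ is the main technical step; I would attempt it by choosing $\cF_0$ as a sufficiently ``dense'' $P$-saturating family, or by enlarging it with $O(1)$ carefully selected sets, so that the required sub-copy is available below every sufficiently rich $G$.

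\emph{Reverse direction,} $\sats(n,\dot P)\le K\Rightarrow \sats(n,P)$ bounded. The key observation is that if a $\dot P$-saturating family $\cF$ happens to be $P$-free, then it is automatically $P$-saturating, because any $\cF\cup\{G\}$ contains a $\dot P$-copy and hence a $P$-copy, which must use $G$. So it suffices to produce, from an arbitrary $\dot P$-saturating $\cF$ of bounded size, a $\dot P$-saturating $P$-free family of still bounded size. Examining the induced $P$-copies in $\cF$: the image of $p^*$ (when $P$ has a top) or each maximal image (when $P$ has no top) must be maximal in $\cF$, for otherwise a proper superset inside $\cF$ would complete a $\dot P$-copy. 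Using this, I would iteratively remove or swap out offending maximal sets, adding bounded-size ``replacements'' to restore $\dot P$-saturation while eventually destroying every $P$-copy.

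\emph{Main obstacle.} The reverse direction is the serious one. Although candidate tops of $P$-copies lie in the bounded pool of at most $K$ maximal sets of $\cF$, the number of induced $P$-copies sharing a given maximum can grow with $n$, and their interactions are subtle; deleting or replacing sets to kill them threatens to destroy $\dot P$-saturation. I see no a priori reason that $O(1)$ modifications suffice in general, and controlling both properties simultaneously is precisely the barrier that appears to have prevented the authors from verifying either direction. A successful argument would likely require a structural lemma tailored to induced saturation, generalizing the tools behind Theorem~\ref{thm:uctp+chain}.
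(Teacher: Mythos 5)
This statement is not a theorem of the paper but an open conjecture: the authors explicitly remark that they could not verify either direction, except in special cases (Theorem~\ref{thm:uctp+chain}). Your proposal does not close it either, and in fact both of your directions contain genuine gaps beyond the ones you acknowledge. In the forward direction, the only part that goes through is the case where $P$ has no largest element, which is exactly the paper's Observation~\ref{obs}. When $P$ has a largest element $p^*$, your construction $\cF=\cF_0\cup\{[n]\}$ with $\cF_0\subseteq 2^{[n-1]}$ a $P$-saturating family genuinely fails, not merely in a ``delicate case'': take $P=C_3$ and $\cF_0=\{\emptyset,[n-1]\}$, which is $C_3$-saturating inside $2^{[n-1]}$; then $\cF=\{\emptyset,[n-1],[n]\}$ is not $\dot P$-saturating ($\dot P=C_4$), since adding $G=\{n\}$ creates no induced $C_4$. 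Your fallback --- choose $\cF_0$ ``sufficiently dense'' or augment it by $O(1)$ sets so that an induced copy of $P\setminus\{p^*\}$ sits below every $G$ containing $n$ --- is precisely the unresolved content of the conjecture, not a technical step one can defer. (Also, your freeness argument asserts that $[n]$ is forced to be the image of the new top of a $\dot P$-copy; that is false, since the copy could avoid $[n]$ entirely, though this case is easily repaired because an induced $\dot P$-copy inside $\cF_0$ would restrict to an induced $P$-copy, contradicting $P$-freeness.)

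In the reverse direction, your opening observation is correct (a $\dot P$-saturating family that happens to be $P$-free is automatically $P$-saturating, since an induced $\dot P$-copy restricts to an induced $P$-copy which must use the added set), but everything after it is a heuristic: you give no argument that a bounded-size $\dot P$-saturating family can be modified by boundedly many deletions and insertions so as to become $P$-free while remaining $\dot P$-saturating, and you concede as much. So the proposal reduces the conjecture to two statements that are each as hard as the conjecture itself; it proves nothing beyond the trivial subcase already recorded in Observation~\ref{obs}. Since the paper offers no proof to compare against, the honest assessment is that the problem remains open and your plan, while it identifies the right obstacles, does not overcome them.
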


\subsection{Dichotomy}

Call two families over $[n]$ \emph{poset-isomorphic} if there is a bijection between their sets that preserves union, intersection and complement.
Such a bijection is called a \emph{poset-isomorphism}.

\begin{definition}
	For a family $\cF\subseteq 2^{[n]}$, let $\cA(\cF)$ be the algebra it generates using the operations union, intersection, complement.	
	Write $\cF\cong_1 \cF'$ if the following are satisfied.
	\begin{itemize}
		\item $\cF$ and $\cF'$ are isomorphic as posets,
		\item $\cF$ and $\cF'$ have a poset-isomorphism which induces an isomorphism of $\cA(\cF)$ and $\cA(\cF')$,
		\item singletons in $\cA(\cF)$ correspond to singletons in $\cA(\cF')$, i.e., their one element sets are the same.
	\end{itemize}

	Note that the base sets of $\cF$ and $\cF'$ may have different sizes.
	Write $\cF\gtrsim_1 \cF'$ if $\cF$ and $\cF'$ are isomorphic posets, their isomorphism induces an isomorphism of $\cA(\cF)$ and $\cA(\cF')$, and all singleton atoms of $\cA(\cF)$ are also singleton atoms in $\cA(\cF')$.
\end{definition}

For example, consider the following families over the 6-element base set $\{1,...,6\}$:\\ $\cF_1=\{\{1\}, \{1,2\}\}$, $\cF_2=\{\{1\}, \{1,3\}\}$, $\cF_3=\{\{1\}, \{1,2,3\}\}$, $\cF_4=\{\{1\}, \{1,2,3,4\}\}$.\\
We have $\cF_1\cong_1 \cF_2\lesssim_1\cF_3\cong_1\cF_4$, while $\cF_5=\{\{1\}, \{2\}\}$ and $\cF_6=\{\{1,3\}, \{2,3\}\}$ would not be in relation with any of the other families, or with each other.

\begin{proposition}\label{O1G}
	If $\cF\gtrsim_1 \cF'$ and $\cF$ is $P$-saturating, then $\cF'$ is also $P$-saturating.
	Thus, if $\cF\cong_1 \cF'$, then $\cF$ is $P$-saturating if and only if $\cF'$ is $P$-saturating.
\end{proposition}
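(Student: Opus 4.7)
The plan is to work at the level of atoms of the Boolean algebras. Let $A_1,\dots,A_k$ and $A'_1,\dots,A'_k$ be the atoms of $\cA(\cF)$ and $\cA(\cF')$ respectively, labeled so that the given algebra isomorphism sends $A_i\leftrightarrow A'_i$. Under this isomorphism, every $H\in\cA(\cF)$ has the form $H=\bigcup_{i\in I}A_i$ and its image is $\bigcup_{i\in I}A'_i$, so subset relations inside the algebras are completely encoded by inclusions of index sets.

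First I would check that $\cF'$ is induced $P$-free. Any induced copy $\cQ'\subseteq\cF'$ pulls back through the poset-isomorphism to a subfamily $\cQ\subseteq\cF$; since the map preserves unions (and hence containment, as $X\subseteq Y\iff X\cup Y=Y$), $\cQ$ is an induced copy of $P$ in $\cF$, contradicting $P$-freeness. The substantive step is the saturation property: given any $G'\in 2^{[n']}\setminus\cF'$, I will build a companion set $G\subseteq[n]$ by specifying, for each $i$, its trace on $A_i$:
\begin{itemize}
\item if $G'\cap A'_i=A'_i$, put $G\cap A_i:=A_i$;
\item if $G'\cap A'_i=\emptyset$, put $G\cap A_i:=\emptyset$;
\item otherwise $\emptyset\subsetneq G'\cap A'_i\subsetneq A'_i$, so $|A'_i|\ge 2$; the $\gtrsim_1$ hypothesis (via its contrapositive) forces $|A_i|\ge 2$, so I may pick an arbitrary proper nonempty $B_i\subsetneq A_i$ and set $G\cap A_i:=B_i$.
\end{itemize}

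The two key claims are that $G\notin\cF$ and that an induced copy of $P$ through $G$ in $\cF\cup\{G\}$ (which exists by saturation of $\cF$) transfers to one through $G'$ in $\cF'\cup\{G'\}$. For the first, if $G\in\cF$ then $G\in\cA(\cF)$ is a union of atoms, so the third case never occurred, and the same description shows $G'$ is a union of atoms of $\cA(\cF')$, hence lies in $\cA(\cF')$; the algebra isomorphism then sends $G\mapsto G'$, and since it restricts to the poset-isomorphism $\cF\to\cF'$, we conclude $G'\in\cF'$, a contradiction. For the second, let $\{G,H_1,\dots,H_m\}$ be the induced copy and $H'_j\in\cF'$ the image of $H_j$. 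The relations among the $H_j$'s transfer verbatim, so it remains to verify $H_j\subseteq G\iff H'_j\subseteq G'$ and $G\subseteq H_j\iff G'\subseteq H'_j$. Writing $H_j$ as a union of atoms, $H_j\subseteq G$ reduces atom-by-atom to ``$A_i\subseteq G$'', which by construction is equivalent to $A'_i\subseteq G'$, and aggregates back to $H'_j\subseteq G'$. Dually, $G\subseteq H_j$ holds iff whenever $G\cap A_i\neq\emptyset$ we have $A_i\subseteq H_j$; by construction $G\cap A_i\neq\emptyset\iff G'\cap A'_i\neq\emptyset$, and $A_i\subseteq H_j\iff A'_i\subseteq H'_j$ by the algebra isomorphism, yielding $G'\subseteq H'_j$. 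The second statement of the proposition follows by applying the first in both directions, since $\cong_1$ implies $\cF\gtrsim_1\cF'$ and $\cF'\gtrsim_1\cF$.

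The main obstacle I expect is bookkeeping in the third atom-case: one has to confirm that the arbitrariness of the choice $B_i$ is harmless. It is harmless precisely because $B_i$ only ever needs to interact with traces $H_j\cap A_i\in\{\emptyset,A_i\}$ of atom-unions, so all comparisons between $G$ and the $H_j$'s collapse to ``$G\cap A_i$ is empty or not'' and ``$G\cap A_i$ is all of $A_i$ or not''—conditions that match up symmetrically between $\cF$ and $\cF'$ by the construction. The $\gtrsim_1$ hypothesis enters only once, but crucially, to guarantee that the intermediate choice $B_i$ exists whenever case (iii) is invoked.
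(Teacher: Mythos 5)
Your proof is correct and follows essentially the same approach as the paper's: you mimic the added set atom-by-atom via the isomorphism of $\cA(\cF)$ and $\cA(\cF')$, invoking the singleton-atom condition of $\gtrsim_1$ exactly where the paper does (to imitate a set that properly cuts an atom). The only difference is presentational — you argue directly and transfer a copy of $P$ from $\cF\cup\{G\}$ to $\cF'\cup\{G'\}$, while the paper argues by contradiction in the reverse direction — and you spell out details (such as $G\notin\cF$ and the atom-wise inclusion checks) that the paper leaves implicit.
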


Note that the converse is not necessarily true; it can happen that $\cF\gtrsim_1 \cF'$ and $\cF'$ is $P$-saturating but $\cF$ is not.
For example, over the 4-element base set $\{1,2,3,4\}$ the family $\cF'=\{\{1\},\{2,3,4\}\}$ is $C_2$-saturating, but $\cF=\{\{1,2\},\{3,4\}\}$ is not, as we can add $\{2,3\}$.

\begin{proof}
	Suppose that $\cF'$ is not $P$-saturating.
	Since $\cF$ and $\cF'$ are isomorphic, $\cF'$ is $P$-free, so the saturation needs to fail, i.e., for some $F'\notin \cF'$ there is no copy of $P$ in $\cF'\cup\{F'\}$.
	Using the isomorphism between $\cF$ and $\cF'$, and $\cA(\cF)$ and $\cA(\cF')$, we can create an $F$ that is in the same relation to the sets of $\cF$ as $F'$ is to the sets of $\cF'$:
	if $F'$ is disjoint from an atom, make $F$ also disjoint from the image of that atom;
	if $F'$ contains an atom, make $F$ also contain the image of that atom; 
	if $F'$ properly cuts into an atom, make $F$ also cut into the image of that atom.
	Here we use that if an atom is non-singleton in $\cA(\cF')$, it is also non-singleton in $\cA(\cF)$.
	Thus, since $\cF'\cup\{F'\}$ is $P$-free, so is $\cF\cup\{F\}$, contradicting the assumption.
\end{proof}

\begin{corollary}\label{cor:const}
	If for a poset $P$ there exists a $P$-saturating family $\cF$ such that some atom of $\cA(\cF)$ is not a singleton, i.e., some two elements are not separated by $\cF$, then $\sats(n,P)\le |\cF|$, and so $\sats(n,P)=O(1)$.
\end{corollary}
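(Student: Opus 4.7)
The plan is to start from a fixed $P$-saturating family $\cF$ with a non-singleton atom $A$ of $\cA(\cF)$ and, for each large enough $n$, produce a $P$-saturating family $\cF'\subseteq 2^{[n]}$ of the same size by blowing up $A$. Since $|\cF|$ does not depend on $n$, this will give $\sats(n,P)\le |\cF|$ whenever the blow-up is available; for the finitely many remaining small values of $n$ the trivial bound $\sats(n,P)\le 2^n$ is itself a constant, so $\sats(n,P)=O(1)$ in any case.

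Concretely, say $\cF\subseteq 2^{[m]}$ and, after relabelling if necessary, $m\in A$. For $n\ge m$ I would define $\varphi:\cF\to 2^{[n]}$ by $\varphi(F)=F$ when $m\notin F$ and $\varphi(F)=F\cup\{m+1,\dots,n\}$ when $m\in F$, and set $\cF':=\varphi(\cF)$. Clearly $\varphi$ is a bijection, so $|\cF'|=|\cF|$.

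The main step is to verify that $\cF\gtrsim_1\cF'$ in the sense of the definition preceding Proposition~\ref{O1G}. The verification is routine: attaching the fixed block $\{m+1,\dots,n\}$ exactly to those sets that contain $m$ commutes with union, intersection, and complementation (the latter taken in $[m]$ versus $[n]$), so $\varphi$ is a poset-isomorphism whose extension to the algebras is an isomorphism $\cA(\cF)\to\cA(\cF')$. Under this extension, the atom $A$ is enlarged to $A\cup\{m+1,\dots,n\}$, while every other atom of $\cA(\cF)$ is preserved; in particular, singleton atoms of $\cA(\cF)$ remain singleton atoms of $\cA(\cF')$, which is exactly what $\cF\gtrsim_1\cF'$ demands.

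With $\cF\gtrsim_1\cF'$ in hand, Proposition~\ref{O1G} immediately yields that $\cF'$ is $P$-saturating, giving $\sats(n,P)\le |\cF'|=|\cF|$ for all sufficiently large $n$. The only point requiring care is the atom bookkeeping in the $\gtrsim_1$ verification, but because the blow-up perturbs only the atom $A$, this reduces to a short mechanical check rather than a genuine obstacle.
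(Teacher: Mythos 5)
Your proposal is correct and follows essentially the same route as the paper: the paper's proof likewise enlarges the non-singleton atom to obtain a family $\cF'$ over $[n]$ with $\cF\gtrsim_1\cF'$ and the same size, and then invokes Proposition~\ref{O1G} to conclude $\cF'$ is $P$-saturating. Your explicit map $\varphi$ (attaching $\{m+1,\dots,n\}$ to the sets containing the chosen element of the non-singleton atom) is just a concrete realization of that blow-up, and your atom bookkeeping and treatment of small $n$ are fine.
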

\begin{proof}
	We can make the non-singleton atom of $\cA(\cF)$ arbitrarily large to obtain some $\cF'\lesssim_1 \cF$ over $[n]$ that has the same size as $\cF$. By Proposition \ref{O1G}, $\cF'$ is $P$-saturating. 
\end{proof}

The contrapositive says that if $\sats(n,P)\ne O(1)$, then all atoms are singletons in any $P$-saturating family, i.e., it is separating, but then its size is at least $\log_2 n$, which proves Theorem \ref{dich}. This last thought was used in Theorem 8 of \cite{F7} to obtain lower bounds on $\sats(n,P)$ with $P$ satisfying a property that we discuss in the next subsection.

\subsection{UCTP posets with top chain}\label{sec:uctp}

In \cite{F7}, a poset property called the \emph{unique cover twin property} (UCTP) was defined. In a poset $y$ covers $x$ if there is no $z$ with $x<z<y$.
A poset $P$ is said to have UCTP if whenever $y$ covers $x$, then there is a $z$ that is comparable with one of $x$ and $y$ and is incomparable to the other one. That is either $x$ is covered by not only $y$ and thus the covering of $x$ by $y$ is not `unique', or $x$ is not the only one covered by $x$ and thus $x$ has a `twin' covered by $y$.
They have shown that for any poset $P$ with UCTP, $\sats(n,P)$ is unbounded.
We extend their theorem for a slightly more general class of posets.

A poset is called \emph{UCTP with top chain} if it consists of two parts: a poset $P_0$ that has UCTP and a chain such that every element of $P_0$ is smaller than every element of the chain.
For technical reasons, we also require $|P_0|\ge 2$ (i.e., the poset itself is not a chain).
For example, the poset on four elements defined by $a<c;b<c;c<d$ (an upside-down `Y') is a UCTP with top chain for which it was not known before whether it has an unbounded induced saturation function. 

\begin{theorem}\label{thm:uctp+chain}
	Let $P$ be a poset that has UCTP with top chain. Then any $P$-saturating family is separating, thus $\sats(n,P)\ge \log_2 n$.
\end{theorem}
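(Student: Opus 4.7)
The plan is to reduce to the UCTP case proved by Ferrara et al.~\cite{F7}. Write $P = P_0 + C$ where $P_0$ has UCTP, $|P_0|\ge 2$, and $C = c_1 < \dots < c_k$ is the top chain with $k\ge 1$ (the case $k=0$ is already in \cite{F7}). Assume for contradiction that some $P$-saturating $\cF \subseteq 2^{[n]}$ is not separating, and let $A$ be a non-singleton atom of $\cA(\cF)$. Applying Proposition~\ref{O1G} to shrink the other non-singleton atoms to singletons and to shrink $A$ itself down, I may assume that $A$ is the unique non-singleton atom of $\cA(\cF)$ and $|A| = 2$, say $A = \{u,v\}$.

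The first step is to locate a ``top-chain witness'' above $A$ that lives in $\cF$. Since $\{u\}\notin \cF$, saturation yields an induced copy $\phi\colon P \to \cF \cup \{\{u\}\}$ that uses $\{u\}$; set $p := \phi^{-1}(\{u\})$. A counting argument rules out $p \in C$: if $p = c_j$, then all $|P_0|\ge 2$ elements of $P_0$ would have to map injectively into the only proper subset of $\{u\}$, namely $\emptyset$, which is impossible. Hence $p \in P_0$, each $D_i := \phi(c_i)$ lies in $\cF$, and $\{u\} \subsetneq D_1 \subsetneq \dots \subsetneq D_k$. Atomicity of $A$ then forces $v \in D_1$, and hence $A \subseteq D_1$.

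Next I would choose $D_1$ to be inclusion-minimal among all sets in $\cF$ that contain $A$ and serve as the bottom of a chain of length $k$ in $\cF$, and define $\cF_0 := \{F \in \cF : F \subsetneq D_1\}$, viewed as a family of subsets of $D_1$. The strategy is to prove that $\cF_0$ is $P_0$-saturating over $D_1$. The $P_0$-freeness of $\cF_0$ is immediate: any induced $P_0$-copy inside $\cF_0$ concatenated with the chain $D_1 \subsetneq \dots \subsetneq D_k$ would give an induced $P$ in $\cF$, contradicting $P$-freeness. Once saturation is established, the non-singleton atom $A \subseteq D_1$ remains a non-singleton atom of $\cA(\cF_0)$, so applying \cite{F7}'s UCTP result to $P_0$ produces the desired contradiction.

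The main obstacle will be verifying $P_0$-saturation of $\cF_0$. For any $G' \subsetneq D_1$ with $G' \notin \cF$, saturation of $\cF$ for $P$ yields an induced $P$-copy $\psi$ in $\cF \cup \{G'\}$ using $G'$, and one has to extract from it an induced $P_0$-copy lying in $\cF_0 \cup \{G'\}$. The natural sufficient condition is $\psi(c_1) \supseteq D_1$, which places the entire $P_0$-part of $\psi$ strictly below $D_1$. I would split on the role of $G'$ in $\psi$: if $G'$ plays an element of $P_0$ then the top chain of $\psi$ lies wholly in $\cF$, and the minimality of $D_1$ together with the same counting argument as above should force $\psi(c_1) \supseteq D_1$; if instead $G'$ plays an element of $C$, then one uses UCTP of $P_0$ to modify $\psi$ and push $G'$ down into the $P_0$-part. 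This case analysis, particularly the second sub-case where the top chain of $\psi$ must be reorganised, is where the bulk of the technical work lies and where the interaction between the UCTP hypothesis on $P_0$ and the rigid chain $C$ on top becomes essential.
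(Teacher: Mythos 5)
Your opening steps are fine: $\{u\}\notin\cF$, so saturation gives an induced copy of $P$ through $\{u\}$, the counting argument correctly rules out $\{u\}$ playing a chain role, and this produces a $k$-chain $D_1\subsetneq\dots\subsetneq D_k$ in $\cF$ with $\{u,v\}\subseteq D_1$; the $P_0$-freeness of $\cF_0=\{F\in\cF: F\subsetneq D_1\}$ also checks out. But the pivot of your plan --- that $\cF_0$ is $P_0$-saturating in $2^{D_1}$ --- is not proved, and the sketch does not close it. First, your ``sufficient condition'' has the containment backwards: if $\psi(c_1)\supseteq D_1$, the $P_0$-part of $\psi$ lies below $\psi(c_1)$, not below $D_1$; you would need $\psi(c_1)\subseteq D_1$. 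Second, neither version is forced by the minimality of $D_1$: $D_1$ is minimal only among sets \emph{containing the unseparated pair} that start a $k$-chain, whereas $\psi(c_1)$ need not contain $u$ or $v$ at all (your $G'$ is an arbitrary missing subset of $D_1$), so minimality gives no control over $\psi(c_1)$, which may be incomparable to $D_1$ or a strict superset, with the $P_0$-part of $\psi$ using sets unrelated to $D_1$. The sub-case in which $G'$ plays a chain element (``use UCTP to push $G'$ down'') is not argued at all. Third, saturation of $\cF_0$ in $2^{D_1}$ also requires that adding $D_1$ itself (which is not in $\cF_0$) creates a copy of $P_0$; this cannot be extracted from saturation of $\cF$, since $D_1\in\cF$. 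In short, the reduction rests on a localization claim that is unverified and quite possibly false, and which is in any case far stronger than what the theorem needs.

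For comparison, the paper's proof never makes such a claim: after fixing the minimal $D_1$ (there called $S_{xy}$), it adds the \emph{single} set $S_x=D_1\setminus\{v\}$. Because this set differs from $D_1$ in one unseparated element, every member of $\cF$ relates to $S_x$ and to $D_1$ essentially identically, and the saturation copy through $S_x$ can be fully analyzed: if $S_x$ plays a top-chain role, the upper part of that chain is remapped onto the $k$-chain above $D_1$, producing a copy of $P$ inside $\cF$; if it plays a $P_0$-role, the UCTP ``twin'' of the covering pair $S_x\subsetneq D_1$, combined with the minimality of $D_1$, yields either a non-induced copy or again a copy inside $\cF$. Your generic $G'$ lacks this one-element-difference structure, which is exactly why your extraction step stalls; restricting your argument to the single set $D_1\setminus\{v\}$ leads back to the paper's direct contradiction rather than to a saturation statement for all of $2^{D_1}$.
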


\begin{proof}
	If the UCTP part of the poset does not have a largest element, then imagine that the smallest element of the chain belongs to it (this preserves the UCTP).
	From now on denote the UCTP part with $P_0$, its top element with $t$ and suppose that the top chain from $t$ has $k$ elements (including $t$), so for example for an upside-down `Y' we have $k=2$.
	Note that $P_0$ has at least two elements apart from $t$. 
	
	For a contradiction, suppose that $x$ and $y$ are not separated by the family $\cF$.
	For any set $S$, denote by $S_x=S\cup\{x\}$ and $S_{xy}=S\cup\{x,y\}$.
	Similarly, let $\cF_{xy}=\{F\in\cF\mid x,y\in F\}$, and also  $\cF_{0}=\{F\in\cF\mid x,y\notin F\}$. Note that $\cF=\cF_{0}\cup \cF_{xy}$.
	
	If $\cF_{xy}$ is $C_k$-free, then add $\{x\}$ to the family. As in $\cF$, only elements of $\cF_{xy}$ are above $x$, in a copy of $P$ no chain of length $k$ is above $x$. Thus, if we get a copy of $P$, $x$ needs to be in the top chain part, but that is impossible, since it does not have two elements under it.
	
	Otherwise, let $S_{xy}$ be a minimal set in $\cF_{xy}$ that is part of a chain on $k$ elements from $\cF_{xy}$.
	Add $S_x$ to the family. $S_x$ is in a copy of $P$ in which $S_x$ cannot be in the top chain $C_k$, as then we could remap the part of the chain starting from $S_x$ into a chain starting from $S_{xy}$. This would still be a copy of $P$ in $\cF$, contradicting that $\cF$ is $P$-free.
	Finally, if $S_x$ is in $P_0\setminus\{t\}$, then we again get a contradiction, just like in \cite{F7}. First, if $S_{xy}$ is in a copy of $P$, then, as the set $S_{xy}$ covers $S_x$, $S_x$ must contain its `twin' in $P$ (that exists due to the UCTP), so the copy of $P$ is not induced. Second, if $S_{xy}$ is not in $P$, then we could remap $S_x$ into $S_{xy}$. As they are in the same relation to all other sets of $\cF=\cF_{0}\cup \cF_{xy}$, we get a copy of $P$ in $\cF$, contradicting that $\cF$ is $P$-free.
\end{proof}

We note that if we reverse all the relations in a poset with UCTP we get another poset that has UCTP. This implies, that Theorem \ref{thm:uctp+chain} we can exchange `UCTP with top chain' by `UCTP with bottom chain', whose definition  is similar, just that instead of putting a chain above all elements, we put the chain below all elements.

\subsection{Posets with one long chain}

Let us define the \textit{generalized harp poset} $H_{P_1,P_2,\dots,P_k}$ to be the poset with a smallest element $u$ and a largest element $v$ such that $H_{P_1,P_2,\dots,P_k}\setminus \{u,v\}$ is $P_1+P_2+\dots+P_k$, the disjoint union of the posets $P_1,P_2,\dots,P_k$.

We denote by ${\cal M} _{l,k}$ the union of the middle $k$ levels\footnote{If there are two sets of middle $k$ levels then, to avoid ambiguity, let ${\cal M} _{l,k}$ denote the lower set of middle $k$ levels.} of ${\cal B}_l$ on base set $[l]$. Denote by $e^*(P)$ the maximum number $k$ such that the union of any $k$ complete and consecutive levels of any Boolean lattice is $P$-free. In particular ${\cal M} _{\ell,k}$ is $P$-free for every $\ell$. The converse is also essentially true:

\begin{observation}\label{obs:estar}
	If $e^*(P)=k$ then ${\cal M} _{\ell,k+1}$ contains $P$ for every big enough $\ell$. 
\end{observation}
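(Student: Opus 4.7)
The plan is a short shifting argument. First I would unwind the definition of $e^*(P)$: the fact that $e^*(P)=k$ forces the existence of some Boolean lattice $B_{\ell'}$ and some index $j$ such that the union of the $k+1$ consecutive levels $L_j \cup L_{j+1}\cup\dots\cup L_{j+k}$ of $B_{\ell'}$ contains an (induced) copy of $P$; otherwise $k+1$ would also satisfy the defining property of $e^*(P)$, contradicting its maximality.

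Next, I would use the standard level-shifting embedding. For any $\ell\ge\ell'$ and any fixed $S\subseteq\{\ell'+1,\dots,\ell\}$, the map $f:2^{[\ell']}\to 2^{[\ell]}$ defined by $f(A)=A\cup S$ is an induced poset embedding, and a set of size $t$ in $B_{\ell'}$ is sent to a set of size $t+|S|$ in $B_\ell$. Hence $f$ sends the block of levels $L_j,\dots,L_{j+k}$ of $B_{\ell'}$ into the block of levels $L_{j+|S|},\dots,L_{j+k+|S|}$ of $B_\ell$. The image of the copy of $P$ under $f$ is again an induced copy of $P$.

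Finally I would choose $|S|$ so that this shifted block of $k+1$ consecutive levels coincides with (or is contained in) the middle $k+1$ levels ${\cal M}_{\ell,k+1}$ of $B_\ell$. Concretely, choosing $|S|$ to be the integer such that $j+|S|$ equals the bottom index of the middle $k+1$ levels does the job. For $\ell$ sufficiently large relative to $\ell'$, $j$ and $k$, the required value of $|S|$ lies in the admissible range $[0,\ell-\ell']$, so a suitable $S$ exists, completing the proof.

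The only real nuisance (rather than an obstacle) is the bookkeeping for the convention on ${\cal M}_{\ell,k+1}$ when there is no unique middle block; here the footnote picks the lower choice, and one adjusts $|S|$ by a parity correction accordingly. Once that detail is pinned down, the argument is immediate.
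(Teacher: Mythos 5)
Your proposal is correct and takes essentially the same route as the paper: your map $A\mapsto A\cup S$ is precisely the paper's embedding of $B_{\ell'}$ as an interval $\{H: A\subseteq H\subseteq B\}$ with $|B|-|A|=\ell'$, with the shift $|S|$ chosen so that the block of $k+1$ consecutive levels containing $P$ lands on the middle $k+1$ levels of $B_\ell$. (Your indexing is in fact a bit more careful than the paper's, whose proof writes $a,\dots,a+k-1$ and $\cM_{\ell,k}$ where $k+1$ levels, i.e.\ $\cM_{\ell,k+1}$, are meant.)
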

\begin{proof}
	If $e^*(P)=k$ then by definition for some $\ell'$ and $a$, the levels $a,a+1,\dots,a+k-1$ of $B_{\ell'}$ contain $P$. In this case for any $\ell\ge 3\ell'$ the family $\cM_{\ell,k}$ also contains $P$ as one can consider level $a,a+1,\dots,a+k-1$ of $\{H: A\subseteq H\subseteq B\}$ with $A,B\subseteq [\ell]$, $|B|-|A|=\ell'$ (which is isomorphic as a poset to $B_{\ell'}$) such that $|A|+a$ equals the rank of the lowest level of $\cM_{\ell,k}$.
\end{proof}

\begin{theorem}\label{thm:chain+shallow}
If $P$ is a poset with $e^*(P)\le k-2$, then $\sats(n,C_k+P)\le K_P$ for some constant independent of $n$.
\end{theorem}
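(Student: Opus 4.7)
The plan is to apply Corollary~\ref{cor:const}: to bound $\sats(n,C_k+P)$ by a constant, it suffices to construct a $(C_k+P)$-saturating family $\cF$ over some ground set of constant size in which some atom of $\cA(\cF)$ is non-singleton. Inflating that atom in the manner of Corollary~\ref{cor:const} then produces a saturating family of the same cardinality over $[n]$.

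To build $\cF$, I would invoke Observation~\ref{obs:estar}: since $e^*(P)\le k-2$, there is a constant $\ell=\ell(P,k)$ such that the union $\cM_{\ell,k-1}$ of some $k-1$ consecutive middle levels of $B_\ell$ contains an induced copy $\cP_0$ of $P$. Work over the ground set $[m]=[\ell+1]$ with the two elements $\ell$ and $\ell+1$ designated as indistinguishable, and consider the sub-lattice $\cL\subseteq 2^{[m]}$ of sets containing both or neither of $\ell,\ell+1$; this $\cL$ is poset-isomorphic to $B_\ell$. Embed $\cP_0$ into the middle $k-1$ levels of $\cL$ and take $\cF$ to be any maximal $(C_k+P)$-free family in $\cL$ containing this embedded $\cP_0$. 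By construction $\{\ell,\ell+1\}$ is a non-singleton atom of $\cA(\cF)$, and $|\cF|\le 2^\ell=K_P$.

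The central task is to verify that $\cF$ is $(C_k+P)$-saturating in the whole of $2^{[m]}$, not merely inside $\cL$. For $F\in\cL\setminus\cF$, saturation is immediate from the maximality of $\cF$ inside $\cL$. For $F\in 2^{[m]}\setminus\cL$, i.e., an $F$ that properly cuts the atom $\{\ell,\ell+1\}$, a separate argument is needed. Such an $F$ is automatically incomparable to every set of $\cF$ that contains $\{\ell,\ell+1\}$ but misses $F\cap\{\ell,\ell+1\}$. Viewed through the quotient $\cL\cong B_\ell$, adding $F$ effectively inserts a new rank level between two existing ones, so the maximum chain length in $\cF\cup\{F\}$ grows from $k-1$ to $k$. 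The embedded copy $\cP_0$ remains available as the $P$-part; to form an induced $C_k+P$, one selects a $(k-1)$-chain in $\cF$ which becomes a $k$-chain through $F$ and whose elements are elementwise incomparable to $\cP_0$.

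The principal obstacle is arranging the existence of such an incomparable chain, which may require enlarging $\ell$ by a constant depending on $|P|$ and carefully choosing the embedding of $\cP_0$ inside $\cL$---placing it, for instance, in a sub-lattice disjoint from a distinguished chain of length $k-1$. The slack afforded by $e^*(P)\le k-2$---that $P$ fits into only $k-1$ levels---is precisely what supplies the extra levels of chain-room needed on the other side of $\cL$. Once this verification is in place, Corollary~\ref{cor:const} yields $\sats(n,C_k+P)\le|\cF|\le K_P$, as required.
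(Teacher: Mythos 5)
Your overall direction---use Observation~\ref{obs:estar} to place an induced copy of $P$ inside $k-1$ middle levels of a constant-size lattice, and exploit a non-separated pair of ground elements so that the family works for all $n$---is in the same spirit as the paper, which builds its family out of $\cM_{\ell,k-1}$ and leaves all of $[n]\setminus[\ell]$ unseparated. But your proof has a genuine gap exactly where you yourself flag ``the principal obstacle,'' and the sketch you give there rests on a false premise. You take $\cF$ to be \emph{any} maximal $(C_k+P)$-free family in the sublattice $\cL\cong B_\ell$ containing the embedded copy $\cP_0$, and then argue that adding a set $F$ which cuts the atom $\{\ell,\ell+1\}$ raises the maximum chain length in $\cF\cup\{F\}$ from $k-1$ to $k$. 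That is not true: a maximal $(C_k+P)$-free family in $\cL$ is in no way confined to $k-1$ levels---a full chain of $\cL$ is itself $(C_k+P)$-free (nothing is incomparable to it, and $P$ is nonempty), so maximality will typically force many long chains, $\emptyset$, and $[m]$ into $\cF$. With an arbitrary maximal completion you have no control over which chains exist, where the induced copies of $P$ sit, or whether a $k$-chain through the new set $F$ can be chosen elementwise incomparable to a copy of $P$; this is precisely the verification that is never carried out. Worse, if saturation fails for some atom-cutting $F$, you cannot repair it by enlarging $\cF$, since adding such an $F$ separates $\ell$ from $\ell+1$ and destroys the non-singleton atom that Corollary~\ref{cor:const} needs.

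The paper's proof closes this gap with a fully explicit construction rather than an arbitrary maximal family: it takes $\cM_{\ell,k-1}$ with $\ell\ge 10k$ chosen so that even $\cM_{\ell-2,k-1}$ contains $P$, keeps the sets avoiding the element $1$ as they are, lifts each set containing $1$ by the whole tail $[n]\setminus[\ell]$, and adds $\emptyset$ and $[n]$. This ``sheared'' structure guarantees that $\cF\setminus\{\emptyset,[n]\}$ has no $k$-chain (so $\cF$ is $(C_k+P)$-free), and for every $G\notin\cF$ a case analysis produces a $k$-chain through $G$ together with coordinates $x$ in the bottom set and $y$ outside the top set of that chain, so that $\{F\in\cF: x\notin F,\ y\in F\}$ is poset-isomorphic to $\cM_{\ell-2,k-1}$ and hence supplies a copy of $P$ incomparable to the entire chain. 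Some such explicit mechanism (or a substantially more careful choice of $\cF$ than ``any maximal family'') is needed; as written, your argument does not establish saturation for the sets outside $\cL$, which is the heart of the theorem.
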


It follows from Observation \ref{obs} that we also have $\sats(n,H_{C_k,P})\le K_P$.



\begin{proof}
	If $P$ is empty, then the statement is true by the result of \cite{G6} for chains, so from now on we will assume that $P$ is non-empty.
	For any pair $k,\ell$ of positive integers let $\cF=\cF^0_{\ell,k-1}\cup\cF^1_{\ell,k-1}\cup \{\emptyset,[n]\}$ with
$$ \cF^0_{\ell,k-1}:=\{F: 1\notin F\in \cM_{\ell,k-1}\}, \hskip 0.2truecm \cF^1_{\ell,k-1}:=\{F\cup ([n]\setminus [\ell]): 1\in F\in \cM_{\ell,k-1}\}.$$

We choose $l$ big enough so that $l\ge 10k$ and $\cM_{\ell-2,k-1}$ contains $P$. This can be done by Observation \ref{obs:estar}.

Observe that $\cF\setminus \{\emptyset,[n]\}$ does not contain a chain of size $k$. Indeed, the mapping $f:\cF\rightarrow B_\ell$ with $f(F)=F\cap [\ell]$ is a poset-isomorphism from $\cF$ to $f(\cF)$, and $f(\cF\setminus \{\emptyset,[n]\})=\cM_{\ell,k-1}$.
Thus the only way to embed $C_k$ would be if its top or bottom element was mapped to $[n]$ or $\emptyset$, but this contradicts that there are no relations between $C_k$ and $P$.
Therefore $\cF$ is induced $(C_k+P)$-free.

Next we show that for any $G\in B_n\setminus \cF$, the family $\cF\cup \{G\}$ contains an induced copy of $C_k+P$. We consider several cases. In all cases, we will find a $k$-chain in $\cF \cup \{G\}$ such that for some element $x$ of the smallest set of the chain and an element $y$ that does not belong to the largest set of the chain, the family $\cF^{\overline{x},y}=\{F\in\cF:x\notin F,y\in F\}$ is isomorphic as a poset to $\cM_{\ell-2,k-1}$. If so, then by virtue of $x$ and $y$ the chain and $\cF^{\overline{x},y}$ are incomparable and $\cF^{\overline{x},y}$ being isomorphic to $\cM_{\ell-2,k-1}$ implies that $\cF^{\overline{x},y}$ contains a copy of $P$. 

\bigskip

\textsc{Case I.} $G\cap [\ell] \in \cM_{\ell,k-1}$.

Then $10k\le \ell$ implies $4k\le |G\cap [\ell]|\le \ell-4k$, so we can fix $x$ and $y$ with $x,y\in\{2,3,\dots,l\}$, $x\in G\cap [\ell]$, $y\in [\ell]\setminus G$. Let  $M_1 \subsetneq M_2 \subsetneq \dots \subsetneq M_{k-1}$ be a chain of length $k-1$ in $\cM_{\ell,k-1}$ such that $G\cap [\ell]=M_i$ for some $i$, $x\in M_1,y\notin M_{k-1}$ and $1\notin M_{i-1}, 1\in M_{i+1}$. Such a chain exists as $4k\le |G\cap [\ell]|\le \ell-4k$ holds. If $1\notin G$, then $M_1, M_2, \dots,M_{i-1},G \cap [\ell], G, M_{i+1}\cup ([n]\setminus [\ell]), \dots, M_{k-1}\cup ([n]\setminus [\ell])$ is a $k$-chain in $\cF \cup \{G\}$ and $\cF^{\overline{x},y}$ is as desired. While if $1\in G$, then $M_1,\dots,M_{i-1},G, G \cup ([n]\setminus [\ell]), M_{i+1}\cup ([n]\setminus [\ell]), \dots, M_{k-1}\cup ([n]\setminus [\ell])$ is a $k$-chain in $\cF \cup \{G\}$ and $\cF^{\overline{x},y}$ is as desired.

\bigskip

\textsc{Case II.} $G\cap [\ell]\notin \cM_{\ell,k-1}$.

Now there are lots of chains $M_1\subsetneq M_2 \subsetneq \dots \subsetneq M_{k-1}$ in $\cM_{\ell,k-1}$ that are extendable with $G\cap [\ell]$. If we can pick $x,y\in \{2,3,\dots,\ell\}$ with $x$ belonging to all these sets and $y$ belonging to none of these sets, then we can proceed as in Case I. The only cases when we cannot pick $x$ and $y$ are $G\cap [\ell]\in\{\emptyset,\{1\},\{2,3,\dots,\ell\},[\ell]\}$.
\begin{enumerate}
	\item 
	If $G\cap [\ell]=\emptyset$ or $G\cap [\ell]=\{1\}$, then we pick $y\in\{2,3,\dots,\ell\}$ and consider a chain $M_1\subsetneq M_2 \subsetneq \dots \subsetneq M_{k-1}$ in $\cM_{\ell,k-1}$ with $1\in M_1,y\notin M_{k-1}$. Then the $k$-chain $G, M_{1}\cup([n]\setminus [\ell]), \dots M_{k-1}\cup([n]\setminus [\ell])$ is incomparable to $\{F\in \cF:1\notin F,y\in F\}$ (here we use $G\neq \emptyset$), which is isomorphic to $\cM_{\ell-2,k-1}$.
	\item 
	If $G\cap [\ell]=[\ell]$ or $G\cap [\ell]=\{2,3,\dots,\ell\}$, then we pick $x\in\{2,3,\dots,\ell\}$ and consider a chain $M_1\subsetneq M_2 \subsetneq \dots \subsetneq M_{k-1}$ in $\cM_{\ell,k-1}$ with $x\in M_1,1\notin M_{k-1}$. Then the $k$-chain $M_{1}, \dots M_{k-1},G$ is incomparable to $\{F\in \cF:1\in F,x\notin F\}$ (here we use $G\neq [n]$), which is isomorphic to $\cM_{\ell-2,k-1}$.\qedhere
\end{enumerate}
\end{proof}

\subsection{The poset $2C_2$}

In this subsection, we prove that for the poset $2C_2$ of two incomparable pairs, the induced saturation number is unbounded. More precisely, we obtain $n+2\le\sats(n,2C_2)\le 2n$. The upper bound is attained by the $2C_2$-saturating family consisting of all singletons and any maximal chain.
\vskip 0.3truecm

First we prove the upper bound. 

\begin{proposition}\label{2C2UB}
	For any integer $n\ge 3$, $\sats(n,2C_2)\le 2n$.
\end{proposition}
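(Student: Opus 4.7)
The plan is to exhibit an explicit induced $2C_2$-saturating family of size exactly $2n$. I would take
\[
\cF := \{\{i\} : i \in [n]\} \cup \{\emptyset, [1], [2], \dots, [n]\},
\]
the $n$ singletons together with the standard maximal chain. Since $\{1\}=[1]$ is counted in both parts, $|\cF|=n+(n+1)-1=2n$.

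First I would record the comparability pattern inside $\cF$: two distinct singletons are always incomparable, two chain elements are always comparable, and $\{i\}$ is comparable to $[k]$ exactly when $k=0$ or $k\ge i$. Using this, I would argue that $\cF$ contains no induced $2C_2$ by splitting on how many of the two comparable pairs of a putative $2C_2$ lie entirely inside the chain. If both do, all four sets are totally ordered. If exactly one pair lies in the chain, then the other pair must involve a singleton and one checks that one of the chain endpoints is forced to be comparable to one of the singleton-pair endpoints. If neither pair is entirely inside the chain, each pair has the form $(\emptyset,\{i\})$ or $(\{i\},[k])$; the former is bad because $\emptyset$ is comparable to everything in the other pair, and the latter produces two distinct chain elements $[k_1],[k_2]$ that are automatically comparable. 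Each case contradicts the required mutual incomparability.

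Next I would verify saturation. Given $G\in 2^{[n]}\setminus\cF$, we have $|G|\ge 2$ and $G$ is not an initial segment $[k]$. Let $a:=\max G$ and pick $b\in[a-1]\setminus G$; such a $b$ exists because $G\ne[a]$. Note $a\ge 3$: if $a=2$ then $|G|\ge 2$ combined with $\max G=2$ would force $G=[2]\in\cF$. I would then exhibit the induced $2C_2$ formed by the pairs $\{b\}\subsetneq[a-1]$ and $\{a\}\subsetneq G$. The first three sets lie in $\cF$, and the four cross-incomparabilities fall out directly: $b\ne a$ makes the two singletons incomparable, $a\notin[a-1]$ together with $|[a-1]|\ge 2$ (using $a\ge 3$) makes $\{a\}$ and $[a-1]$ incomparable, $b\notin G$ gives $\{b\}$ versus $G$, and $b\in[a-1]\setminus G$ together with $a\in G\setminus[a-1]$ gives $[a-1]$ versus $G$.

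The main obstacle will be the $2C_2$-freeness step, which is really just disciplined case bookkeeping organized by how many pairs live in the chain; the saturation half is immediate once one spots that $a=\max G$ and any missing $b<a$ supply the right four sets.
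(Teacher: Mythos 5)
Your proposal is correct and essentially the same as the paper's proof: the same family (a full chain plus all singletons, of size $2n$), with saturation witnessed by a copy of $2C_2$ built from $\{\max G\}\subsetneq G$ together with a singleton and an initial segment avoiding $G$ appropriately (the paper uses the least element missing from $G$ and the segment $\{1,2\}$ or $[\ell]$, you use an arbitrary missing $b<a$ and $[a-1]$, which is an immaterial difference). Your case analysis for $2C_2$-freeness, which the paper dismisses as clear, is also sound.
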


\begin{proof}
	The family $\cF$ consists of a full chain and the singletons. Without loss of generality, we may choose $\left\{\emptyset, \{1\}, \{1,2\}, \{1,2,3\}, \ldots, [n], \{2\}, \{3\}, \ldots, \{n\}\right\}$. It is clear that $\cF$ has no induced copy of $2C_2$. Now consider a set $S\not\in\cF$. Let $m$ be the maximum element in $S$ and let $\ell$ be the least element not in $S$. Because $S\not\in\cF$, we know that $\ell < m$. In addition, $S$ must contain an element not in $\{1,2\}$ so $2 < m$.
	
	Consider $\cF \cup \{S\}$. If $\ell \in \{1,2\}$, then an induced copy of $2C_2$ is $\left\{\{\ell\}, \{1,2\}, \{m\}, S\right\}$. If $\ell \ge 3$, then an induced copy of $2C_2$ is $\left\{\{\ell\}, \{1,2,\ldots,\ell\}, \{m\}, S\right\}$. Thus, $\cF$ is saturating induced $2C_2$-free.
\end{proof}

Let us now turn to the lower bound. For a family $\cF$ and a set $G$ let us define $\cD_{\cF}(G)=\{F\in \cF: F\subsetneq G\}$.

\begin{proposition}\label{order}
If $\cF$ is induced $2C_2$-free, then for any $F,F'\in \cF$ one of the following three possibilities hold.
\begin{itemize}
\item
$\cD_{\cF}(F)\subsetneq \cD_{\cF}(F')$,
\item
$\cD_{\cF}(F')\subsetneq \cD_{\cF}(F)$,
\item
$\cD_{\cF}(F)= \cD_{\cF}(F')$.
\end{itemize}
\end{proposition}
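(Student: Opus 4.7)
The plan is to argue by contradiction: suppose none of the three listed options holds for some pair $F, F' \in \cF$. Then $\cD_\cF(F)$ and $\cD_\cF(F')$ are incomparable as sets, so we can pick witnesses $H \in \cD_\cF(F) \setminus \cD_\cF(F')$ and $H' \in \cD_\cF(F') \setminus \cD_\cF(F)$. The goal is to show that the four sets $\{H, F, H', F'\}$ form an induced copy of $2C_2$ in $\cF$, with the two comparabilities being $H \subsetneq F$ and $H' \subsetneq F'$, contradicting the assumption that $\cF$ is induced $2C_2$-free.

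First I would dispose of degenerate cases to justify the existence of ``generic'' witnesses. If $H = F'$, then $F' \subsetneq F$, and this would immediately give $\cD_\cF(F') \subsetneq \cD_\cF(F)$, which is option 2 and contradicts the assumption; so $H \neq F'$, and symmetrically $H' \neq F$. Since $H \in \cF$ and $H \notin \cD_\cF(F')$, the only remaining possibility is $H \not\subseteq F'$, and analogously $H' \not\subseteq F$.

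Next I would verify the four non-comparabilities needed for an induced $2C_2$. Incomparability of $H$ and $H'$: if $H \subseteq H'$ then chaining with $H' \subsetneq F'$ gives $H \subseteq F'$, contradiction; the other direction is symmetric. Incomparability of $H$ and $F'$: we already know $H \not\subseteq F'$, and if $F' \subseteq H \subsetneq F$, then $H' \subsetneq F' \subseteq H \subsetneq F$ would put $H'$ in $\cD_\cF(F)$, contradiction. The pair $H', F$ is handled symmetrically. Finally, $F$ and $F'$ must be incomparable: if $F \subseteq F'$, then $H \subsetneq F \subseteq F'$ places $H$ in $\cD_\cF(F')$, contradicting our choice of $H$; the reverse containment similarly forces $H' \in \cD_\cF(F)$; and $F = F'$ trivially contradicts our assumption. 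Thus $\{H, F, H', F'\}$ is an induced copy of $2C_2$, completing the contradiction.

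I do not anticipate a real obstacle here: the statement is essentially the observation that if $\cD_\cF(F)$ and $\cD_\cF(F')$ were incomparable, then choosing witnesses to this incomparability already packages the forbidden configuration. The only ``care'' needed is the bookkeeping in the degenerate subcases ($H = F'$, $F \subseteq F'$, etc.), to ensure that we really do extract four distinct sets whose only containments are $H \subsetneq F$ and $H' \subsetneq F'$.
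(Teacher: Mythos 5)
Your proof is correct and follows essentially the same route as the paper: the paper's (one-line) argument also takes witnesses $G\in \cD_{\cF}(F)\setminus \cD_{\cF}(F')$ and $G'\in \cD_{\cF}(F')\setminus \cD_{\cF}(F)$ and observes that $F,F',G,G'$ form an induced copy of $2C_2$. You simply spell out the incomparability checks and degenerate cases that the paper leaves implicit.
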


\begin{proof}
If $G\in \cD_{\cF}(F)\setminus \cD_{\cF}(F')$ and $G'\in \cD_{\cF}(F') \setminus \cD_{\cF}(F)$, then $F,F',G,G'$ form a copy of $2C_2$.
\end{proof}

\begin{theorem}\label{2+2}
If $\cF\subseteq 2^{[n]}$ is saturating induced $2C_2$-free, then $\cF$ contains a maximal chain in $[n]$. In particular, $\sats(n,2C_2)\ge n+2$ holds.
\end{theorem}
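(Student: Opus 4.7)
My plan is a proof by contradiction that leverages Proposition~\ref{order}. First, by Observation~\ref{obs} (since $2C_2$ has neither a minimum nor a maximum element), both $\emptyset$ and $[n]$ belong to $\cF$, so we can fix a chain $C: \emptyset = F_0 \subsetneq F_1 \subsetneq \cdots \subsetneq F_k = [n]$ in $\cF$ that is \emph{maximal in $\cF$} (no $\cF$-set fits strictly between any two consecutive $F_j$'s). It suffices to show $k=n$: then $C$ is a maximal chain in $[n]$ of $n+1$ sets, and since such a chain alone is not $2C_2$-saturating (adding any singleton $\{y\}$ outside the chain produces no second incomparable $\subsetneq$-pair), saturation forces at least one extra set, giving $|\cF|\ge n+2$.

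Suppose for contradiction that $|F_{i+1}|-|F_i|\ge 2$ for some $i$; pick $a\in F_{i+1}\setminus F_i$ and set $S:=F_i\cup\{a\}$, so $F_i\subsetneq S\subsetneq F_{i+1}$ and $S\notin\cF$ by maximality. Classify each $\cF$-set by its position relative to $(F_i, F_{i+1})$: maximality of $C$ forbids $\cF$-sets strictly between $F_i$ and $F_{i+1}$, so each member of $\cF$ is $\subseteq F_i$, $\supseteq F_{i+1}$, or a \emph{mixer} (incomparable to $F_i$ or to $F_{i+1}$). If no mixer existed, then $S$ would be comparable to every $\cF$-set, so $\cF\cup\{S\}$ would still be $2C_2$-free, contradicting saturation; hence some mixer exists. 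The saturation property applied to $S$ yields an induced $2C_2$ $(A,B,X,Y)$ in $\cF\cup\{S\}$ that contains $S$, and after the $(A,B)\leftrightarrow(X,Y)$ symmetry we may assume $S\in\{A,B\}$. A short case check on how the partner of $S$ sits relative to $F_{i+1}$ (or $F_i$), combined with the max-chain constraint, extracts a $\subsetneq$-pair $(X,Y)$ of $\cF$-sets with $X,Y$ both incomparable to $F_{i+1}$ in the cleanest sub-case $(S=A,\ B=F_{i+1})$.

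The decisive step is to apply Proposition~\ref{order} to $Y$ and $F_{i+1}$: the down-sets $\cD_{\cF}(Y)$ and $\cD_{\cF}(F_{i+1})$ must be nested. If $\cD_{\cF}(Y)\subsetneq\cD_{\cF}(F_{i+1})$, any witness $F'\in\cD_{\cF}(F_{i+1})\setminus\cD_{\cF}(Y)$ satisfies $F'\subsetneq F_{i+1}$ and $F'\not\subsetneq Y$, and then $(F',F_{i+1})$ together with $(X,Y)$ is already an induced $2C_2$ in $\cF$ itself: the crucial incomparabilities follow because $F'\subseteq X$ would give $F'\subseteq Y$ (contradicting $F'\not\subsetneq Y$), while $X\subseteq F'$ would give $X\subsetneq F_{i+1}$ (contradicting $X$ incomparable to $F_{i+1}$). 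This contradicts $\cF$'s $2C_2$-freeness. Hence $\cD_{\cF}(Y)\supseteq\cD_{\cF}(F_{i+1})$, forcing $F_i\subsetneq Y$. The same dichotomy for $X$ against $F_{i+1}$ reduces matters to a "tight" sub-case in which $F_i\subsetneq X\subsetneq Y$ are both upward mixers with $X\subsetneq F_{i+2}$; a further application of Proposition~\ref{order}, this time to $X$ and $F_{i+2}$, produces the final $2C_2$ in $\cF$ and completes the contradiction.

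The main obstacle I expect is precisely this tight residual case: because $X\subseteq F_{i+2}$, no consecutive chain pair $(F_j,F_{j+1})$ is simultaneously incomparable to both $X$ and $Y$, so chain-plus-$(X,Y)$ alone does not furnish a $2C_2$ directly; the double application of Proposition~\ref{order} at two chain levels is what closes the argument. The remaining sub-cases of $S$'s position in the $2C_2$ (where $B\supsetneq F_{i+1}$, or $B$ is itself an upward mixer, or symmetrically $S=B$) are handled by the same template, and complementing $\cF$ if necessary (using that $2C_2$ is self-dual) reduces all of them to variants of the cleanest case above.
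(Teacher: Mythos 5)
Your reduction to ``every chain of $\cF$ that is maximal \emph{within} $\cF$ must have unit steps'' is false, and this breaks the whole contradiction scheme. Take the saturating family of Proposition~\ref{2C2UB}, $\cF=\{\emptyset,\{1\},\{1,2\},\dots,[n]\}\cup\{\{2\},\dots,\{n\}\}$. The chain $\emptyset\subsetneq\{3\}\subsetneq\{1,2,3\}\subsetneq\{1,2,3,4\}\subsetneq\cdots\subsetneq[n]$ lies in $\cF$, runs from $\emptyset$ to $[n]$, and is maximal within $\cF$ (the only sets strictly between $\{3\}$ and $\{1,2,3\}$ are $\{1,3\}$ and $\{2,3\}$, neither in $\cF$), yet it has a gap of size $2$. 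So a saturating induced $2C_2$-free family can perfectly well contain a maximal-in-$\cF$ chain with $|F_{i+1}|-|F_i|\ge 2$; there is no contradiction to be derived from that hypothesis, and indeed in this example the induced $2C_2$ created by $S=\{1,3\}$ (namely $\{3\}\subsetneq S$ together with $\{2\}\subsetneq\{1,2\}$) gives no way to manufacture a $2C_2$ inside $\cF$, so your ``decisive step'' has nothing to bite on. The theorem asserts only that \emph{some} maximal chain of $B_n$ lies in $\cF$, and any correct proof must construct a specific such chain rather than argue about an arbitrary maximal-in-$\cF$ chain. (A secondary issue: even in your cleanest sub-case the partner of $S$ need not be $F_{i+1}$ or $F_i$ at all, and the remaining cases are only gestured at; but the counterexample above already rules out the strategy.)

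For comparison, the paper's proof does build a canonical chain: using Proposition~\ref{order} it linearly preorders $\cF$ by the strict down-sets $\cD_\cF(\cdot)$, enumerates $[n]=F_1,\dots,F_m=\emptyset$ in decreasing order of $\cD_\cF$, and sets $G_j=\bigcap_{i\le j}F_i$. A short claim ($\cD_\cF(G_h)\subseteq\cD_\cF(F_h)\subseteq\cD_\cF(G_h)\cup\{G_h\}$) then shows that \emph{every} set $X$ with $G_{j+1}\subseteq X\subsetneq G_j$ must already belong to $\cF$: otherwise the $2C_2$ created by adding $X$ would contain a comparable pair $A\subsetneq B$ of $\cF$ incomparable to $X$, which the claim forces to be comparable to $X$ after all. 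This yields a full maximal chain of $[n]$ inside $\cF$, and the extra $+1$ comes exactly as in your last paragraph. If you want to salvage your approach, you would need to replace ``an arbitrary maximal-in-$\cF$ chain'' by a chain chosen with this kind of global information (the intersections $G_j$ ordered by down-sets), which is essentially the paper's argument.
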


\begin{proof}
Let $\cF\subseteq 2^{[n]}$ be a saturating induced $2C_2$-free family of sets. Clearly, $[n]$ and $\emptyset$ both belong to $\cF$ as they are comparable to every other set in $2^{[n]}$. For two sets $F,F'\in \cF$, we define the relation $F<F'$ if $\cD_{\cF}(F)\subsetneq \cD_{\cF}(F')$ holds. By Proposition \ref{order} we obtain that we either have $F<F'$ or $F'<F$ or $\cD_{\cF}(F)= \cD_{\cF}(F')$. Clearly, $<$ is transitive, thus we can enumerate the sets of $\cF$ as $[n]=F_1, F_2, \dots,F_m=\emptyset$ such that $i<j$ implies $F_i>F_j$ or $\cD_{\cF}(F_i)= \cD_{\cF}(F_j)$. For any $j=1,2,\dots,m$ let $G_j=\cap_{i=1}^jF_i$. In particular, we have $G_1=F_1=[n]$ and $G_m=F_m=\emptyset$ and the $G_j$'s form a chain.

\begin{claim}\label{claim}
For any $h=1,2,\dots, m$ we have $\cD_{\cF}(G_h)\subseteq \cD_{\cF}(F_h)\subseteq \cD_{\cF}(G_h)\cup \{G_h\}$.
\end{claim}

\begin{proof}[Proof of Claim]
By definition, we have $G_h=\cap_{i=1}^hF_i\subseteq F_h$ and this clearly implies $\cD_{\cF}(G_h)\subseteq \cD_{\cF}(F_h)$. Also, the way we enumerated the $F_i$'s implies $\cD_\cF(F_h)\subseteq \cD_\cF(F_i)$ for all $1\le i<h$, so $\cD_\cF(F_h)=\cap_{i=1}^h\cD_\cF(F_i)\subseteq \cD_\cF(G_h)\cup \{G_h\}$.
\end{proof}

 We show that if $G_{j+1}\subseteq X \subsetneq G_j$, then $X$ must belong to $\cF$. Suppose not, then adding $X$ to $\cF$ creates an induced copy of $2C_2$ and thus there must exist a pair $A\subseteq B$ in $\cF$ incomparable to $X$. Clearly, $A<B$, so $A=F_k$, $B=F_\ell$ for some $\ell<k$. If $\ell\le j$, then $X\subsetneq G_j\subseteq F_\ell=B$ gives a contradiction. Finally suppose $\ell \ge j+1$. Applying Claim \ref{claim} to $h=j+1$ shows that $$A\in \cD_{\cF}(F_\ell)\subseteq \cD_{\cF}(F_{j+1})\subseteq \cD_{\cF}(G_{j+1})\cup \{G_{j+1}\}\subseteq \cD(X) \cup \{X\},$$
which contradicts the assumption that $A$ and $X$ are incomparable.

This completes the proof of the fact that a maximal chain is contained in $\cF$. As $2C_2$ consists of two incomparable pairs, and in a chain all pairs of sets are comparable, by the saturation property, $\cF$ must contain at least one set not in the maximal chain. Therefore $|\cF|\ge n+2$ holds.
\end{proof}

It is a natural question whether the lower bound can be improved to $2n$ by proving that any $2C_2$-saturating family contains an antichain of size $n$.
We could neither prove, nor disprove this. 

\subsubsection*{The posets k$C_2$ for $k\ge 3$}

Define the (circular) \emph{interval lattice} $\cI_n$ as the collection of subsets of $[n]$ that are of the form $\{i,\ldots,j\}$, or their complement is of this form.
Denote by $\hat\cI_k$ the collection of subsets of $[n]$ that we get from $\cI_k$ by replacing every occurrence of $\{k\}$ with $\{k,\ldots,n\}$.
The families $\hat\cI_k$ are natural candidates for saturating $C_2+\ldots+C_2$, more precisely, the largest number of induced copies of $C_2$'s that they contain is $\lfloor \frac{2k}3\rfloor C_2$.
We have verified these claims by computer and also by hand, but our arguments are not particularly interesting, just mainly a case analysis, so we do not include them here.

\begin{proposition}\label{prop:3C2}
	$\hat\cI_4$ is saturating for $3C_2$, thus $\sats(n,3C_2)\le 14$.
\end{proposition}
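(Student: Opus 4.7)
The plan is to verify both halves of the saturation property: that $\hat\cI_4$ contains no induced copy of $3C_2$, and that the addition of any $S\subseteq [n]$ outside $\hat\cI_4$ creates one. Setting $M=\{4,\ldots,n\}$, the family $\hat\cI_4$ consists of the $14$ sets obtained from $\cI_4$ by replacing every occurrence of $\{4\}$ by $M$, and this substitution is a poset-isomorphism between $\hat\cI_4$ and $\cI_4$; hence for $3C_2$-freeness it is enough to argue in $\cI_4$ on the ground set $[4]$. The size bound $\sats(n,3C_2)\le 14$ then follows at once from $|\hat\cI_4|=14$ once saturation is established.

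For the $3C_2$-freeness of $\cI_4$, observe that $\emptyset$ and $[4]$ are comparable to every set and so cannot appear in any copy of $3C_2$; the argument reduces to the twelve sets of ranks $1,2,3$. I would enumerate the $2$-chains $A\subsetneq B$ among these twelve sets, organized by the rank pattern $(|A|,|B|)\in\{(1,2),(1,3),(2,3)\}$, and check by a short case analysis that no three of them can be pairwise incomparable. The dihedral symmetries of the cycle $1$-$2$-$3$-$4$ together with complementation cut the number of essentially different rank-pattern triples to a handful. The prototypical argument is for the all-$(1,2)$ case: the required incomparabilities force the three size-$2$ intervals to share a common element $c$, but $c$ lies in only two cyclic intervals of size $2$, so at most two such pairs can coexist, a contradiction. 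The remaining rank-pattern triples are handled by similar pigeonhole arguments exploiting that any two distinct size-$3$ cyclic intervals in $\cI_4$ have union $[4]$.

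For the saturation half, let $S\notin\hat\cI_4$ and write $T=S\cap M$, $U=S\cap[3]$. Since every member of $\hat\cI_4$ satisfies $F\cap M\in\{\emptyset,M\}$, there are three cases: $T=\emptyset$, which forces $S=\{1,3\}$; $T=M$, which forces $S=\{2\}\cup M$; or $\emptyset\subsetneq T\subsetneq M$, where $U$ can be any subset of $[3]$. Using the automorphism $1\leftrightarrow 3$ of $\hat\cI_4$ and the complementation $F\mapsto [n]\setminus F$, which preserves both $\hat\cI_4$ and $3C_2$, these possibilities collapse to a handful of essentially distinct configurations. For each configuration I would exhibit an explicit induced $3C_2$ in $\hat\cI_4\cup\{S\}$ by taking one pair that involves $S$ (for instance $(\{u\},S)$ for some $u\in U$, or $(S,M)$ when $U=\emptyset$, or $(\{1,2,3\},S)$ when $U=\{1,2,3\}$), together with two further pairs drawn from $\hat\cI_4$ and chosen to be pairwise incomparable and incomparable with the first pair; e.g.\ when $U=\emptyset$ one may take $(\{1\},\{1,2\})$ and $(\{3\},\{2,3\})$, while when $U=\{2\}$ the pairs $(\{1\},\{1\}\cup M)$ and $(\{3\},\{3\}\cup M)$ work. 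The main obstacle is the bookkeeping: each case requires verifying the fifteen comparability relations among the six chosen sets, but within each case the constructions are short and the verifications are routine.
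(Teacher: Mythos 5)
Your proposal is correct and is essentially the paper's own (unprinted) argument: the paper only records that $\hat\cI_4$ was verified to be $3C_2$-saturating by computer and by a hand case analysis it chose not to include, and your sketch is exactly such a case analysis, reduced via the poset isomorphism with $\cI_4$ and the $1\leftrightarrow 3$/complementation symmetries. The explicit pieces you give check out — in the all-$(1,2)$ case the three singleton bottoms do force each $2$-interval top to be its bottom together with the fourth element, which lies in only two cyclic $2$-intervals, and your witness copies for $U=\emptyset$ and $U=\{2\}$ are genuine induced copies of $3C_2$ — and the remaining symmetry classes (e.g.\ $U=\{1\}$ and $S=\{1,3\}$) go through by the same routine verifications.
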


\begin{proposition}\label{prop:5C2}
	$\hat\cI_7$ is saturating for $5C_2$, thus $\sats(n,5C_2)\le 44$.
\end{proposition}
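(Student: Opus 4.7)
The plan is to verify the two halves of $5C_2$-saturation for $\hat{\cI}_7$ separately, following the same style of argument hinted at for Proposition~\pref{3C2}. Throughout I would identify $\hat{\cI}_7$ with the circular interval lattice on the $7$-element cyclic support $\Omega=\{1,2,\ldots,6,\star\}$, where $\star$ represents the super-element $T:=\{7,\ldots,n\}$; a set $S\subseteq[n]$ with $S\cap T\in\{\emptyset,T\}$ then descends to a subset of $\Omega$, and membership in $\hat{\cI}_7$ amounts to that subset being a (cyclic) arc.

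For $5C_2$-freeness I would invoke the $k=7$ instance of the general claim the authors assert right before Proposition~\pref{3C2}: the largest induced $mC_2$ inside $\hat{\cI}_k$ has $m=\lfloor 2k/3\rfloor$, which here forces $m\le 4<5$. A direct verification is a packing argument on the $7$-cycle: each induced $C_2$-pair $A\subsetneq B$ in $\hat{\cI}_7$ is a strict nesting $\alpha\subsetneq\beta$ of arcs, mutual incomparability between two such pairs restricts how their arc supports may interleave cyclically, and a short case check caps the count at $\lfloor 2\cdot 7/3\rfloor=4$.

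For saturation I would show that for every $G\in 2^{[n]}\setminus\hat{\cI}_7$ the family $\hat{\cI}_7\cup\{G\}$ contains an induced $5C_2$. I split on $G\cap T$. \emph{Case A} ($\emptyset\ne G\cap T\ne T$): every member of $\hat{\cI}_7$ either contains all of $T$ or is disjoint from $T$, so $G$ strictly splits $T$ and is automatically incomparable to many sets of $\hat{\cI}_7$. I would pair one ``$G$-edge'' --- either $(A,G)$ for some $A\subsetneq G$ in $\hat{\cI}_7$ or $(G,B)$ for some $G\subsetneq B$ --- with a suitable $4C_2$ chosen from $\hat{\cI}_7$ whose ten sets all lie on the ``other side'' of the $T$-split from $G$. \emph{Case B} ($G\cap T\in\{\emptyset,T\}$): then $G$ descends to a subset $\widehat G\subseteq\Omega$ that is not a cyclic arc, so it has at least two maximal cyclic runs. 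Each extra run produces a fresh incomparability between $G$ and the basic length-$1$ and length-$2$ arcs of $\hat{\cI}_7$, and I would assemble the $5C_2$ from several such $G$-pairs together with a compatible $3C_2$- or $4C_2$-template drawn from the remaining arcs. The reflective symmetry $i\leftrightarrow 7-i$ (fixing $T$) together with set-complementation in $\hat{\cI}_7$ should cut the number of essentially distinct shapes of $G$ to check by roughly a factor of four.

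The main obstacle is not any single construction but the bookkeeping: in every sub-case one must certify mutual incomparability across all ten sets of the exhibited $5C_2$, ruling out every cross-comparison between the small and the large members of \emph{different} pairs, not merely within a single pair. Even with the symmetries the number of geometrically distinct shapes of $\widehat G$ on $\Omega$ (together with the split-$T$ variants in Case~A) is large enough that a fully written-out proof is long --- which is precisely why the authors report verifying the statement primarily by computer with a hand cross-check, and why I would present the proof as a template-plus-enumeration rather than a short conceptual argument.
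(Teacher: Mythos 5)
Your strategy is the same one the paper itself uses: the authors state just before Proposition~\ref{prop:3C2} that they verified the $\hat\cI_k$ claims ``by computer and also by hand'' via case analysis and print no argument, so there is no written proof to match against. The genuine gap in your proposal is that it stops at exactly the same point: nothing is actually verified. The $5C_2$-freeness half is discharged by citing the paper's own assertion that $\hat\cI_k$ contains at most $\lfloor 2k/3\rfloor$ mutually incomparable comparable pairs --- but that assertion is itself only announced in the paper, so invoking it is circular if the goal is a proof, and your fallback (``a short case check'' on the $7$-cycle) is named but not performed. The saturation half is likewise only a template: Cases~A and~B say where one would look for the five pairs, but for no single non-member $G$ is a concrete ten-set configuration exhibited and its mutual incomparabilities checked, and that bookkeeping is not an afterthought --- it \emph{is} the proof. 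As written, the proposal certifies neither $5C_2$-freeness nor saturation, so it does not establish the proposition; to make it a proof you must either complete the enumeration (your symmetry reductions are a reasonable way to organize it) or present a verifiable computer check, which is what the authors in fact did.

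A few smaller points to repair when you do carry it out. A $4C_2$ consists of eight sets, not ten. In Case~A, a set $G$ that splits $T=\{7,\ldots,n\}$ is \emph{not} automatically incomparable to most of $\hat\cI_7$: for instance $G=\{7\}$ lies below every member containing the block $T$ and above $\emptyset$, so both the choice of the $G$-pair and of a $4C_2$ disjoint in comparability from it genuinely depend on the shape of $G$. Finally, the numerical conclusion needs the count $|\hat\cI_7|=7\cdot 6+2=44$, which you never state; note also that the paper's Proposition~\ref{prop:const} later gives the sharper bound $\sats(n,5C_2)\le 42$, so here only saturation, not optimality, of $\hat\cI_7$ is at stake.
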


\begin{proposition}
	$\hat\cI_{10}$ is \emph{not} saturating for $7C_2$.
\end{proposition}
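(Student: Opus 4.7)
The plan is to prove this by exhibiting a single set $G\in 2^{[n]}\setminus\hat\cI_{10}$ such that $\hat\cI_{10}\cup\{G\}$ remains induced $7C_2$-free. Combined with the preceding observation that $\hat\cI_{10}$ itself contains only $\lfloor 2\cdot 10/3\rfloor=6$ disjoint induced copies of $C_2$, this gives the desired non-saturation. The verification reduces to a finite check: any new induced $7C_2$ in $\hat\cI_{10}\cup\{G\}$ must use $G$, so $G$ sits in a $C_2$-pair with some partner $H\in\hat\cI_{10}$ (with $H\subsetneq G$ or $G\subsetneq H$), and the remaining twelve sets form an induced $6C_2$ inside the residual sub-family
\[
\cR_{G,H}:=\{F\in\hat\cI_{10}:F\text{ is incomparable to both }G\text{ and }H\}.
\]
So one needs a $G$ for which $\cR_{G,H}$ has maximum induced $C_2$-packing at most $5$ for every such partner $H$.

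For the candidate $G$, I would search among ``near-members'' of $\hat\cI_{10}$: either subsets partially overlapping the blown-up block $\{10,\ldots,n\}$ (e.g.\ $G=\{1,10\}$), or subsets of $[9]$ whose projection to $[10]$ is not a circular interval (e.g.\ $G=\{1,3\}$). A naive choice such as $G=\{10\}$ is ruled out immediately: it pairs with $\{10,\ldots,n\}\in\hat\cI_{10}$ to form a $C_2$ that is incomparable to every non-empty subset of $[9]$, and the sub-lattice of intervals of $[9]$ explicitly contains the induced $6C_2$ given by $(\{1\},\{1,2\})$, $(\{3\},\{2,3\})$, $(\{4\},\{4,5\})$, $(\{6\},\{5,6\})$, $(\{7\},\{7,8\})$, $(\{9\},\{8,9\})$, so a $7C_2$ arises. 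The correct $G$ must therefore have enough comparabilities with members of $\hat\cI_{10}$ to destroy every such six-pair configuration inside each residual $\cR_{G,H}$.

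The principal obstacle is the combinatorial case analysis required to verify the chosen $G$: typically several dozen partners $H$ must be considered, and for each the residual $\cR_{G,H}$ must be shown to admit at most $5$ disjoint induced $C_2$'s. The paper's remark that the claim was ``verified by computer'' reflects this finite, non-structural nature of the check. A by-hand argument would (i) dispose of the trivial partners $H\in\{\emptyset,[n]\}$, which leave empty residuals, (ii) exploit the rotational symmetries of $\cI_{10}$ to group similar complement-type partners $H=\{1,\ldots,i-1\}\cup\{j+1,\ldots,n\}$, and (iii) recognize each non-trivial residual as a sub-family of intervals supported on a shorter arc of the cycle, where the $\lfloor 2k'/3\rfloor$-bound with a smaller effective $k'$ rules out a $6C_2$. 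Identifying a concrete $G$ that survives all these checks simultaneously is the heart of the result and appears to require the computer search that the authors mention.
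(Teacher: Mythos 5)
Your overall strategy is the correct one---to show $\hat\cI_{10}$ is not saturating you must exhibit a concrete $G\notin\hat\cI_{10}$ whose addition creates no induced $7C_2$, and your reduction (any new copy must use $G$, paired with some $H$, with the remaining twelve sets forming an induced $6C_2$ inside the residual family $\cR_{G,H}$) is sound. But the proposal never produces such a $G$: you only rule out $\{10\}$, float $\{1,10\}$ and $\{1,3\}$ as search directions, and explicitly defer the identification and verification of a witness to a computer search. That deferral is precisely the content of the proposition, so as written the argument has a genuine gap. The paper's proof consists exactly of the missing ingredient: take $n=10$ (so $\hat\cI_{10}=\cI_{10}$) and add $G=\{1,3,5,7,9\}$. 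This alternating set is comparable only to $\emptyset$, $[10]$, the five odd singletons and the five complements of even singletons, and is incomparable to every circular arc of length $2$ through $8$; this is what makes each residual $\cR_{G,H}$ too poor to contain a $6C_2$, a finite check which the authors carried out (by computer).

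Note also that your proposed search among ``near-members'' points in the wrong direction, and at least one of your candidates demonstrably fails: for $G=\{1,3\}$ with partner $H=\{3\}$, the pairs $(\{4\},\{4,5\})$, $(\{6\},\{5,6\})$, $(\{7\},\{7,8\})$, $(\{9\},\{8,9\})$, $(\{10\},\{10,1\})$, $(\{2\},\{1,2\})$ all lie in $\cI_{10}$, avoid $3$, are not contained in $\{1,3\}$, and are pairwise incomparable across pairs, so together with $\{3\}\subsetneq\{1,3\}$ they form an induced $7C_2$ in $\cI_{10}\cup\{\{1,3\}\}$. Similar packings on the long arc left untouched defeat most small perturbations of intervals, which is why the successful witness is the set that is as far as possible from being an arc, namely the alternating set. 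So to repair the proposal you would need to name $\{1,3,5,7,9\}$ (or another verified witness) and carry out, or cite, the finite verification that no partner $H$ admits a $6C_2$ in its residual family.
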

\begin{proof}
	Let $n=10$ and add the set $\{1,3,5,7,9\}$.
\end{proof}


Based on this, it is not clear what to conjecture, but with a computer we have found a saturating family also for $7C_2$ (see later).


\subsection{Greedy colex process for induced saturation and butterfly}

In this subsection we consider the induced version of Algorithm \ref{algo} that showed $\sat(n,P)\le 2^{|P|-2}$ for any poset $P$. As a reminder, we build a family $\cF\subseteq 2^{[n]}$ as follows. We enumerate the sets of $2^{[n-1]}$ in colex order: $F_1,F_2,\dots, F_{2^{n-1}}$. Then setting $\cF_0=\emptyset$, we repeat the following: once $\cF_{i-1}$ is defined, we add $F_i$ if it does not create a copy of $P$, then we add $G_i=[n]\setminus F_i$ if it does not create a copy of $P$. This gives $\cF_i$. Clearly, $\cF=\cF_{2^{n-1}}$ is saturating $P$-free.

The greedy colex process gave non-linear bounds on some posets, but performed well on others. In fact, for the butterfly poset, $\bowtie$, the previous best upper bound on $\sats(n,\bowtie)$ due to Ferrara et al.~\cite{F7} was quadratic in $n$, but we have managed to improve it to linear by analyzing the output of the greedy colex process. We need to define the resulting family. Let $\cT_1=\{\emptyset\}$, $\cT_{2}=\{\{1\},\{2\},\{1,2\}\}$, $\cT_3=\{\{3\},\{1,3\},\{2,3\}\}$. For $k\ge 2$ let $$\cT_{2k}=\{\{1,4,6,\dots,2k\},\{2,4,6,\dots,2k\},\{1,2,4,6,\dots,2k\}\}$$ and $$\cT_{2k+1}=\{\{3,5,\dots,2k+1\},\{1,3,5,\dots,2k+1\},\{2,3,5,\dots,2k+1\}\}.$$ For any $1\le j<n$, let $\cT_{j,n}=\{[n]\setminus T: T\in \cT_j\}$. Finally, let $\cH_n=\cup_{j=1}^{n-1}(\cT_j\cup \cT_{j,n})$.

\begin{theorem}\label{butterfly}
For $n\ge 3$ the greedy colex induced $\bowtie$-free process produces $\cH_n$, in particular $\sats(n,\bowtie)\le 6n-10$.
\end{theorem}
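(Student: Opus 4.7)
The plan is to verify directly that the family $\cH_n$ described in the theorem coincides with the output of the greedy colex induced $\bowtie$-free process. Since $|\cT_1|=1$ and $|\cT_j|=3$ for $2\le j\le n-1$, and for $n\ge 3$ the collections $\cT_j$ and $\cT_{j,n}$ are pairwise disjoint, we have $|\cH_n|=2(1+3(n-2))=6n-10$, so the upper bound on $\sats(n,\bowtie)$ will follow from the structural claim. The verification naturally splits into two tasks: first, that $\cH_n$ itself is induced $\bowtie$-free, and second, that each set outside $\cH_n$ would create an induced $\bowtie$ when offered at its step of the greedy process.

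For the butterfly-freeness of $\cH_n$, I would exploit the recursive structure $\cT_j=\{\{j\}\cup T:T\in\cT_{j-2}\}$ for $j\ge 3$. The central observation I would prove is that, for any two incomparable ``small'' sets $A,B\in\bigcup_j\cT_j$, the common upper bounds of $A,B$ inside $\cH_n$ form a single chain: the small common upper bounds form an increasing chain coming from $\cT_j$'s of one fixed parity moving upward, the large common upper bounds form a decreasing chain of complements of small sets disjoint from $A\cup B$, and every small upper bound is contained in every large upper bound. Since an induced $\bowtie$ requires two incomparable common upper bounds of its two bottom elements, this chain property rules out every $\bowtie$ with both bottom sets in $\bigcup_j\cT_j$. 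The complement symmetry of $\cH_n$ handles the symmetric case of two large bottom sets, and a $\bowtie$ with a ``mixed'' incomparable bottom pair (one small, one large) forces both top sets to be large, reducing to the large-bottom chain argument.

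For the greedy process I would induct on the step $i$ in the colex order, maintaining the invariant $\cF_i=\cH_n\cap(\{F_1,\dots,F_i\}\cup\{G_1,\dots,G_i\})$. When $F_i$ or $G_i$ lies in $\cH_n$, the butterfly-freeness of $\cH_n$ guarantees acceptance. When $F_i\notin\cH_n$, I would exhibit an explicit induced $\bowtie$ in $\cF_{i-1}\cup\{F_i\}$: for instance, a stray singleton $\{m\}$ with $m\ge 4$ is ruled out by the butterfly $\{\{1\},\{m\},[n]\setminus\{2\},[n]\setminus\{3\}\}$, the earliest missing set $\{1,2,3\}$ is ruled out by $\{\{1\},\{2\},\{1,2,3\},[n]\setminus\{3\}\}$, and each of the three non-$F_i$ members sits in $\cF_{i-1}$ by the inductive hypothesis. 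Analogous witnesses handle every other skipped $F_i$, classified by $\max F_i$ and by the way $F_i$ deviates from the $\cT_{\max F_i}$ pattern; the complement symmetry $F\leftrightarrow[n]\setminus F$ reduces every skipped-$G_i$ case to a skipped-$F_i$ case. The main obstacle is precisely this last step: enumerating all deviation types from the $\cT_{\max F_i}$ pattern, producing a butterfly witness for each, and verifying via the colex ordering that the three witness sets were added to $\cF_{i-1}$ before step $i$ and that the chosen ``top'' witnesses are genuinely incomparable large sets.
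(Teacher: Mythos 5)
Your outline identifies the right two tasks (butterfly-freeness of $\cH_n$ and rejection of every set outside $\cH_n$), and the step-by-step invariant $\cF_i=\cH_n\cap(\{F_1,\dots,F_i\}\cup\{G_1,\dots,G_i\})$ is a legitimate frame, but the proposal has a genuine gap at exactly the point you flag yourself: the systematic production of butterfly witnesses for every skipped set, together with the verification that the witnessing sets precede it in the colex order. Two ad hoc examples ($\{m\}$ with $m\ge 4$, and $\{1,2,3\}$) plus the word ``analogous'' do not carry this; the sets outside $\cH_n$ form almost all of $2^{[n]}$ and their ``deviation types'' from the $\cT_{\max F_i}$ pattern are not few, so without an organizing principle the case analysis is the theorem, not a routine afterthought. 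Likewise your ``central observation'' (that the common upper bounds in $\cH_n$ of two incomparable small sets form a chain) is only asserted; it looks plausible, but it is an unproved lemma on which the acceptance half of your induction rests. Finally, the complement-symmetry reduction of skipped $G_i$'s to skipped $F_i$'s needs the remark that $\cH_n$ is complement-closed, so $G_i\notin\cH_n$ forces $F_i\notin\cH_n$ and hence the family tested against $G_i$ is still the complement-closed $\cF_{i-1}$; as written this is implicit at best.

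The paper avoids the explosion you ran into by a different induction: on $n$ rather than on the greedy step. The map $F_i\mapsto F_i$, $[n]\setminus F_i\mapsto [n+1]\setminus F_i$ preserves all inclusions and non-inclusions, so the first half of the process on $[n+1]$ literally replays the process on $[n]$ and, by induction, yields $\bigcup_{i=1}^{n-1}(\cT_i\cup\cT_{i,n+1})$; only the sets containing exactly one of $n,n+1$ remain to be analyzed. These are classified by their intersection $S$ with $\{1,2,3\}$: for $S=\emptyset$ or $S=\{1,2,3\}$ a single explicit butterfly kills them, and for $|S|\in\{1,2\}$ the six chains $\cC_S$ (which partition $\cH_{n+1}\setminus\{\emptyset,[n+1]\}$ and meet each $\cT_j\cup\cT_{j,n+1}$ exactly once) give a uniform witness: any new set incomparable to some member of $\cC_S$ already in the family forms a butterfly with that member and the two sets $[n+1]\setminus S'$, $[n+1]\setminus S''$ (respectively the two singletons of $S$ when $|S|=2$), leaving only two candidates per $S$, of which the colex-earlier one is the $\cH_{n+1}$-member and the later one is then blocked by it. If you want to salvage your single-$n$ induction, you would need an analogue of this uniform witness structure; otherwise, adopting the paper's induction on $n$ is the shorter route.
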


\begin{proof}
The cases $n=3,4$ can be verified by hand, then we apply induction on $n$ and assume that the statement of the theorem holds for $n$. Observe that the mapping with $F_i \mapsto F_i$, $[n]\setminus F_i \mapsto [n+1] \setminus F_i$ is inclusion and non-inclusion preserving from $2^{[n]}$ to the first half of the greedy colex order of $2^{[n+1]}$. This shows that when we run the greedy colex process on $2^{[n+1]}$ (for the sets that do not contain $n,n+1$ and their complements), we obtain $\cF_{2^{n-1}}=f[\cH_n]=\cup_{i=1}^{n-1}(\cT_i \cup \cT_{i,n+1})$. So we need to show that in the second half of the process exactly sets of $\cT_n \cup \cT_{n,n+1}$ are added. Observe that in the second half of the process one considers the sets that contain exactly one of $n$ and $n+1$.

We deal with the sets according to their intersection with $\{1,2,3\}$. If $F\cap \{1,2,3\}=\emptyset$, then $F,\{3\},[n+1]\setminus \{1\}, [n+1]\setminus \{2\}$ form a butterfly, so $F$ cannot be added. Similarly, if $\{1,2,3\}\subseteq F$, then $F,[n+1]\setminus \{3\}, \{1\},\{2\}$ form a butterfly, so $F$ cannot be added in this case, either.

For sets with $|F\cap \{1,2,3\}|=1$ or $2$, let us consider the following six chains.
\begin{itemize}
\item
Let $\cC_1=\{\{1\} \subset \{1,4\} \subset \{1,4,6\} \subset ...\subset \{1,4,6,...,2m-2\}\subset \{1,4,6,...,2m\} = [n+1]\setminus \{2,3,5,...,2m-1\} \subset [n+1]\setminus \{2,3,5,...,2m-3\} \subset ... \subset [n+1] \setminus \{2,3\}\}$ if $n+1=2m$ is even and let $\cC_1=\{\{1\} \subset \{1,4\} \subset \{1,4,6\} \subset ...\subset \{1,4,6,...,2m\}\subset \{1,4,6,...,2m,2m+1\} = [n+1]\setminus \{2,3,5,...,2m-1\} \subset [n+1]\setminus \{2,3,5,...,2m-3\} \subset ... \subset [n+1] \setminus \{2,3\}\}$ if $n+1=2m+1$ is odd.
\item
Let $\cC_2=\{\{2\} \subset \{2,4\} \subset \{2,4,6\} \subset ...\subset \{2,4,6,...,2m-2\}\subset \{2,4,6,...,2m\} = [n+1]\setminus \{1,3,5,...,2m-1\} \subset [n+1]\setminus \{1,3,5,...,2m-3\} \subset ... \subset [n+1] \setminus \{1,3\}\}$ if $n+1=2m$ is even and let $\cC_2=\{\{2\} \subset \{2,4\} \subset \{2,4,6\} \subset ...\subset \{2,4,6,...,2m\}\subset \{2,4,6,...,2m,2m+1\} = [n+1]\setminus \{1,3,5,...,2m-1\} \subset [n+1]\setminus \{1,3,5,...,2m-3\} \subset ... \subset [n+1] \setminus \{1,3\}\}$ if $n+1=2m+1$ is odd.
\item
Let $\cC_3=\{\{3\} \subset \{3,5\} \subset \{3,5,7\} \subset ...\subset \{3,5,7,...,2m-1\}\subset \{3,5,7,...,2m-1,2m\} = [n+1]\setminus \{1,2,4,...,2m-2\} \subset [n+1]\setminus \{1,2,4,...,2m-4\} \subset ... \subset [n+1] \setminus \{1,2\}\}$ if $n+1=2m$ is even and let $\cC_3=\{\{3\} \subset \{3,5\} \subset \{3,5,7\} \subset ...\subset \{3,5,7,...,2m+1\}\subset \{3,5,7,...2m-1,2m,2m+1\} = [n+1]\setminus \{1,2,4,...,2m-2\} \subset [n+1]\setminus \{1,2,4,...,2m-4\} \subset ... \subset [n+1] \setminus \{1,2\}\}$ if $n+1=2m+1$ is odd.
\item
Let $\cC_{1,2}=\{\{1,2\} \subset \{1,2,4\} \subset \{1,2,4,6\} \subset ...\subset \{1,2,4,6,...,2m-2\}\subset \{1,2,4,6,...,$ $2m\} = [n+1]\setminus \{3,5,...,2m-1\} \subset [n+1]\setminus \{3,5,...,2m-3\} \subset ... \subset [n+1] \setminus \{3\}\}$ if $n+1=2m$ is even and let $\cC_{1,2}=\{\{1,2\} \subset \{1,2,4\} \subset \{1,2,4,6\} \subset ...\subset \{1,2,4,6,...,2m\}\subset \{1,2,4,6,...,2m,2m+1\} = [n+1]\setminus \{3,5,...,2m-1\} \subset [n+1]\setminus \{3,5,...,2m-3\} \subset ... \subset [n+1] \setminus \{3\}\}$ if $n+1=2m+1$ is odd.
\item
Let $\cC_{1,3}=\{\{1,3\} \subset \{1,3,5\} \subset \{1,3,5,7\} \subset ...\subset \{1,3,5,7,...,2m-1\}\subset \{1,3,5,7,...,$ $2m-1,2m\} = [n+1]\setminus \{2,4,...,2m-2\} \subset [n+1]\setminus \{2,4,...,2m-4\} \subset ... \subset [n+1] \setminus \{2\}\}$ if $n+1=2m$ is even and let $\cC_{1,3}=\{\{1,3\} \subset \{1,3,5\} \subset \{1,3,5,7\} \subset ...\subset \{1,3,5,7,...,2m+1\}\subset \{1,3,5,7,...2m-1,2m,2m+1\} = [n+1]\setminus \{2,4,...,2m-2\} \subset [n+1]\setminus \{2,4,...,2m-4\} \subset ... \subset [n+1] \setminus \{2\}\}$ if $n+1=2m+1$ is odd.
\item
Let $\cC_{2,3}=\{\{2,3\} \subset \{2,3,5\} \subset \{2,3,5,7\} \subset ...\subset \{2,3,5,7,...,2m-1\}\subset \{2,3,5,7,...,$ $2m-1,2m\} = [n+1]\setminus \{1,4,...,2m-2\} \subset [n+1]\setminus \{1,4,...,2m-4\} \subset ... \subset [n+1] \setminus \{1\}\}$ if $n+1=2m$ is even and let $\cC_{2,3}=\{\{2,3\} \subset \{2,3,5\} \subset \{2,3,5,7\} \subset ...\subset \{2,3,5,7,...,2m+1\}\subset \{2,3,5,7,...2m-1,2m,2m+1\} = [n+1]\setminus \{1,4,...,2m-2\} \subset [n+1]\setminus \{1,4,...,2m-4\} \subset ... \subset [n+1] \setminus \{1\}\}$ if $n+1=2m+1$ is odd.
\end{itemize}
Now observe that $|\cC_S\cap (\cT_j\cup \cT_{j,n+1})|=1$ for any $S\subseteq \{1,2,3\}$, $|S|=1,2$ and $2\le j\le n$, i.e., these chains partition $\cH_{n+1}\setminus \{\emptyset,[n+1]\}$ and all of them contain exactly one set that will be added in the second half of the greedy colex process. So if $F\cap \{1,2,3\}=S$ with $|S|=1$, then $[n+1]\setminus S',[n+1]\setminus S'',F,G$ form a butterfly where $S'$ and $S''$ are the other two singleton subsets of $\{1,2,3\}$, and $G$ is any member of $\cC_S$ that is incomparable to $F$. As the sizes of consecutive sets of these chains differ by 1, there are only two sets $F$ with $F\cap \{1,2,3\}=S$ that are comparable to all sets of $\cC_S$ that are in $\cF_{2^{n-1}}$: the unique set $F_S\in \cC_S \cap (\cT_n \cup \cT_{n,n+1})$ and the other set of the same size between the two sets of $\cC_S$ neighboring $F_S$. It is easy to verify that in the greedy colex process $F_S$ comes first, so it will be added, and the other will not as together with $F_S$ and $[n+1]\setminus S',[n+1]\setminus S''$ it would form a butterfly.

An analogous argument is valid for the case  $F\cap \{1,2,3\}=S$ with $|S|=2$, and sets $\{s_1\},\{s_2\},F,G$, where $s_1$ and $s_2$ are the two elements of $S$ and $G$ is any member of $\cC_S$ that is incomparable to $F$.
\end{proof}

As Ivan has recently obtained the lower bound $\sats(n,\bowtie)\ge n+1$ \cite{I}, we get the following corollary.

\begin{corollary}
	For the butterfly poset, we have $\sats(n,\bowtie)=\Theta(n)$.
\end{corollary}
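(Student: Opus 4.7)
The plan is to observe that the corollary follows immediately by combining two results already in hand, one proved just above in the excerpt and one cited from the literature. The first step is the upper bound: Theorem \ref{butterfly} gives $\sats(n,\bowtie)\le 6n-10$ for $n\ge 3$, so the saturation number grows at most linearly in $n$. Since this was just established via the greedy colex analysis, I would simply invoke it, with no further work needed.

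The second step is the matching lower bound: Ivan's recent result \cite{I}, also listed in Table \ref{table}, gives $\sats(n,\bowtie)\ge n+1$. Once both are in place, $n+1\le \sats(n,\bowtie)\le 6n-10$ holds for all $n\ge 3$, which is exactly the statement $\sats(n,\bowtie)=\Theta(n)$. The main ``obstacle'' here is purely bookkeeping: one should check that the two inequalities are valid on a common infinite range of $n$, which they are (both hold for all $n\ge 3$), so the $\Theta$-conclusion is immediate.

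In short, the proof is a one-line assembly: apply Theorem \ref{butterfly} for the upper bound, apply Ivan's theorem for the lower bound, and conclude $\sats(n,\bowtie)=\Theta(n)$. No new construction, no new estimate, and no case analysis is required beyond what is already contained in the two cited statements.
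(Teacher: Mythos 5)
Your proof is correct and is exactly the paper's argument: the upper bound $\sats(n,\bowtie)\le 6n-10$ from Theorem \ref{butterfly} combined with Ivan's lower bound $\sats(n,\bowtie)\ge n+1$ from \cite{I} immediately gives $\sats(n,\bowtie)=\Theta(n)$.
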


\subsection{Experimental results}

Here we report further upper bounds that were found by running the greedy colex process. Observe that if the algorithm for poset $P$ and ground set $[m]$ returns a family that does not separate $m-1$ and $m$, then Corollary \ref{cor:const} yields that the size of the resulting family is an upper bound on $sat^*(n,P)$ for any $n\ge m$. When running our algorithm, we can also exploit the fact that if some set $F$ whose largest element is $m$ forms a copy of $P$ with some collection of sets from $\cF_{2^{m-1}}$, then $F\setminus\{m\}\cup \{m'\}$ will also form a copy of $P$ with the same collection of sets for any $m<m'<n$; this significantly reduces the sets we have to test, sometimes even to linear in $n$. When checking $P$-freeness of a family $\cF$ for a $P$ that is a disjoint union of chains, then it is sufficient to search for a copy of $P$ in the Hasse-diagram of $\cF$, as any other copy could be transformed into one such (similarly as is done at the end of the proof of Theorem \ref{thm:uctp+chain}); this enabled us to run our code for large posets that are the disjoint union of chains by maintaining the Hasse-diagram of $\cF_i$.

In the next proposition we list some posets that we have found interesting.
The posets $C_2+2C_1$ and $C_3+C_1$ are there to have some concrete upper bound for all posets with at most $4$ elements.
The other posets are disjoint unions of chains that do not satisfy the conditions of Theorem \ref{thm:chain+shallow}, so we had no upper bound for them, except for $3C_2$ and $5C_2$, covered by Propositions \ref{prop:3C2} and \ref{prop:5C2}, respectively.
From among the posets on $5$ elements that are covered by neither Theorem \ref{thm:uctp+chain} nor Theorem \ref{thm:chain+shallow}, the greedy colex process could find a constant upper bound only for some that are the disjoint union of smaller posets, such as $2C_2+C_1$.
We obtained all the following bounds via computer and give a full description of the families that witness these bounds in the Appendix.

\begin{proposition}\label{prop:const}
	\begin{equation*}
	\begin{split}
	\\
	\sats(n,3C_2)&\le 14,\\
	\sats(n,5C_2)&\le 42,\\
	\sats(n,7C_2)&\le 60,\\
	\sats(n,3C_3)&\le 28,\\
	\sats(n,3C_4)&\le 52,
	\end{split}
	\quad~~\quad
	\begin{split}
	\sats(n,C_2+2C_1)&\le 8,\\
	\sats(n,C_3+C_1)&\le 8,\\
	\sats(n,2C_2+C_1)&\le 12,\\
	\sats(n,2C_3+C_1)&\le 28,\\
	\sats(n,2C_3+C_2)&\le 20,\\	
	\sats(n,2C_3+2C_1)&\le 26,
	\end{split}
	\quad~~\quad
	\begin{split}
	\\
	\sats(n,2C_4+C_1)&\le 60,\\
	\sats(n,2C_4+2C_1)&\le 68,\\
	\sats(n,2C_4+C_2)&\le 54,\\
	\sats(n,2C_4+C_3)&\le 38,\\
	\sats(n,2C_4+2C_2)&\le 46.
	\end{split}
	\end{equation*}

	
%
%
%
%
%
%
%
%
%
%
%
%
%
%
%
\end{proposition}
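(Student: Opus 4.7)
The plan is to produce, for each poset $P$ listed, an explicit family $\cF \subseteq 2^{[m]}$ on a sufficiently small ground set $[m]$ and then invoke Corollary \ref{cor:const}. Concretely, I would run the induced version of the greedy colex process (the analogue of Algorithm \ref{algo} in which $\cF_i \cup \{F_{i+1}\}$ and $\cF_i' \cup \{G_{i+1}\}$ are added whenever they remain induced $P$-free) on ground sets $[m]$ for modest values of $m$, stopping as soon as the output $\cF$ has two elements of $[m]$ that are not separated by $\cF$. Then Corollary \ref{cor:const} immediately upgrades the bound $|\cF|$ to a bound on $\sats(n,P)$ valid for all $n \ge m$, since the non-singleton atom can be inflated to any size without destroying $P$-saturation.

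For each $P$ in the proposition, the explicit family obtained this way (listed in the appendix of the arXiv version \cite{arxiv}) is the certificate. The verification of the claimed inequality thus reduces to three mechanical checks on the listed $\cF$: (i) $\cF$ contains no induced copy of $P$; (ii) for every $G \in 2^{[m]} \setminus \cF$, the family $\cF \cup \{G\}$ contains an induced copy of $P$; (iii) some pair of ground-set elements is contained in exactly the same members of $\cF$, i.e., $\cA(\cF)$ has a non-singleton atom. Given (i)--(iii), Proposition \ref{O1G} and Corollary \ref{cor:const} together yield the stated upper bound.

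Since all the posets in the statement are either disjoint unions of chains or very small posets ($C_2 + 2C_1$, $C_3 + C_1$), the $P$-freeness test in step (i) and the detection of the new copy of $P$ in step (ii) can be substantially sped up using the observation made in the paragraph preceding the proposition: for a disjoint union of chains, it suffices to search for an embedding whose image consists of cover relations in the Hasse diagram of $\cF$, because any embedding into $\cF$ can be retracted to such a one by replacing each image by the appropriate nearest neighbour in its chain. Maintaining the Hasse diagram of $\cF_i$ incrementally as the greedy process proceeds, the work per step becomes polynomial in $|\cF_i|$ rather than exponential in $m$, which is what makes the runs for $2C_4 + \text{anything}$, $3C_4$, etc., feasible.

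The main obstacle is simply computational: one has to choose $m$ large enough that the non-separation condition (iii) actually kicks in — for some posets the greedy colex process on small $m$ produces a separating family and only stabilises into a non-separating one for slightly larger $m$ — yet small enough that the search remains tractable. The Hasse-diagram reduction together with the symmetry noted earlier (if $F$ with $\max F = m$ extends to a copy of $P$ together with certain sets of $\cF_{2^{m-1}}$, then so does $(F \setminus \{m\}) \cup \{m'\}$ for any $m < m' \le n$) makes this search linear in $n$ for the relevant stage, so the entire verification fits comfortably within a computer-assisted proof; no new combinatorial idea beyond Corollary \ref{cor:const} and Algorithm \ref{algo} is required.
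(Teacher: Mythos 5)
Your proposal is correct and follows essentially the same route as the paper: the bounds are obtained by running the greedy colex process on a small ground set $[m]$ until the output family fails to separate two elements, then applying Corollary~\ref{cor:const} to transfer the bound $|\cF|$ to $\sats(n,P)$ for all $n\ge m$, with the same computational speedups (the colex symmetry for sets with largest element $m$ and the Hasse-diagram search for disjoint unions of chains) and with the explicit witness families recorded in the arXiv appendix.
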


We also had many posets for which numerical evidence suggested certain upper bounds, that could be converted to theorems, just like in the case of Theorem \ref{butterfly}.
Most of the data supporting these bounds can be found among the source files of this paper on arXiv \cite{arxiv}.
These bounds might not always be the right magnitude, e.g., for $2C_3$ the greedy colex process gives a quadratic bound but we can prove a linear upper bound.

\begin{proposition}
	For any positive integer $n$ we have $\sats(n,2C_3)\le 3n-1$.
\end{proposition}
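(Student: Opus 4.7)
My plan is to exhibit an explicit $2C_3$-saturating family of size $3n-1$. Writing $A_i := \{1, 2, \ldots, i\}$ for $0 \le i \le n$, set
$$\cF := \{A_i : 0 \le i \le n\} \cup \{\{j\} : j \in [n]\} \cup \{[n]\setminus\{j\} : j \in [n]\},$$
i.e., the principal chain together with all singletons and all co-singletons. For $n\ge 3$ the three classes overlap only in $A_1 = \{1\}$ and $A_{n-1} = [n]\setminus\{n\}$, so $|\cF| = (n+1)+(n-1)+(n-1) = 3n-1$; this refines the construction behind $\sats(n, 2C_2)\le 2n$ from Proposition~\ref{2C2UB} by adding one more layer.

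To show $\cF$ is induced $2C_3$-free, I would first observe that $\emptyset$ and $[n]$ are comparable to every set, so neither can appear in an induced $2C_3$. For any 3-chain $T_1 \subsetneq T_2 \subsetneq T_3$ in $\cF \setminus \{\emptyset, [n]\}$, a short case check rules out $T_2$ being a singleton (only $\emptyset$ lies strictly below a singleton in $\cF$) and rules out $T_2$ being a co-singleton (only $[n]$ lies strictly above a co-singleton in $\cF$), forcing $T_2 = A_j$ for some $1 \le j \le n-1$. Hence the middle sets of any two such 3-chains lie on the principal chain and are comparable, blocking any $2C_3$.

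For saturation, given $S \in 2^{[n]}\setminus\cF$, I would set $x := \max S$ and $y := \min([n]\setminus S)$; since $S$ is not any initial segment $A_k$, one has $\max S > |S|$, which forces $y < x$. For any $i \in [y, x-1]$ I would form the two 3-chains
$$C_1 := (\{x\} \subsetneq S \subsetneq [n]\setminus\{y\}), \qquad C_2 := (\{y\} \subsetneq A_i \subsetneq [n]\setminus\{x\}),$$
with $C_1 \subseteq \cF \cup \{S\}$ and $C_2 \subseteq \cF$. The key observation is that every set of $C_1$ contains $x$ and avoids $y$, while every set of $C_2$ contains $y$ and avoids $x$; hence any cross-pair is incomparable, and the configuration is an induced $2C_3$ provided the six sets are distinct. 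The main technical obstacle is verifying this distinctness in the boundary case $x - y = 1$, where $i = y$ is forced and collisions such as $\{y\} = A_y$ (if $y = 1$) or $A_y = [n]\setminus\{y+1\}$ (if $y = n-1$) threaten the argument; one rescues this by noting that $x - y = 1$ forces $S = [y-1] \cup \{y+1\}$, and the assumption $S \notin \cF$ then yields $2 \le y \le n-2$, ruling out both collisions. When $x - y \ge 2$ at least two indices $i$ are admissible and a trivial choice avoids any identification.
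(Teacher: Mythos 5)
Your proposal is correct and matches the paper's argument: it uses the very same family (a full chain together with all singletons and co-singletons) and the same saturation witness, namely two 3-chains of the form $\{a\}\subsetneq \cdot \subsetneq [n]\setminus\{b\}$ made incomparable by two ground elements, one chain through the new set and one through an initial segment. The only differences are bookkeeping ones (you take $x=\max S$, $y=\min([n]\setminus S)$ and a suitable $A_i$, while the paper takes $[s]$ with $s=|G|$ and $x\in[s]\setminus G$, $y\in G\setminus[s]$), so this is essentially the paper's proof.
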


\begin{proof}
	For $n\le 3$, this is true because $2^n\le 3n-1$.
	For $n\ge 3$, consider a family that contains a full chain, all singletons and all co-singletons, e.g.,
	\[
	\cF_n:=\{\emptyset\} \cup \{[j]:1\le j\le n\} \cup  \{ \{j\}:1\le j\le n\} \cup  \{[n]\setminus \{j\}:1\le j\le n\}.
	\]
	We claim that $\cF_n\subseteq 2^{[n]}$ is $2C_3$-saturating. Indeed, if $G\notin \cF_n$, then for $s=|G|$ we have  $2\le s\le n-2$. Then there exists $x,y\in [n]$ with $x\in [s]\setminus G$, $y\in G\setminus [s]$. Then $\{x\},[s],[n]\setminus \{y\}$ and $\{y\},G,[n]\setminus \{x\}$ are two incomparable 3-chains in $\cF \cup \{G\}$.
\end{proof}

\smallskip
We do not know how $\sats(n,2C_k)$, behaves (except for $k=2$, see Theorem \ref{2+2}); we have neither a non-constant lower bound, nor a linear upper bound.
The greedy colex process gives a cubic bound for $2C_4$.
Even worse, for some posets it gives just an exponential bound (for $n\le 10$); one such example is $\Diamond'$, the poset on $5$ elements given by $A<B<C$ and $A<B'<C,C'$, obtainable from the diamond poset, $\Diamond$, with the addition of one more element. 
We can, however, also prove a linear upper bound for $\Diamond'$.

\begin{proposition}
	For any positive integer, we have $\sats(n,\Diamond')\le 2n$.
\end{proposition}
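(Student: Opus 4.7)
The plan is to exhibit an explicit $2n$-element $\Diamond'$-saturating family, namely the union of a full chain with all singletons:
\[
\cF_n := \{[k] : 0 \le k \le n\} \cup \{\{i\} : 1 \le i \le n\},
\]
where $[0]=\emptyset$. Since $[1]=\{1\}$ is the only element belonging to both parts, $|\cF_n|=2n$. The only structural fact about $\cF_n$ I will need is that every set in $\cF_n$ with at least two elements is a chain element, and any two chain elements are comparable.

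To show that $\cF_n$ is induced $\Diamond'$-free, I would case-split on the bottom element $A$ of a putative induced copy of $\Diamond'$. If $A\neq\emptyset$, then every set properly containing $A$ has at least two elements and is therefore a chain element, so $B$ and $B'$ are both chain elements, contradicting $B\parallel B'$. If $A=\emptyset$, then $C$ and $C'$ strictly contain the non-empty sets $B$ and $B'$ respectively, so both $C$ and $C'$ have at least two elements and are chain elements, contradicting $C\parallel C'$.

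For saturation, let $G\in 2^{[n]}\setminus \cF_n$; then $2\le|G|\le n-1$ and $G\neq[|G|]$, so $i_0:=\min([n]\setminus G)$ satisfies $i_0\le|G|<\max G$. I would pick any $j\in G$ with $j<\max G$ (possible since $|G|\ge 2$), set $k:=\max(i_0,j)$, and note $2\le k<\max G$: the upper bound is immediate, and the lower bound follows from splitting on $1\in G$ (then $i_0\ge 2$) versus $1\notin G$ (then $j\ge 2$). I would then propose the induced copy
\[
A=\emptyset,\quad B=\{i_0\},\quad B'=\{j\},\quad C=[k],\quad C'=G,
\]
and verify the ten pairwise relations. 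The containments $A\subsetneq B,B'\subsetneq C$ and $B'\subsetneq C'$ follow from $i_0,j\le k$, $k\ge 2$, and $j\in G$; the incomparability $C\parallel C'$ is witnessed by $i_0\in C\setminus C'$ and $\max G\in C'\setminus C$; and $B\parallel C'$ is witnessed by $i_0\notin G$ together with $|G|\ge 2$.

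The main obstacle is purely bookkeeping rather than a new idea: one must check all ten pairwise relations to confirm the copy is induced, paying attention to strictness (the bound $k\ge 2$ is what ensures $\{i_0\}\subsetneq[k]$ is proper) and supplying witnesses in both directions for each claimed incomparability. No new ideas beyond the chain-plus-singletons construction appear to be needed.
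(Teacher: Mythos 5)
Your proposal is correct and matches the paper's proof in all essentials: the same family (a full chain together with all singletons, of size $2n$), the same role assignment $A=\emptyset$, two singletons as the middle elements, a chain element $[k]$ as $C$, and the new set $G$ as $C'$; your choice $i_0=\min([n]\setminus G)$, $j\in G$ with $j<\max G$, $k=\max(i_0,j)$ just unifies the paper's two cases ($1\in G$ versus $1\notin G$) into one formula, and all the claimed comparabilities and incomparabilities check out. The only cosmetic difference is that you spell out the $\Diamond'$-freeness of the family, which the paper leaves implicit.
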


\begin{proof}
	For $n\le 2$, this is true because $2^n\le 2n$.
	For $n\ge 3$, consider a family that contains a full chain, and all singletons, e.g.,
	\[
	\cF_n:=\{\emptyset\} \cup \{[j]:1\le j\le n\} \cup  \{ \{j\}:1\le j\le n\}.
	\]
	
	We claim that $\cF_n$ is $\Diamond'$-saturating.
	Indeed, if $G \notin \cF_n$, then it can be written as $G=[i] \cup G'$ with $i+1 \notin G'\ne \emptyset$. Then $\emptyset, \{1\},\{i+1\},[i+1],G$ form a copy of $\Diamond'$ provided $1\le i$.
	In case $i=0$ and thus $1 \notin G$, then $G=G'$ and $|G'|\ge 2$, so for the smallest element of $G$ we have $1<m$ and $\emptyset,\{1\},\{m\},[m],G$ form a copy of $\Diamond'$.
\end{proof}

Another interesting case seems to be $\sats(n,2C_k+2C_{k-1})$, for which the greedy colex gave a linear upper bound for $k\le 4$; is it always unbounded?
It might be easier to show that $\sats(n,2C_k+C_1)$ is bounded for every $k\ge 2$.

\subsection*{Concluding remarks}

\begin{question}
	Is it decidable for a poset $P$ whether $\sats(n,P)$ is bounded or not?
\end{question}

This problem is obviously recursively enumerable, but we could find no witness for unboundedness.
Note that the size of the witness for boundedness can be exponential in $|P|$.
Could it be even larger?

Another question is whether the greedy colex process can always verify boundedness if it runs long enough.

\begin{question}
	If $\sats(n,P)$ is bounded, does the greedy colex process find a bounded family? If yes, after which $n$ will $\cF_{2^{n-1}}$ remain unchanged?
\end{question}

We also do not know how fast $\sats(n,P)$ can grow as a function of $n$.

\begin{question}
	Is $\sats(n,P)=O(n)$ for every poset $P$?
\end{question}

\subsubsection*{Acknowledgements}

We would like to thank our anonymous referees for the careful reading of the paper and for spotting several inaccuracies in the first version of this paper.

The research of Keszegh and P\'alv\"olgyi was supported by the Lend\"ulet program of the Hungarian Academy of Sciences (MTA), under grant number LP2017-19/2017.
The research of Keszegh was also supported by the National Research, Development and Innovation Office -- NKFIH under the grant K 132696. Martin's research was partially supported by Simons Foundation Collaboration Grant  \#353292 and by the J. William Fulbright Educational Exchange Program. The research of Patk\'os was supported partially by the grant of Russian Government N 075-15-2019-1926 and by the National Research, Development and Innovation Office - NKFIH under the grants FK 132060 and SNN 129364.

\appendix


\section{Output of the colex algorithm}\label{app:outputs}

Here we give the output of the colex algorithm for various posets, verifying the upper bounds of Proposition \ref{prop:const}.
Note that by Proposition \ref{O1G}, it is enough to find a non-separating family for any $n$; in our examples always the two largest elements are not separated.

\subsection{Chains of size at most $2$}

\noindent $\sats(n,C_2+2C_1)\le 8$.
\begin{spverbatim}
	A saturating family of size 8 is: [{}, {1, 2, 3, 4}, {1}, {2, 3, 4}, {2}, {1, 3, 4}, {1, 2}, {3, 4}]
\end{spverbatim}

\bigskip\noindent $\sats(n,2C_2+C_1)\le 12$.
\begin{spverbatim}
	A saturating family of size 12 is: [{}, {1, 2, 3, 4, 5}, {1}, {2, 3, 4, 5}, {2}, {1, 3, 4, 5}, {1, 2}, {3, 4, 5}, {3}, {1, 2, 4, 5}, {1, 2, 3}, {4, 5}]
\end{spverbatim}

\bigskip\noindent $\sats(n,3C_2)\le 14$.
\begin{spverbatim}
	A saturating family of size 14 is: [{}, {1, 2, 3, 4, 5}, {1}, {2, 3, 4, 5}, {2}, {1, 3, 4, 5}, {1, 2}, {3, 4, 5}, {3}, {1, 2, 4, 5}, {1, 3}, {2, 4, 5}, {1, 2, 3}, {4, 5}]
\end{spverbatim}

\bigskip\noindent $\sats(n,5C_2)\le 42$.
\begin{spverbatim}
	A saturating family of size 42 is: [{}, {1, 2, 3, 4, 5, 6, 7, 8}, {1}, {2, 3, 4, 5, 6, 7, 8}, {2}, {1, 3, 4, 5, 6, 7, 8}, {1, 2}, {3, 4, 5, 6, 7, 8}, {3}, {1, 2, 4, 5, 6, 7, 8}, {1, 3}, {2, 4, 5, 6, 7, 8}, {2, 3}, {1, 4, 5, 6, 7, 8}, {1, 2, 3}, {8, 4, 5, 6, 7}, {4}, {1, 2, 3, 5, 6, 7, 8}, {1, 4}, {2, 3, 5, 6, 7, 8}, {2, 4}, {1, 3, 5, 6, 7, 8}, {1, 2, 4}, {8, 3, 5, 6, 7}, {1, 3, 4}, {8, 2, 5, 6, 7}, {1, 2, 3, 4}, {8, 5, 6, 7}, {5}, {1, 2, 3, 4, 6, 7, 8}, {3, 5}, {1, 2, 4, 6, 7, 8}, {1, 2, 3, 5}, {8, 4, 6, 7}, {4, 5}, {1, 2, 3, 6, 7, 8}, {1, 2, 3, 4, 5}, {8, 6, 7}, {6}, {1, 2, 3, 4, 5, 7, 8}, {4, 6}, {1, 2, 3, 5, 7, 8}]
\end{spverbatim}

\bigskip\noindent $\sats(n,7C_2)\le 60$.
\begin{spverbatim}
	A saturating family of size 60 is: [{}, {1, 2, 3, 4, 5, 6, 7, 8, 9}, {1}, {2, 3, 4, 5, 6, 7, 8, 9}, {2}, {1, 3, 4, 5, 6, 7, 8, 9}, {1, 2}, {3, 4, 5, 6, 7, 8, 9}, {3}, {1, 2, 4, 5, 6, 7, 8, 9}, {1, 3}, {2, 4, 5, 6, 7, 8, 9}, {2, 3}, {1, 4, 5, 6, 7, 8, 9}, {1, 2, 3}, {4, 5, 6, 7, 8, 9}, {4}, {1, 2, 3, 5, 6, 7, 8, 9}, {1, 4}, {2, 3, 5, 6, 7, 8, 9}, {2, 4}, {1, 3, 5, 6, 7, 8, 9}, {1, 2, 4}, {3, 5, 6, 7, 8, 9}, {3, 4}, {1, 2, 5, 6, 7, 8, 9}, {1, 3, 4}, {2, 5, 6, 7, 8, 9}, {2, 3, 4}, {1, 5, 6, 7, 8, 9}, {1, 2, 3, 4}, {8, 9, 5, 6, 7}, {5}, {1, 2, 3, 4, 6, 7, 8, 9}, {1, 5}, {2, 3, 4, 6, 7, 8, 9}, {2, 5}, {1, 3, 4, 6, 7, 8, 9}, {1, 2, 5}, {3, 4, 6, 7, 8, 9}, {3, 5}, {1, 2, 4, 6, 7, 8, 9}, {1, 3, 5}, {2, 4, 6, 7, 8, 9}, {2, 4, 5}, {1, 3, 6, 7, 8, 9}, {1, 2, 3, 4, 5}, {8, 9, 6, 7}, {6}, {1, 2, 3, 4, 5, 7, 8, 9}, {1, 2, 4, 6}, {8, 9, 3, 5, 7}, {5, 6}, {1, 2, 3, 4, 7, 8, 9}, {1, 2, 3, 4, 5, 6}, {8, 9, 7}, {1, 2, 3, 4, 5, 7}, {8, 9, 6}, {6, 7}, {1, 2, 3, 4, 5, 8, 9}]
\end{spverbatim}

\begin{rem}	
	All of the above families can be described as follows:
	\begin{itemize}
		\item $C_2+2C_1$: $\emptyset$, 1, 2, 12 and their complements
		
		\item $2C_2+C_1$: $\emptyset$, 1, 2, 12, 3, 123 and their complements
		
		\item $3C_2$: $\emptyset$, 1, 2, 3, 12, 13, 123 and their complements
		
		\item $5C_2$: $\emptyset$, 1, 2, 3, 4, 5, 6, 12, 13, 23, 14, 24, 35, 45, 46, 123, 124, 134, 1234, 1235, 12345 and their complements
		
		\item $7C_2$: $\emptyset$, 1, 2, 3, 4, 5, 6, 12, 13, 23, 14, 24, 34, 15, 25, 35, 56, 67, 123, 124, 134, 234, 125, 135, 245, 1234, 1246, 12345, 123456, 123457 and their complements
	\end{itemize}
\end{rem}

\subsection{Chains of size at most $3$}

\noindent $\sats(n,C_3+C_1)\le 8$.
\begin{spverbatim}
	A saturating family of size 8 is: [{}, {1, 2, 3, 4}, {1}, {2, 3, 4}, {2}, {1, 3, 4}, {1, 2}, {3, 4}]
\end{spverbatim}

\bigskip\noindent $\sats(n,2C_3+C_1)\le 28$.
\begin{spverbatim}
	A saturating family of size 28 is: [{}, {1, 2, 3, 4, 5, 6, 7}, {1}, {2, 3, 4, 5, 6, 7}, {2}, {1, 3, 4, 5, 6, 7}, {1, 2}, {3, 4, 5, 6, 7}, {3}, {1, 2, 4, 5, 6, 7}, {1, 3}, {2, 4, 5, 6, 7}, {2, 3}, {1, 4, 5, 6, 7}, {1, 2, 3}, {4, 5, 6, 7}, {4}, {1, 2, 3, 5, 6, 7}, {1, 2, 3, 4}, {5, 6, 7}, {5}, {1, 2, 3, 4, 6, 7}, {1, 2, 3, 5}, {4, 6, 7}, {4, 5}, {1, 2, 3, 6, 7}, {1, 2, 3, 4, 5}, {6, 7}]
\end{spverbatim}

\bigskip\noindent $\sats(n,2C_3+2C_1)\le 26$.
\begin{spverbatim}
	A saturating family of size 26 is: [{}, {1, 2, 3, 4, 5, 6}, {1}, {2, 3, 4, 5, 6}, {2}, {1, 3, 4, 5, 6}, {1, 2}, {3, 4, 5, 6}, {3}, {1, 2, 4, 5, 6}, {1, 3}, {2, 4, 5, 6}, {2, 3}, {1, 4, 5, 6}, {1, 2, 3}, {4, 5, 6}, {4}, {1, 2, 3, 5, 6}, {1, 4}, {2, 3, 5, 6}, {2, 4}, {1, 3, 5, 6}, {1, 2, 4}, {3, 5, 6}, {1, 2, 3, 4}, {5, 6}]
\end{spverbatim}

\bigskip\noindent $\sats(n,2C_3+C_2)\le 20$.
\begin{spverbatim}
	A saturating family of size 20 is: [{}, {1, 2, 3, 4, 5, 6}, {1}, {2, 3, 4, 5, 6}, {2}, {1, 3, 4, 5, 6}, {1, 2}, {3, 4, 5, 6}, {3}, {1, 2, 4, 5, 6}, {1, 3}, {2, 4, 5, 6}, {2, 3}, {1, 4, 5, 6}, {1, 2, 3}, {4, 5, 6}, {4}, {1, 2, 3, 5, 6}, {1, 2, 4}, {3, 5, 6}]
\end{spverbatim}

\bigskip\noindent $\sats(n,3C_3)\le 28$.
\begin{spverbatim}
	A saturating family of size 28 is: [{}, {1, 2, 3, 4, 5, 6, 7}, {1}, {2, 3, 4, 5, 6, 7}, {2}, {1, 3, 4, 5, 6, 7}, {1, 2}, {3, 4, 5, 6, 7}, {3}, {1, 2, 4, 5, 6, 7}, {1, 3}, {2, 4, 5, 6, 7}, {2, 3}, {1, 4, 5, 6, 7}, {1, 2, 3}, {4, 5, 6, 7}, {4}, {1, 2, 3, 5, 6, 7}, {1, 4}, {2, 3, 5, 6, 7}, {1, 2, 4}, {3, 5, 6, 7}, {2, 3, 4}, {1, 5, 6, 7}, {2, 3, 5}, {1, 4, 6, 7}, {1, 4, 5}, {2, 3, 6, 7}]
\end{spverbatim}

\subsection{Chains of size at most $4$}

\noindent $\sats(n,2C_4+C_1)\le 60$.
\begin{spverbatim}
	A saturating family of size 60 is: [{}, {1, 2, 3, 4, 5, 6, 7, 8, 9}, {1}, {2, 3, 4, 5, 6, 7, 8, 9}, {2}, {1, 3, 4, 5, 6, 7, 8, 9}, {1, 2}, {3, 4, 5, 6, 7, 8, 9}, {3}, {1, 2, 4, 5, 6, 7, 8, 9}, {1, 3}, {2, 4, 5, 6, 7, 8, 9}, {2, 3}, {1, 4, 5, 6, 7, 8, 9}, {1, 2, 3}, {4, 5, 6, 7, 8, 9}, {4}, {1, 2, 3, 5, 6, 7, 8, 9}, {1, 4}, {2, 3, 5, 6, 7, 8, 9}, {2, 4}, {1, 3, 5, 6, 7, 8, 9}, {1, 2, 4}, {3, 5, 6, 7, 8, 9}, {3, 4}, {1, 2, 5, 6, 7, 8, 9}, {1, 3, 4}, {2, 5, 6, 7, 8, 9}, {2, 3, 4}, {1, 5, 6, 7, 8, 9}, {1, 2, 3, 4}, {8, 9, 5, 6, 7}, {5}, {1, 2, 3, 4, 6, 7, 8, 9}, {1, 2, 3, 4, 5}, {8, 9, 6, 7}, {6}, {1, 2, 3, 4, 5, 7, 8, 9}, {1, 2, 3, 4, 6}, {8, 9, 5, 7}, {5, 6}, {1, 2, 3, 4, 7, 8, 9}, {1, 2, 3, 4, 5, 6}, {8, 9, 7}, {7}, {1, 2, 3, 4, 5, 6, 8, 9}, {1, 2, 3, 4, 7}, {8, 9, 5, 6}, {5, 7}, {1, 2, 3, 4, 6, 8, 9}, {1, 2, 3, 4, 5, 7}, {8, 9, 6}, {6, 7}, {1, 2, 3, 4, 5, 8, 9}, {1, 2, 3, 4, 6, 7}, {8, 9, 5}, {5, 6, 7}, {1, 2, 3, 4, 8, 9}, {1, 2, 3, 4, 5, 6, 7}, {8, 9}]
\end{spverbatim}

\bigskip\noindent $\sats(n,2C_4+C_2)\le 54$.
\begin{spverbatim}
	A saturating family of size 54 is: [{}, {1, 2, 3, 4, 5, 6, 7, 8, 9}, {1}, {2, 3, 4, 5, 6, 7, 8, 9}, {2}, {1, 3, 4, 5, 6, 7, 8, 9}, {1, 2}, {3, 4, 5, 6, 7, 8, 9}, {3}, {1, 2, 4, 5, 6, 7, 8, 9}, {1, 3}, {2, 4, 5, 6, 7, 8, 9}, {2, 3}, {1, 4, 5, 6, 7, 8, 9}, {1, 2, 3}, {4, 5, 6, 7, 8, 9}, {4}, {1, 2, 3, 5, 6, 7, 8, 9}, {1, 4}, {2, 3, 5, 6, 7, 8, 9}, {2, 4}, {1, 3, 5, 6, 7, 8, 9}, {1, 2, 4}, {3, 5, 6, 7, 8, 9}, {3, 4}, {1, 2, 5, 6, 7, 8, 9}, {1, 3, 4}, {2, 5, 6, 7, 8, 9}, {2, 3, 4}, {1, 5, 6, 7, 8, 9}, {1, 2, 3, 4}, {8, 9, 5, 6, 7}, {5}, {1, 2, 3, 4, 6, 7, 8, 9}, {1, 2, 5}, {1, 2, 6, 7, 8, 9}, {1, 2, 3, 4, 5}, {8, 9, 6, 7}, {6}, {1, 2, 3, 4, 5, 7, 8, 9}, {1, 2, 3, 4, 6}, {8, 9, 5, 7}, {5, 6}, {1, 2, 3, 4, 7, 8, 9}, {1, 2, 3, 4, 5, 6}, {8, 9, 7}, {1, 2, 3, 4, 7}, {8, 9, 5, 6}, {5, 7}, {1, 2, 3, 4, 6, 8, 9}, {1, 2, 3, 4, 6, 7}, {8, 9, 5}, {5, 6, 7}, {1, 2, 3, 4, 8, 9}]
\end{spverbatim}

\bigskip\noindent $\sats(n,2C_4+C_3)\le 38$.
\begin{spverbatim}
	A saturating family of size 38 is:  [{}, {1, 2, 3, 4, 5, 6, 7}, {1}, {2, 3, 4, 5, 6, 7}, {2}, {1, 3, 4, 5, 6, 7}, {1, 2}, {3, 4, 5, 6, 7}, {3}, {1, 2, 4, 5, 6, 7}, {1, 3}, {2, 4, 5, 6, 7}, {2, 3}, {1, 4, 5, 6, 7}, {1, 2, 3}, {4, 5, 6, 7}, {4}, {1, 2, 3, 5, 6, 7}, {1, 4}, {2, 3, 5, 6, 7}, {2, 4}, {1, 3, 5, 6, 7}, {1, 2, 4}, {3, 5, 6, 7}, {3, 4}, {1, 2, 5, 6, 7}, {1, 3, 4}, {2, 5, 6, 7}, {2, 3, 4}, {1, 5, 6, 7}, {1, 2, 3, 4}, {5, 6, 7}, {5}, {1, 2, 3, 4, 6, 7}, {1, 2, 5}, {1, 2, 3, 5}, {1, 2, 6, 7}, {2, 6, 7}]
\end{spverbatim}

\bigskip\noindent $\sats(n,3C_4)\le 52$.
\begin{spverbatim}
	A saturating family of size 52 is: [{}, {1, 2, 3, 4, 5, 6, 7, 8}, {1}, {2, 3, 4, 5, 6, 7, 8}, {2}, {1, 3, 4, 5, 6, 7, 8}, {1, 2}, {3, 4, 5, 6, 7, 8}, {3}, {1, 2, 4, 5, 6, 7, 8}, {1, 3}, {2, 4, 5, 6, 7, 8}, {2, 3}, {1, 4, 5, 6, 7, 8}, {1, 2, 3}, {8, 4, 5, 6, 7}, {4}, {1, 2, 3, 5, 6, 7, 8}, {1, 4}, {2, 3, 5, 6, 7, 8}, {2, 4}, {1, 3, 5, 6, 7, 8}, {1, 2, 4}, {8, 3, 5, 6, 7}, {3, 4}, {1, 2, 5, 6, 7, 8}, {1, 3, 4}, {8, 2, 5, 6, 7}, {2, 3, 4}, {8, 1, 5, 6, 7}, {1, 2, 3, 4}, {8, 5, 6, 7}, {5}, {1, 2, 3, 4, 6, 7, 8}, {1, 5}, {2, 3, 4, 6, 7, 8}, {1, 2, 5}, {8, 3, 4, 6, 7}, {1, 2, 3, 5}, {8, 4, 6, 7}, {3, 4, 5}, {8, 1, 2, 6, 7}, {2, 3, 4, 5}, {8, 1, 6, 7}, {3, 4, 6}, {8, 1, 2, 5, 7}, {2, 3, 4, 6}, {8, 1, 5, 7}, {1, 5, 6}, {8, 2, 3, 4, 7}, {1, 2, 5, 6}, {8, 3, 4, 7}]
\end{spverbatim}

\bigskip\noindent $\sats(n,2C4+2C_2)\le 46$.
\begin{spverbatim}
	A saturating family of size 46 is:  [{}, {1, 2, 3, 4, 5, 6, 7}, {1}, {2, 3, 4, 5, 6, 7}, {2}, {1, 3, 4, 5, 6, 7}, {1, 2}, {3, 4, 5, 6, 7}, {3}, {1, 2, 4, 5, 6, 7}, {1, 3}, {2, 4, 5, 6, 7}, {2, 3}, {1, 4, 5, 6, 7}, {1, 2, 3}, {4, 5, 6, 7}, {4}, {1, 2, 3, 5, 6, 7}, {1, 4}, {2, 3, 5, 6, 7}, {2, 4}, {1, 3, 5, 6, 7}, {1, 2, 4}, {3, 5, 6, 7}, {3, 4}, {1, 2, 5, 6, 7}, {1, 3, 4}, {2, 5, 6, 7}, {2, 3, 4}, {1, 5, 6, 7}, {1, 2, 3, 4}, {5, 6, 7}, {5}, {1, 2, 3, 4, 6, 7}, {1, 5}, {2, 3, 4, 6, 7}, {2, 5}, {1, 3, 4, 6, 7}, {1, 2, 5}, {3, 4, 6, 7}, {1, 2, 3, 5}, {4, 6, 7}, {1, 2, 4, 5}, {3, 6, 7}, {1, 2, 3, 4, 5}, {6, 7}]
\end{spverbatim}

\bigskip\noindent $\sats(n,2C_4+2C_1)\le 68$.
\begin{spverbatim}
	A saturating family of size 68 is: [{}, {1, 2, 3, 4, 5, 6, 7, 8, 9, 10}, {1}, {2, 3, 4, 5, 6, 7, 8, 9, 10}, {2}, {1, 3, 4, 5, 6, 7, 8, 9, 10}, {1, 2}, {3, 4, 5, 6, 7, 8, 9, 10}, {3}, {1, 2, 4, 5, 6, 7, 8, 9, 10}, {1, 3}, {2, 4, 5, 6, 7, 8, 9, 10}, {2, 3}, {1, 4, 5, 6, 7, 8, 9, 10}, {1, 2, 3}, {4, 5, 6, 7, 8, 9, 10}, {4}, {1, 2, 3, 5, 6, 7, 8, 9, 10}, {1, 4}, {2, 3, 5, 6, 7, 8, 9, 10}, {2, 4}, {1, 3, 5, 6, 7, 8, 9, 10}, {1, 2, 4}, {3, 5, 6, 7, 8, 9, 10}, {3, 4}, {1, 2, 5, 6, 7, 8, 9, 10}, {1, 3, 4}, {2, 5, 6, 7, 8, 9, 10}, {2, 3, 4}, {1, 5, 6, 7, 8, 9, 10}, {1, 2, 3, 4}, {5, 6, 7, 8, 9, 10}, {5}, {1, 2, 3, 4, 6, 7, 8, 9, 10}, {1, 2, 5}, {3, 4, 6, 7, 8, 9, 10}, {3, 4, 5}, {1, 2, 6, 7, 8, 9, 10}, {1, 2, 3, 4, 5}, {8, 9, 10, 6, 7}, {6}, {1, 2, 3, 4, 5, 7, 8, 9, 10}, {1, 2, 3, 4, 5, 6}, {8, 9, 10, 7}, {7}, {1, 2, 3, 4, 5, 6, 8, 9, 10}, {1, 2, 3, 4, 5, 7}, {8, 9, 10, 6}, {6, 7}, {1, 2, 3, 4, 5, 8, 9, 10}, {1, 2, 3, 4, 5, 6, 7}, {8, 9, 10}, {8}, {1, 2, 3, 4, 5, 6, 7, 9, 10}, {1, 2, 3, 4, 5, 8}, {9, 10, 6, 7}, {8, 6}, {1, 2, 3, 4, 5, 7, 9, 10}, {1, 2, 3, 4, 5, 6, 8}, {9, 10, 7}, {8, 7}, {1, 2, 3, 4, 5, 6, 9, 10}, {1, 2, 3, 4, 5, 7, 8}, {9, 10, 6}, {8, 6, 7}, {1, 2, 3, 4, 5, 9, 10}, {1, 2, 3, 4, 5, 6, 7, 8}, {9, 10}]
\end{spverbatim}

\end{document}